\newcommand{\BR}{\mathbb{R}}
\newcommand{\BC}{\mathbb{C}}
\newcommand{\BZ}{\mathbb{Z}}
\DeclareMathOperator{\re}{\mathbb{R}e}
\DeclareMathOperator{\im}{\mathbb{I}m}
\newcommand{\lnorm}[1]{ \left\| #1 \right\|}
\newcommand{\spr}[2]{\langle #1,#2 \rangle}
\renewcommand{\i}{\mathrm i}
\newcommand{\INF}{{\infty}}
\newcommand{\tta}{\theta}
\newcommand{\OM}{\Omega}
\newcommand{\sph}{{{\mathbb S}^ 1}}
\newcommand{\del}{\partial}
\newcommand{\Gam}{\varGamma}
\newcommand{\ol}{\overline}
\newcommand{\ds}{\displaystyle}
\newcommand{\dba}{\overline{\partial}} 
\newcommand{\fii}{{\varphi}}
\newcommand{\bu}{{\bf u}}
\newcommand{\bv}{{\bf v}}
\newcommand{\bg}{{\bf g}}
\newcommand{\bbf}{{\bf f}}
\newcommand{\bh}{{\bf h}}
\newcommand{\bF}{{\bf F}}
\newcommand{\B}{\mathcal{B}}
\newcommand{\HT}{\mathcal{H}}
\newcommand{\bzero}{\mathbf 0}
\newcommand{\btheta}{\boldsymbol \theta}
\newcommand{\balpha}{\boldsymbol \alpha}
\newcommand{\bbeta}{\boldsymbol \beta}
\newcommand{\jpj}{\langle j\rangle}
\newtheorem{theorem}{Theorem}[section]
\newtheorem{prop}{Proposition}[section]
\newtheorem{lemma}{Lemma}[section]
\newtheorem{remark}{Remark}[section]
\title[On the $X$-ray transform of symmetric higher order tensors]{On the $X$-ray transform of planar symmetric tensors}
\begin{document}
	\date{\today}
	\author{David Omogbhe}
	\address{Faculty of Mathematics, Computational Science Center,  University of Vienna, Oskar-Morgenstern-Platz 1, 1090 Vienna, Austria}
	\email{david.omogbhe@univie.ac.at}
	
	\author{Kamran Sadiq}
	\address{Johann Radon Institute for Computational and Applied Mathematics (RICAM), Altenbergerstrasse 69, 4040 Linz, Austria}
	\email{kamran.sadiq@ricam.oeaw.ac.at}
	\subjclass[2000]{Primary 30E20; Secondary 35J56}
	\keywords{$X$-ray transform of symmetric tensors, Attenuated $X$-ray transform, $A$-analytic maps, Hilbert transform}
	\maketitle
	\begin{abstract}
		In this article we characterize the range of the attenuated and non-attenuated $X$-ray transform of compactly supported symmetric tensor fields in the Euclidean plane. 
		The characterization is in terms of a Hilbert-transform associated with $A$-analytic maps in the sense of Bukhgeim. 
	\end{abstract}
	
	\section{Introduction} \label{sec:intro}
	We consider here the problem  of the range characterization of (non)-attenuated $X$-ray transform of a real valued symmetric $m$-tensors in a strictly convex bounded domain in the Euclidean plane.
	As the $X$-ray and Radon transform \cite{radon1917} for planar functions (0-tensors)  differ merely by the way lines are parameterized, the $m=0$ case is the classical Radon transform \cite{radon1917}, for which  the range characterization has been long established independently by Gelfand and Graev \cite{gelfandGraev}, Helgason  \cite{helgason65}, and Ludwig \cite{ludwig}. 
	Models in the presence of  attenuation have also been considered in the homogeneous case \cite{kuchmentLvin,aguilarEhrenpreisKuchment}, and in the non-homogeneous case in the breakthrough works \cite{ABK, novikov01,novikov02}, and subsequently \cite{natterer01, bomanStromberg, bal04, kazantsevBukhgeimJr04, monard17}. 
	The references here are by no means exhaustive.
	
	The  interest in the range characterization problem in the $0$-tensors case stems out from their applications to data enhancement  in medical imaging methods such as Single Photon Emission Computed Tomography or Positron Emission Computed Tomography  \cite{nattererBook,finch}. 
	The $X$-ray transform of $1$-tensors  (Doppler transform \cite{nattererWubbeling, sharafutdinov_book94}) appears in the investigation of velocity distribution in a flow \cite{braunHauk},
	in ultrasound tomography \cite{sparSLP95,schuster08}, and also in non-invasive industrial measurements for reconstructing the velocity of a moving fluid \cite{norton88,norton}.  The $X$-ray transform of second order tensors arises as the linearization of the boundary rigidity problem \cite{sharafutdinov_book94}.
	The case of tensor fields of rank four describes the perturbation of travel times of compressional waves propagating in slightly anisotropic elastic media \cite[Chapters 6,7]{sharafutdinov_book94}. Thus, due to the various applications the range characterization problem has been a continuing subject of research.
	
	Unlike the scalar case, the $X$-ray transform of tensor fields has a non-zero kernel, and the null-space becomes larger as the order of the tensor field increases.  For tensors of order $m\geq 1$, it is easy to check that injectivity can hold only in some restricted class: e.g., the class of solenoidal tensors, and it is possible to reconstruct uniquely (without additional information of moment ray transforms \cite{sharafutdinov_book94}) only the solenoidal part of a tensor field.  The non-injectivity of the $X$-ray transform makes  the range characterization problem even more interesting.
	
	For the attenuating  media in planar domains,  interesting enough, the $1$-tensor field can be recovered in the regions of positive absorption as shown in \cite{kazantsevBukhgeimJr06, bal04,tamasan07,sadiqTamasan02}, without using some additional data information \cite{sharafutdinov86,derevtsovPickalov11,mishra}. It is due to a surprising fact that the two-dimensional attenuated Doppler transform with positive attenuation is injective while the non-attenuated Doppler transform is not.
	
	The systematic study of tensor tomography in non-Euclidean spaces originated in \cite{sharafutdinov_book94}. On simple Riemannian surfaces, the range characterization of the geodesic $X$-ray of compactly supported $0$ and $1$-tensors has been established in terms of the scattering relation in \cite{pestovUhlmann04}, and the results were extended in \cite{AMU, denisiuk,venke20} to symmetric tensors of arbitrary order. Explicit inversion approaches in the Euclidean case have been proposed in \cite{kazantsevBukhgeimJr04,derevtsovSvetov15,monard16}. 
	In the attenuating media, tensor tomography was solved for the cases $m = 0, 1$ in \cite{saloUhlmann11}.  Inversion for the attenuated $X$-ray transform for solenoidal tensors of rank two and higher can be   found in \cite{paternainSaloUhlmann13}, with a range characterization in \cite{paternainSaloUhlmann14,monard17,AMU}.

	The original characterization in \cite{gelfandGraev, helgason65, ludwig} was extended to arbitrary symmetric $m$-tensors in \cite{pantyukhina}; see \cite{derevtsovSvetov15} for a partial survey on the tensor tomography in the Euclidean plane. 
	The connection between the Euclidean version of the characterization in \cite{pestovUhlmann04} and the characterization in \cite{gelfandGraev, helgason65, ludwig} was established in \cite{monard16}. Recently, in \cite{sadiqTamasan22} the connection between the range characterization result in \cite{sadiqTamasan01} and the original range characterization in \cite{gelfandGraev, helgason65, ludwig} has been established.


	In here we build on the results in \cite{sadiqTamasan01,sadiqTamasan02, sadiqScherzerTamasan}, and extends them to symmetric tensor fields of any arbitrary order.  In particular, the range characterization therein are given in terms of the Bukhgeim-Hilbert transform \cite{sadiqTamasan01} (the Hilbert-like transform associated with $A$-analytic maps in the sense of Bukhgeim \cite{bukhgeimBook}). 
	The characterization in here can be viewed as an explicit description of the scattering relation in \cite{paternainSaloUhlmann13, paternainSaloUhlmann14} particularized to the Euclidean setting. In the sufficiency part we reconstruct all possible $m$-tensors yielding  identical $X$-ray data; see \eqref{NART_mEvenPsiClass} and \eqref{NART_mOddPsiClass} for the non-attenuated case and  \eqref{ART_mEvenPsiClass} and \eqref{ART_mOddPsiClass} for the attenuated case.

	This article is organized as follows: All the details establishing notations and basic properties of symmetric tensor fields needed here are in Section \ref{sec:prelim}. In Section \ref {sec:A-analytic} we briefly recall existing results on $A$-analytic maps that are used in the proofs. 
	In Section \ref{sec:RangeNART_mEven} and Section \ref{sec:RangeNART_mOdd}, we provide range characterization of  symmetric tensor field $\bbf$ of even order, respectively, odd order in the non-attenuated case.
	In Section \ref{sec:RangeART_mEven} and Section \ref{sec:RangeART_mOdd}, we provide range characterization of  symmetric tensor field $\bbf$ of even order, respectively, odd order in the attenuated case.
	
	
	\section{Preliminaries} \label{sec:prelim}
	Given an integer $m \geq 0$, let $\mathbf{T}^m (\BR^2)$ denote the space of all real-valued covariant tensor fields of rank $m$: 
	\begin{align}
		\bbf(x^1,x^2) = 
		f_{i_1 \cdots i_m}(x^1,x^2) dx^{i_1} \otimes dx^{i_2} \otimes \cdots  \otimes dx^{i_m}, \quad i_1, \cdots, i_m  \in\{1,2\},
	\end{align} 
	where $\otimes$ is the tensor product, $f_{i_1 \cdots i_m}$ are the components of tensor field $\bbf$ in the Cartesian basis $(x^1,x^2)$, and where by repeating superscripts and subscripts in a monomial a summation from $1$ to $2$ is meant. 
	
	We denote by $ \mathbf{S}^m(\BR^2)$ the space of symmetric covariant tensor fields of rank $m$ on $\BR^2$.
	Let $\sigma:\mathbf{T}^m(\BR^2) \to \mathbf{S}^m(\BR^2)$ be the canonical projection (symmetrization) defined by $\ds 
	(\sigma \bbf)_{i_1 \cdots i_m} = \frac{1}{m!} \sum_{\pi \in \Pi_m} f_{ i_{\pi(1)} \cdots i_{\pi(m)}}, $
	where the summation is over the group $\Pi_m$ of all permutations of the set $\{1,\cdots,m\}$.
	
	A planar covariant symmetric tensor field of rank $m$ has $m+1$ independent component, which we denote by
	\begin{align}\label{eq:cov_tensor}
		\tilde{f}_k := f_{ \underbrace{1\cdots1}_{m-k} \underbrace{2\cdots2}_{k}}, \quad (k = 0, \cdots ,m),
	\end{align}
	in connection with this, a symmetric tensor $\bbf = (f_{i_1 \cdots i_m}, \; i_1, \cdots, i_m = 1, 2 )$ of rank $m$ will be given by a pseudovector of size $m + 1$
	\begin{align*}
		\bbf = (\tilde{f}_{0}, \tilde{f}_{1},  \cdots ,\tilde{f}_{m-1}, \tilde{f}_{m}). 
	\end{align*} 
	
	We identify the plane $\BR^2$ by the complex plane $\BC$, $z^1 \equiv z = x^1+\i x^2, z^2 \equiv \bar{z} = x^1-\i x^2$. We consider the Cauchy-Riemann operators 
	\begin{align}\label{eq:ddbar}
		\frac{\del}{\del z^1} \equiv \frac{\del}{\del z}  := \frac{1}{2} \left( \frac{\del}{\del x^1}- \i \frac{\del}{\del x^2} \right), \quad   
		\frac{\del}{\del z^2} \equiv \frac{\del}{\del \bar{z}} := \frac{1}{2} \left( \frac{\del}{\del x^1}+ \i \frac{\del}{\del x^2} \right),
	\end{align}
	and the inverse relation by $\ds 
	\frac{\del}{\del x^1} =  \frac{\del}{\del z} + \frac{\del}{\del \bar{z}}, \quad \frac{\del}{\del x^2} = \i \frac{\del}{\del z} - \i \frac{\del}{\del \bar{z}}.$

	Let $\bbf = ( f_{i_1 \cdots i_m}(x^1, x^2), \; i_1, \cdots, i_m = 1, 2 )$ be real valued symmetric $m$-tensor field in Cartesian coordinates $(x^1, x^2)$, then in complex coordinates $(z^1, z^2)$ it will have
	new components $(F_{i_1 \cdots i_m}(z, \bar{z}))$, which are formally expressed by the covariant tensor law:
	\begin{equation} \label{symtensor_CartesianComplex}
		\begin{aligned}
			F_{i_1 \cdots i_m}(z, \bar{z}) &= \frac{\del x^{s_1}}{\del z^{i_1}} \cdots \frac{\del x^{s_m}}{\del z^{i_m}} f_{s_1 \cdots s_m}(x^1, x^2), \quad \text{and}
			\\ f_{i_1 \cdots i_m}(x^1, x^2) &= \frac{\del z^{s_1}}{\del x^{i_1}} \cdots \frac{\del z^{s_m}}{\del x^{i_m}} F_{s_1 \cdots s_m}(z, \bar{z}),
		\end{aligned}
	\end{equation} where the Jacobian matrix has the form 
	\begin{align*}
		J :=  
		\begin{pmatrix} \frac{\del x^1}{\del z^1} &  \frac{\del x^1}{\del z^2}\\  \frac{\del x^2}{\del z^1} &  \frac{\del x^2}{\del z^2} \end{pmatrix} 
		= \frac{1}{2}\begin{pmatrix} 1 &  1\\  -\i  & \i \end{pmatrix}, \quad \text{and} \quad
		J^{-1} =
		\begin{pmatrix} \frac{\del z^1}{\del x^1} &  \frac{\del z^1}{\del x^2}\\  \frac{\del z^2}{\del x^1} &  \frac{\del z^2}{\del x^2} \end{pmatrix} 
		= \begin{pmatrix} 1 &  \i\\  1  & -\i \end{pmatrix}.
	\end{align*}
	Adopting the notation in \cite{kazantsevBukhgeimJr04}, we shall write the transformations \eqref{symtensor_CartesianComplex} as 
	\begin{equation} \label{tensor_transformation}
		\begin{aligned}
			\bbf = \{f_{i_1 \cdots i_m}(x^1, x^2)\} \quad  &\rightarrowtail \quad  \bF = \{F_{i_1 \cdots i_m}(z, \bar{z})\} , \quad \text{and} \\
			\bF = \{F_{i_1 \cdots i_m}(z, \bar{z})\}\quad  &\rightarrowtail \quad \bbf = \{f_{i_1 \cdots i_m}(x^1, x^2)\}.
		\end{aligned}
	\end{equation}

	A symmetric tensor $\bF$ of rank $m$, obtained from the real symmetric tensor $\bbf$ by passing to complex variables, we also define a pseudovector $(F_{0}, F_{1}, \cdots, F_{m-1}, F_{m})$ with components
	\begin{align}\label{eq:symtensor}
		F_k = F_{ \underbrace{1\cdots1}_{m-k} \underbrace{2\cdots2}_{k}}, \quad k = 0, \cdots ,m,
	\end{align}
	and  subject to the conditions
	\begin{align}\label{reality_cond}
		F_{k} = \overline{F}_{m-k}, \quad k = 0, \cdots ,m.
	\end{align}
	Taking into account the tensor law \eqref{symtensor_CartesianComplex}, we obtain formulas relating the components of pseudovectors in \eqref{eq:cov_tensor} and pseudovectors in \eqref{eq:symtensor}:
	\begin{align}\label{eq:F_k}
		F_k &= \frac{(-\i)^{m-k}}{2^m} \sum_{q=0}^{m-k} \sum_{p=0}^{k} {m-k \choose q} {k \choose p} \i^{k-p+q} \tilde{f}_{p+q}, \quad k = 0,1, \cdots, m, \\ \label{eq:tilde_fk}
		\tilde{f}_k &=  \i^{k} \sum_{q=0}^{m-k} \sum_{p=0}^{k} {m-k \choose q} {k \choose p} (-1)^{k-p} F_{p+q}, \quad k = 0,1, \cdots ,m.
	\end{align}

	In Cartesian coordinates covariant and contravariant components are the same, and thus contravariant components of the tensor field $\bbf$ coincide with its corresponding covariant components, $f_{i_1 \cdots i_m} = f^{i_1 \cdots i_m}$.
	The dot product on $\mathbf{S}^m(\BR^2)$ induced by the Euclidean metric is defined by 
	\begin{align}\label{innerprod}
		\spr{\bbf}{\bh} :=
		f_{i_1 \cdots i_m} h^{i_1 \cdots i_m}. 
	\end{align}
	Note that if $ \bbf_1 \rightarrowtail \bF_1$ and $ \bbf_2 \rightarrowtail \bF_2$, then the pointwise inner product of tensors is invariant:
	\begin{align}\label{invariant_ptwise_innerprod}
		\spr{\bbf_1}{\bbf_2} = \spr{\bF_1}{\bF_2}.
	\end{align}
	
	For $\btheta = (\theta^1, \theta^2)=(\cos \theta, \sin \theta) \in \sph$, we  denote by $\btheta^m$ the tensor product $\ds \btheta^m := \underbrace{ \btheta  \otimes \btheta \otimes \cdots \otimes \btheta}_m$  and $\btheta^m$ will be an $m$-contravariant tensor in Cartesian coordinates.
	According to the tensor law for contravariant components its representation
	in complex coordinates will look like
	\begin{align*}
		\btheta \rightarrowtail \Theta, \qquad \Theta^k = \frac{\del z^{k}}{\del x^{s}} \theta^s, \qquad \Theta = ( \Theta^1,  \Theta^2) = ( e^{\i \tta},  e^{-\i \tta}),
	\end{align*}
	and $\ds  \Theta^m := \underbrace{ \Theta \otimes \Theta \otimes \cdots \otimes \Theta}_m$ 
	be an $m$-contravariant tensor, and we also have $\ds \btheta^m \rightarrowtail \Theta^m$. \\
	Using \eqref{invariant_ptwise_innerprod}, we get
	\begin{equation}\label{eq:innerprod_fThetaM}
		\begin{aligned}
			\langle \bbf, \btheta^m \rangle 
			&=  \langle \bF, \Theta^m \rangle 
			= \sum_{k=0}^{m} {m \choose k} F_{k} \; e^{\i \tta (m-k)}e^{-\i \tta k}   = \sum_{k=0}^{m} {m \choose k} \; F_{k} e^{\i (m-2k)\tta}\\
			&= 
			\begin{cases} 
				\ds \sum_{k=0}^{q}f_{-2k} e^{\i (2k)\tta}  + \sum_{k=1}^{q} f_{2k} e^{-\i (2k)\tta},   & (\text{if } m =2q,\; q \geq 0), \\
				\ds \sum_{k=0}^{q} f_{-(2k+1)} e^{\i (2k+1)\tta}+f_{2k+1} e^{-\i (2k+1)\tta},  & (\text{if } m =2q+1,\; q \geq 0),
			\end{cases}
		\end{aligned}
	\end{equation}
	where 
	\begin{align}\label{eq:Evenmodes_fk}
		f_{-2k}   &= {2q \choose q-k} \; F_{q-k}, &&  0 \leq k \leq q, \, q \geq 0, \quad  \left( q =\frac{m}{2}, m \, \text{even} \right ), \\ \label{eq:Oddmodes_fk}
		f_{-(2k+1)} &=  {2q+1 \choose q-k} \; F_{q-k}, &&  0 \leq k \leq q,\, q \geq 0, \quad  \left( q =\frac{m-1}{2}, m \, \text{odd} \right ),
	\end{align}
	and $f_{n} = \ol{f_{-n}}$ and $F_{n} = \overline{F}_{m-n},$ for $0 \leq n \leq m$.
	
	Let $\bbf$ be a real valued symmetric $m$-tensor, with integrable components of compact support in $\BR^2$,
	and $a \in L^1(\BR^2)$ a real valued function.
	The attenuated $X$-ray transform of $\bbf$ is given by 
	\begin{equation}\label{Xaf} 
		\begin{aligned}
			X_{a}\bbf(x,\btheta) &:= 
			\int_{-\infty}^{\infty} \langle \bbf(x+t\btheta), \btheta^m \rangle \exp\left\{{-\int_t^\infty a(x+s\btheta)ds}\right\}dt,
		\end{aligned}
	\end{equation}
	where $x \in \BR^2$, $\btheta \in \sph$, and $\langle\cdot,\cdot \rangle$ is the inner product in \eqref{innerprod}. For the non attenuated case $(a\equiv 0)$, we use the notation $X\bbf$.
	
	In here, we consider the tensor field $\bbf$ be defined on a strongly convex bounded set $\Omega \subset \BR^2$ with vanishing trace at the boundary $\Gamma$; further regularity and the order of vanishing will be specified in the theorems.  
	In the statements below we use the notations in \cite{sharafutdinov_book94}:
	\begin{align*}
		C^\mu(\mathbf{S}^m; \OM) =\left \{\bbf =(f_{i_1 \cdots i_m}) \in \mathbf{S}^m( \OM): f_{i_1 \cdots i_m}\in C^\mu(\OM)\right \}
	\end{align*}
	$0<\mu<1$, for the space of real valued, symmetric tensor fields of order $m$ with 
	locally H\"older continuous components.  
	Similarly, $L^1(\mathbf{S}^m; \OM)$ denotes the tensor fields of order $m$ with integrable components.
	
	
	For any $(x,\btheta)\in\ol\OM\times \sph$, let $\tau(x,\btheta)$
	be  length of the chord  passing through $x$ in the direction of $\btheta$. 
	Let  also consider the incoming $(-)$, respectively outgoing $(+)$ submanifolds of the unit bundle restricted to the boundary
	\begin{align}\label{GammaPM}
		\Gamma_\pm :=\{(x,\btheta)\in \Gamma\times \sph: \pm\btheta\cdot \nu(x)>0\},
	\end{align} and the variety 
	\begin{align}\label{GammaZero}
		\Gamma_0 :=\{(x,\btheta)\in \Gamma\times \sph: \btheta\cdot \nu(x)=0\},
	\end{align} where $\nu(x)$ denotes outer normal.
	
	The $a$-attenuated $X$-ray transform of $\bbf$ is realized as a function on $\Gam_+$ by
	\begin{align}\label{ARTmT}
		X_{a}\bbf(x,\btheta) = \int_{-\tau(x,\btheta)}^{0} \langle \bbf(x+t\btheta)\, , \btheta^m \rangle \,
		e^{-\int_t^0 a(x+s\btheta)ds} \;dt, \; (x,\btheta) \in \Gam_{+}.
	\end{align} 
	
	We approach the range characterization via the well-known connection with the transport model as follows:
	The boundary value problem
	\begin{subequations}\label{bvp_transport}
		\begin{align} \label{TransportEq2Tensor}
			&  \btheta\cdot\nabla u(x,\btheta) +a(x) u(x,\btheta) = \langle  \bbf(x) , \btheta^m \rangle,  \quad  (x,\btheta)\in \OM \times \sph, \\ \label{UGamMinus}
			&   u|_{\Gam_-}= 0 ,
		\end{align}
	\end{subequations}
	has a unique solution in $\OM \times \sph$ and
	\begin{align}\label{u_Gam+}
		u \lvert_{\Gam_{+}} (x,\btheta) = X_a \bbf (x,\btheta), \quad (x,\btheta)\in \Gam_{+}.
	\end{align}
	
	The range characterization is given in terms of the trace
	\begin{align}\label{g_trace}
		g:=u \lvert_{\Gam \times \sph} = \left \{ \begin{array}{ll}
			X_a \bbf, & \text{ on } \Gam_{+}, \\
			0 , &  \text{ on } \Gam_{-} \cup \Gam_{0}. \\
		\end{array}
		\right.
	\end{align}
	
	We note that from \eqref{eq:innerprod_fThetaM}, the expression $\langle \bbf, \btheta^m \rangle $ in the transport equation \eqref{TransportEq2Tensor} is represented in the Fourier decomposition in $\btheta$ as in terms of the following Fourier modes: 
	\begin{align*}
		\langle \bbf, \btheta^m \rangle  = \begin{cases}
			f_0 +f_{\pm 2}e^{\mp 2 \i \tta}+f_{\pm 4}e^{\mp 4 \i \tta} + \cdots +f_{\pm m}e^{\mp m \i \tta}& (m \;\text{even}), \\
			f_{\pm 1}e^{\mp \i \tta}+f_{\pm 3}e^{\mp 3 \i \tta} + \cdots +f_{\pm m}e^{\mp m \i \tta} & (m \;\text{odd}). \\
		\end{cases}
	\end{align*}
	
	\section{Ingredients from  $A$-analytic theory}\label{sec:A-analytic} 
	In this section we briefly introduce the properties of $A$-analytic maps needed later.\\
	For $0<\mu<1$, $p=1,2$, we consider the  Banach spaces:
	\begin{equation}\label{spaces}
		\begin{aligned} 
			l^{1,p}_{\INF}(\Gam) &:= \left \{ \bg= \langle g_{0}, g_{-1}, g_{-2},...\rangle\; : \lnorm{\bg}_{l^{1,p}_{\INF}(\Gam)}:= \sup_{\xi \in \Gam}\sum_{j=0}^{\INF}  \jpj^p \lvert g_{-j}(\xi) \rvert < \INF \right \},\\
			C^{\mu}(\Gam; l_1) &:= \left \{ \bg= \langle g_{0}, g_{-1}, g_{-2},...\rangle:
			\sup_{\xi\in \Gam} \lVert \bg(\xi)\rVert_{\ds l_{1}} + \underset{{\substack{
						\xi,\eta \in \Gam \\
						\xi\neq \eta } }}{\sup}
			\frac{\lVert \bg(\xi) - \bg(\eta)\rVert_{\ds l_{1}}}{|\xi - \eta|^{ \mu}} < \INF \right \}, \\
			Y_{\mu}(\Gam) &:= \left \{ \bg: \bg \in  l^{1,2}_{\INF}(\Gam) \; \text{and} \;
			\underset{{\substack{
						\xi,\eta \in \Gam \\
						\xi\neq \eta } }}{\sup} \sum_{j=0}^{\INF}  \jpj 
			\frac{\lvert g_{-j}(\xi) - g_{-j}(\eta)\rvert }{|\xi - \eta|^{ \mu}} < \INF \right \},
		\end{aligned}
	\end{equation} where $l_\INF (,l_1)$ is the space of bounded (, respectively summable) sequences, and for brevity, we use the notation $\jpj=(1+|j|^2)^{1/2}$.
	Similarly,  we consider $ C^{\mu}(\ol \OM; l_1) $, and $ C^{\mu}(\ol \OM; l_\INF) $.
	

	A sequence valued map $\OM \ni z\mapsto  \bv(z): = \langle v_{0}(z), v_{-1}(z),v_{-2}(z),... \rangle$
	in $C(\ol\OM;l_\INF)\cap C^1(\OM;l_\INF)$
	is called {\em $L^k$-analytic} (in the sense of Bukhgeim), $k=1,2$,  if
	\begin{equation}\label{Aanalytic}
		\ol{\del} \bv (z) + L^k \del \bv (z) = 0,\quad z\in\OM,
	\end{equation} where $L$ is the left shift operator $\ds L \langle v_{0}, v_{-1}, v_{-2}, \cdots  \rangle =  \langle v_{-1}, v_{-2},  \cdots \rangle, $ and $L^{2}=L\circ L$.
	
	Bukhgeim's original  theory in \cite{bukhgeimBook}  shows that solutions of \eqref{Aanalytic},  satisfy a Cauchy-like integral formula,
	\begin{align}\label{Analytic}
		\bv (z) = \B [\bv \lvert_{\Gam}](z), \quad  z\in\OM,
	\end{align} where $\B$ is 
	the Bukhgeim-Cauchy operator  acting on $\bv \lvert_{\Gam}$. We use the formula in \cite{finch}, where
	$\B$ is defined component-wise for $n\geq 0$ by
	\begin{equation} \label{BukhgeimCauchyFormula}
		\begin{aligned} 
			(\B \bg)_{-n}(z) := \frac{1}{2\pi \i} \int_{\Gam}
			\frac{ g_{-n}(\zeta)}{\zeta-z}d\zeta   + \frac{1}{2\pi \i}\int_{\Gam} \left \{ \frac{d\zeta}{\zeta-z}-\frac{d \ol{\zeta}}{\ol{\zeta}-\ol{z}} \right \} \sum_{j=1}^{\infty}  
			g_{-n-j}(\zeta)
			\left( \frac{\ol{\zeta}-\ol{z}}{\zeta-z} \right) ^{j},\; z\in\OM.
		\end{aligned}
	\end{equation}

	The following regularity result in \cite[Proposition 4.1]{sadiqTamasan01} is needed. 
	\begin{prop}\cite[Proposition 4.1]{sadiqTamasan01}\label{functoseq_regularityprop} 
		Let $\mu>1/2$ and $\bg= \langle g_{0}, g_{-1}, u_{-2}, ... \rangle $ be the sequence valued map of non-positive Fourier modes of $g$.
		
		(i) If $g\in C^\mu(\Gam; C^{1,\mu}(\sph))$, then $\bg\in l^{1,1}_\infty(\Gam)\cap C^\mu(\Gam; l_1)$.
		
		(ii) If $g\in C^\mu(\Gam; C^{1,\mu}(\sph))\cap C(\Gam;C^{2,\mu}(\sph))$, then $\bg\in Y_\mu(\Gam)$.
	\end{prop}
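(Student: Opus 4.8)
The plan is to reduce the three estimates to a single quantitative statement about absolute convergence of Fourier series on $\sph$, namely Bernstein's theorem, and then to read off each norm by matching the differentiation order in the angular variable to the weight $\jpj^k$ appearing in the target space. For fixed $\xi\in\Gam$ I write the angular Fourier expansion $g(\xi,\tta)=\sum_{n\in\BZ}g_n(\xi)e^{\i n\tta}$, so that the entries of $\bg(\xi)$ are precisely the coefficients $g_{-j}(\xi)$, $j\geq 0$. The key lemma I would isolate is: if $\mu>1/2$ and $h\in C^{k,\mu}(\sph)$, then $\sum_{n\in\BZ}\jpn^k\,\abs{h_n}\leq C\,\norm{h}_{C^{k,\mu}(\sph)}$ with $C$ independent of $h$, where $h_n$ denote the Fourier coefficients of $h$. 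This follows from Bernstein's theorem applied to the $k$-th angular derivative: $h^{(k)}\in C^{\mu}(\sph)$ with $\mu>1/2$, so its Fourier series converges absolutely and $\sum_n\abs{n}^k\abs{h_n}=\sum_n\bigl|\widehat{h^{(k)}}_n\bigr|\leq C\,\norm{h^{(k)}}_{C^\mu}$; combining this with the case $k=0$ applied to $h$ itself and the elementary bound $\jpn^k\lesssim 1+\abs{n}^k$ gives the claim. The threshold $\mu>1/2$ enters precisely here, and is the sharp hypothesis for Bernstein's theorem.

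For part (i) I would apply the lemma with $k=1$. The estimate
\[
\sum_{j=0}^{\INF}\jpj\,\abs{g_{-j}(\xi)}\le\sum_{n\in\BZ}\jpn\,\abs{g_n(\xi)}\le C\,\norm{g(\xi,\cdot)}_{C^{1,\mu}(\sph)}\le C\,\norm{g}_{C^\mu(\Gam;C^{1,\mu}(\sph))}
\]
holds uniformly in $\xi$, giving $\bg\in l^{1,1}_\INF(\Gam)$; dropping the weight $\jpj$ yields the uniform $l_1$-bound needed for $C^\mu(\Gam;l_1)$. For the H\"older seminorm I would apply the same $k=1$ lemma to the difference $g(\xi,\cdot)-g(\eta,\cdot)$, whose $C^{1,\mu}(\sph)$-norm is at most $[g]_{C^\mu(\Gam;C^{1,\mu})}\,\abs{\xi-\eta}^\mu$ by hypothesis; since $g_{-j}(\xi)-g_{-j}(\eta)$ is the $(-j)$-th angular Fourier coefficient of that difference, this gives $\norm{\bg(\xi)-\bg(\eta)}_{l_1}\le C\,\abs{\xi-\eta}^\mu$, and hence $\bg\in C^\mu(\Gam;l_1)$.

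For part (ii) the two requirements defining $Y_\mu(\Gam)$ split according to which hypothesis is used. To obtain $\bg\in l^{1,2}_\INF(\Gam)$ I would invoke the lemma with $k=2$ and the hypothesis $g\in C(\Gam;C^{2,\mu}(\sph))$, producing $\sum_{j}\jpj^2\,\abs{g_{-j}(\xi)}\le C\,\norm{g(\xi,\cdot)}_{C^{2,\mu}(\sph)}$ uniformly in $\xi$. For the remaining weighted H\"older quantity the weight is only $\jpj$, so I would again use the $k=1$ lemma applied to $g(\xi,\cdot)-g(\eta,\cdot)$ together with the hypothesis $g\in C^\mu(\Gam;C^{1,\mu}(\sph))$, which gives $\sum_j\jpj\,\abs{g_{-j}(\xi)-g_{-j}(\eta)}\le C\,\abs{\xi-\eta}^\mu$; dividing by $\abs{\xi-\eta}^\mu$ and taking the supremum over distinct $\xi,\eta$ completes the verification that $\bg\in Y_\mu(\Gam)$.

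The only genuinely delicate point is the summability lemma at the critical exponent: a naive pointwise decay estimate $\abs{h_n}\lesssim\abs{n}^{-(k+\mu)}$ extracted from H\"older regularity is \emph{not} enough to sum $\sum_n\jpn^k\abs{h_n}$ when $\mu\le 1$, so one must use the absolute convergence afforded by Bernstein's theorem (equivalently, a Cauchy--Schwarz argument against the embedding $C^{k,\mu}(\sph)\hookrightarrow H^{k+\delta}(\sph)$ for some $1/2<\delta<\mu$, which again needs $\mu>1/2$). Everything else is bookkeeping: interchanging $\sum_j$ with the supremum over $\xi$, and noting that the full bilateral sum over $n\in\BZ$ dominates the one-sided sum over non-positive modes.
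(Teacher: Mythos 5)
Your proposal is correct and takes essentially the same route as the proof this paper relies on: Proposition \ref{functoseq_regularityprop} is quoted from \cite{sadiqTamasan01}, and the argument there likewise rests on Bernstein's theorem for H\"older exponent $\mu>1/2$ (via \cite{katznelson}), applied to the angular derivatives of $g(\xi,\cdot)$ and to the differences $g(\xi,\cdot)-g(\eta,\cdot)$, with the weight $\jpj^k$ matched to the order of angular differentiation exactly as you do. Your closing remark is also the right one: the pointwise decay $\lvert g_n\rvert\lesssim \lvert n\rvert^{-(k+\mu)}$ alone would not suffice to sum the weighted series, which is precisely why the absolute-convergence form of Bernstein's theorem, and hence the threshold $\mu>1/2$, is the crux.
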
 
		
	Similar to the analytic maps, the traces of $L$-analytic maps  on the boundary must satisfy some constraints, which can be expressed in terms of a corresponding Hilbert-like transform introduced in  \cite{sadiqTamasan01}. More precisely, the Bukhgeim-Hilbert transform $\HT$ acting on  $\bg$, 
	\begin{align}\label{boldHg}
		\Gam \ni z\mapsto  (\HT \bg)(z)& = \langle (\HT \bg)_{0}(z), (\HT \bg)_{-1}(z),(\HT \bg)_{-2}(z),... \rangle
	\end{align}
	is defined component-wise for $n\geq 0$ by
	
	\begin{equation} \label{BHtransform}
		\begin{aligned} 
			(\HT \bg)_{-n}(z)&=\frac{1}{\pi }\int_\Gam \frac{ g_{-n}(\zeta)}{\zeta-z}d\zeta + \frac{1}{\pi }\int_{\Gam} \left \{ \frac{d\zeta}{\zeta-z}-\frac{d \ol{\zeta}}{\ol{\zeta}-\ol{z}} \right \} \sum_{j=1}^{\infty}  
			g_{-n-j}(\zeta)
			\left( \frac{\ol{\zeta}-\ol{z}}{\zeta-z} \right) ^{j},\; z\in \Gam,
		\end{aligned}
	\end{equation}
	and we refer to \cite{sadiqTamasan01} for its mapping properties. 


    Note that the Bukhgeim-Cauchy integral formula in \eqref{BukhgeimCauchyFormula} above is restated in terms of $L$-analytic maps as opposed to $L^{2}$-analytic as in \cite{sadiqTamasan01}.  
    The only change is the index relabeling. In particular, 
    the index $g_{-n-j}$ will change to $g_{-n-2j}$ therein to account for $L^{2}$-analytic. Moreover, the same index relabelling in the Bukhgeim-Hilbert transform formula \eqref{BHtransform} is made to account for the difference between 
    $L$-analytic and $L^{2}$-analytic.
	
	The following result recalls the necessary and sufficient conditions for a sufficiently regular map to be the boundary value of an  $L^k$-analytic function, $k=1,2$.
	\begin{theorem}\label{NecSuf_BukhgeimHilbert_Thm}
		Let $0<\mu<1$, and $k=1,2$. Let $\B$ be the Bukhgeim-Cauchy operator in \eqref{BukhgeimCauchyFormula}.  \\
		Let $\bg = \langle g_{0}, g_{-1}, g_{-2},...\rangle\in Y_{\mu}(\Gam)$ for $\mu>1/2$ be defined on the boundary $\Gamma$, 
		and let $\HT$ be the Bukhgeim-Hilbert transform acting on $\bg$ as in \eqref{BHtransform}.
		
		(i) If $\bg$  is the boundary value of an $L^k$-analytic function, 
		then $\HT \bg\in C^{\mu}(\Gam;l_1)$ and satisfies 
		\begin{align} \label{NecSufEq}
			(I+ \i \HT) \bg = {\bf {0}}.
		\end{align}
		(ii)  If $\bg$  satisfies \eqref{NecSufEq}, then there exists an $L^k$-analytic function $ \bv :=\B \bg \in C^{1,\mu}(\OM;l_1)\cap C^{\mu}(\ol \OM;l_1)\cap C^2(\OM;l_\infty)$, such that
		\begin{align}\label{gdata_defn}
			\bv \lvert_{\Gam} = \bg.
		\end{align}
	\end{theorem}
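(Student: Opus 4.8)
The plan is to exploit the fact that the Bukhgeim--Cauchy operator $\B$ and the Bukhgeim--Hilbert transform $\HT$ are built from the \emph{same} singular kernel, evaluated respectively at interior points $z\in\OM$ and at boundary points $z\in\Gam$. Comparing \eqref{BukhgeimCauchyFormula} with \eqref{BHtransform}, and using $\tfrac{1}{\pi}=2\i\cdot\tfrac{1}{2\pi\i}$, one sees that $\HT\bg$ is exactly $2\i$ times the boundary (principal-value) evaluation of $\B\bg$. The whole theorem then rests on two ingredients: \textbf{(a)} for any admissible $\bg$ the map $\bv:=\B\bg$ solves \eqref{Aanalytic}, i.e.\ is $L^k$-analytic in $\OM$; and \textbf{(b)} a Plemelj--Sokhotski type jump relation for $\B$, namely that its nontangential interior trace satisfies
\begin{align*}
	(\B\bg)\big|_{\Gam}^{+} = \tfrac{1}{2}\,\bg - \tfrac{\i}{2}\,\HT\bg.
\end{align*}
Ingredient \textbf{(a)} is a direct differentiation of the integral formula, carried out in Bukhgeim's theory \cite{bukhgeimBook} and in \cite{finch}; ingredient \textbf{(b)} is the analogue, in the $A$-analytic category, of the classical jump formula for the Cauchy integral.

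Granting \textbf{(a)} and \textbf{(b)}, I would argue as follows. For necessity (i), if $\bg=\bv|_\Gam$ with $\bv$ being $L^k$-analytic, then the Cauchy formula \eqref{Analytic} gives $\bv=\B\bg$ in $\OM$, and taking the interior trace together with \textbf{(b)} yields $\bg=\tfrac{1}{2}\bg-\tfrac{\i}{2}\HT\bg$, which rearranges to $(I+\i\HT)\bg=\bzero$; the regularity $\HT\bg\in C^\mu(\Gam;l_1)$ follows from the mapping properties of $\HT$ on $Y_\mu(\Gam)$ established in \cite{sadiqTamasan01}. For sufficiency (ii), I set $\bv:=\B\bg$; by \textbf{(a)} it is $L^k$-analytic, the interior smoothing of the Cauchy kernel gives $C^{1,\mu}(\OM;l_1)\cap C^2(\OM;l_\infty)$, and the Privalov/Plemelj Hölder estimates for the singular integral (using $\bg\in Y_\mu$ with $\mu>1/2$) give $C^{\mu}(\ol\OM;l_1)$. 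Finally, feeding the hypothesis, rewritten as $\HT\bg=\i\bg$, into the jump relation \textbf{(b)} gives $(\B\bg)|_\Gam^{+}=\tfrac{1}{2}\bg-\tfrac{\i}{2}(\i\bg)=\bg$, i.e.\ $\bv|_\Gam=\bg$, as required.

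Since \eqref{BukhgeimCauchyFormula}--\eqref{BHtransform} are written for $L$-analytic ($k=1$) maps by relabeling the $L^2$-analytic ($k=2$) formulas of \cite{sadiqTamasan01}, the argument is identical in the two cases: one simply replaces the shift index $g_{-n-j}$ by $g_{-n-2j}$ throughout, which changes none of the estimates, so it suffices to prove the statement for one value of $k$ and invoke the relabeling for the other. The main obstacle is establishing the jump relation \textbf{(b)} with convergence in the correct topology: the second kernel $\bigl\{\tfrac{d\zeta}{\zeta-z}-\tfrac{d\ol\zeta}{\ol\zeta-\ol z}\bigr\}$ multiplies an infinite sum whose terms carry the unimodular-but-discontinuous factor $\bigl(\tfrac{\ol\zeta-\ol z}{\zeta-z}\bigr)^{j}$, so one must justify interchanging the limit $z\to\Gam$ with the summation and control each principal-value integral uniformly in $n$ and $j$. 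It is precisely here that the space $Y_\mu(\Gam)$ and the threshold $\mu>1/2$ enter, ensuring $l_1$-summable Hölder bounds; once \textbf{(b)} is in hand the remaining rearrangements are purely algebraic.
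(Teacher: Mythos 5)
Your proposal is correct and follows essentially the same route as the paper's proof: the paper establishes this theorem by citing \cite{sadiqTamasan01} (Theorem 3.2 for the $L^k$-analyticity and interior regularity of $\B\bg$, and Proposition 4.2 together with Corollary 4.1 for the Sokhotski--Plemelj jump relation and the resulting characterization $(I+\i\HT)\bg=\bzero$) and \cite{sadiqTamasan02}, which are precisely your ingredients \textbf{(a)} and \textbf{(b)}. Your kernel comparison $\tfrac{1}{\pi}=2\i\cdot\tfrac{1}{2\pi\i}$, the sign conventions in the jump formula, the algebra converting it into \eqref{NecSufEq}, and the closing remark on index relabeling between $L$- and $L^2$-analytic maps all check out against the paper, the only caveat being that, exactly like the paper, you invoke rather than prove the two technical ingredients.
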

	For the proof  of  Theorem \ref{NecSuf_BukhgeimHilbert_Thm}  we refer to \cite[Theorem 3.2,  Corollary 4.1, and Proposition 4.2]{sadiqTamasan01} and \cite[Proposition 2.3]{sadiqTamasan02}.
	
	
	Another ingredient, in addition to $L^2$-analytic maps,  consists in the one-to-one relation between solutions
	$ \bu: = \langle u_{0}, u_{-1},u_{-2},... \rangle $
	satisfying
	\begin{align}\label{beltrami}
		\dba u_{-n}(z) +\del u_{-n-2}(z) + a(z)u_{-n-1}(z) &=0,\quad z\in \OM, \; n\geq 0,
	\end{align}
	and the $L^2$-analytic map $\bv = \langle v_{0}, v_{-1},v_{-2},... \rangle $ satisfying
	\begin{align}\label{Analytic1}
		\dba v_{-n}(z) +\del v_{-n-2}(z) &=0,\quad z\in \OM, \; n\geq 0;
	\end{align} via a special function $h$, see \cite[Lemma 4.2]{sadiqScherzerTamasan} for details.
	The function $h$ is defined as 
	\begin{align}\label{hDefn}
		h(z,\btheta) := Da(z,\btheta) -\frac{1}{2} \left( I - \i H \right) Ra(z\cdot \btheta^{\perp}, \btheta^{\perp}),
	\end{align} where $\btheta^\perp$ is  the counter-clockwise rotation of $\btheta$ by $\pi/2$,
	$Ra(s, \btheta^{\perp}) = \ds \int_{-\INF}^{\INF} a\left( s \btheta^{\perp} +t \btheta \right)dt$ is the Radon transform in $\BR^2$  of the attenuation $a$,
	$Da(z,\btheta) =\ds \int_{0}^{\INF} a(z+t\btheta)dt$ is the divergent beam transform of the attenuation $a$, 
	and $\ds H h(s) = \ds \frac{1}{\pi} \int_{-\INF}^{\INF} \frac{h(t)}{s-t}dt $ is the classical Hilbert transform \cite{muskhellishvili}, 
	taken in the first variable and evaluated at $s = z \cdotp \btheta^{\perp}$. 
	The function $h$ appeared first in \cite{nattererBook} and enjoys the crucial property of having vanishing negative Fourier modes yielding the expansions
	\begin{align}\label{ehEq}
		e^{- h(z,\btheta)} := \sum_{k=0}^{\INF} \alpha_{k}(z) e^{\i k\tta}, \quad e^{h(z,\btheta)} := \sum_{k=0}^{\INF} \beta_{k}(z) e^{\i k\tta}, \quad (z, \btheta) \in \ol\OM \times \sph.
	\end{align}
	Using the Fourier coefficients of  $e^{\pm h}$,  define the integrating operators $e^{\pm G} \bu $ component-wise for each $n \leq 0$, by 
	\begin{align}\label{eGop}
		(e^{-G} \bu )_n = (\balpha \ast \bu)_n = \sum_{k=0}^{\infty}\alpha_{k} u_{n-k}, \quad \text{and} \quad 
		(e^{G} \bu )_n = (\bbeta \ast \bu)_n = \sum_{k=0}^{\infty}\beta_{k} u_{n-k},
	\end{align} where $\balpha$ and $\bbeta$ is given by
	\begin{align*}
		&\ol \OM \ni z\mapsto \balpha(z) := \langle \alpha_{0}(z), \alpha_{1}(z), \alpha_{2}(z),  ... , \rangle,  
		\quad \ol \OM \ni z\mapsto \bbeta(z) := \langle \beta_{0}(z), \beta_{1}(z), \beta_{2}(z),  ... , \rangle. 
	\end{align*}
	Note that $e^{\pm G} $ can also be written in terms of left translation operator as
	\begin{align}\label{eGop_leftshift}
		e^{-G} \bu = \sum_{k=0}^{\infty}\alpha_{k}L^{k} \bu , \quad \text{and} \quad e^{G} \bu = \sum_{k=0}^{\infty}\beta_{k}L^{k} \bu,
	\end{align} where $L^{k}$ is the $k$-th composition of left translation operator.
	It is important to note that the operators $e^{\pm G}$ commute with the left translation, $ [e^{\pm G}, L]=0$.
	We refer \cite[Lemma 4.1]{sadiqScherzerTamasan} for the properties of $h$, and we restate the following result \cite[Proposition 5.2]{sadiqTamasan01} to incorporate the operators $e^{\pm G}$ notation used in here.
	\begin{prop}\cite[Proposition 5.2]{sadiqTamasan01}\label{eGprop}
		Let $a\in C^{1,\mu}(\ol \OM)$, $\mu>1/2$. Then $\balpha , \del \balpha, \bbeta , \del \bbeta \in  l^{1,1}_{\INF}(\ol \OM)$, and 
		the operators
		\begin{equation}\label{eGmaps}
			\begin{aligned}
				&(i) \,e^{\pm G}:C^{\mu} (\ol\OM ; l_{\INF}) \to C^{\mu} (\ol\OM ; l_{\INF}); \\
				&(ii) \, e^{\pm G}:C^{\mu} (\ol\OM ; l_{1}) \to C^{\mu} (\ol\OM ; l_{1}); \\
				&(iii) \, e^{\pm G}: Y_{\mu}(\Gam) \to Y_{\mu}(\Gam).
			\end{aligned}
		\end{equation}
	\end{prop}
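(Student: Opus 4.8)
The plan is to split the statement into a \emph{coefficient layer} --- the summability $\balpha,\del\balpha,\bbeta,\del\bbeta\in l^{1,1}_{\INF}(\ol\OM)$ --- and an \emph{operator layer}, deducing the three mappings in \eqref{eGmaps} from the first by reading $e^{\pm G}$ in \eqref{eGop} as convolutions in the mode index. I would treat the coefficient layer first, since it carries all the analytic content.

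For the coefficients, recall that $a\in C^{1,\mu}(\ol\OM)$ is compactly supported, so the divergent beam transform $Da(z,\cdot)$ and the Radon transform $Ra(\,\cdot\,,\btheta^\perp)$ depend on the angular variable as $C^{1,\mu}(\sph)$ functions of $\btheta$, uniformly for $z\in\ol\OM$; since the classical Hilbert transform preserves $C^{1,\mu}$, \eqref{hDefn} gives $h(z,\cdot)\in C^{1,\mu}(\sph)$, and composing with the entire function $\exp$ yields $e^{\pm h}(z,\cdot)\in C^{1,\mu}(\sph)$ uniformly in $z$. This is exactly where $\mu>1/2$ enters: for $\phi\in C^{1,\mu}(\sph)$ the Cauchy--Schwarz bound $\sum_k\jpk|\widehat\phi_k|\le\bigl(\sum_k\jpk^{2-2s}\bigr)^{1/2}\bigl(\sum_k\jpk^{2s}|\widehat\phi_k|^2\bigr)^{1/2}$ with $s=1+\mu>3/2$ is finite, converting $C^{1,\mu}$-regularity in $\btheta$ into summability of the Fourier coefficients against the weight $\jpk$; by \eqref{ehEq} this is precisely $\balpha,\bbeta\in l^{1,1}_{\INF}(\ol\OM)$. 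Differentiating $\del e^{\pm h}=\pm(\del h)\,e^{\pm h}$ and repeating the argument with the angular $C^{1,\mu}$-regularity of $\del h$ gives $\del\balpha,\del\bbeta\in l^{1,1}_{\INF}(\ol\OM)$. I would cite \cite[Lemma 4.1]{sadiqScherzerTamasan} and Proposition \ref{functoseq_regularityprop} for these regularity facts rather than reprove them.

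The operator layer then rests on one elementary inequality: since $j\mapsto\jpj$ is nondecreasing in $|j|$, for integers $0\le k\le m$ one has $\langle m-k\rangle\le\langle m\rangle$. Writing \eqref{eGop} as $(e^{-G}\bg)_{-\nu}=\sum_{k\ge0}\alpha_k\,g_{-\nu-k}$ (and identically for $e^{G}$ with $\bbeta$), putting $m=\nu+k$ and using this inequality telescopes the weighted sums, for $p=1,2$, to
\begin{equation*}
\sum_{\nu\ge0}\langle\nu\rangle^{p}\,\bigl|(e^{-G}\bg)_{-\nu}\bigr|\;\le\;\sum_{k\ge0}|\alpha_k|\sum_{m\ge k}\langle m\rangle^{p}\,|g_{-m}|\;\le\;\Bigl(\sup_{z}\lVert\balpha(z)\rVert_{l_1}\Bigr)\,\lVert\bg\rVert_{l^{1,p}_{\INF}},
\end{equation*}
which is finite because $\balpha\in l^{1,1}_{\INF}$ forces $\sup_z\lVert\balpha(z)\rVert_{l_1}<\INF$. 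Specializing $p$ (and, for $l_\INF$, replacing the outer sum by a supremum) yields the $l_\INF\to l_\INF$ and $l_1\to l_1$ bounds for (i),(ii) and the $l^{1,2}_{\INF}$ bound for (iii). For the Hölder dependence in $z$ demanded by $C^\mu(\ol\OM;\cdot)$ and by the seminorm defining $Y_\mu(\Gam)$, I would split $\alpha_k(\xi)g_{-\nu-k}(\xi)-\alpha_k(\eta)g_{-\nu-k}(\eta)=\alpha_k(\xi)\bigl[g_{-\nu-k}(\xi)-g_{-\nu-k}(\eta)\bigr]+\bigl[\alpha_k(\xi)-\alpha_k(\eta)\bigr]g_{-\nu-k}(\eta)$: the first piece is controlled by $\sup_z\lVert\balpha(z)\rVert_{l_1}$ times the Hölder data of $\bg$, while for the second the convexity of $\ol\OM$, the mean value theorem and $\del\balpha\in l^{1,1}_{\INF}$ give $\sum_k|\alpha_k(\xi)-\alpha_k(\eta)|\le C(\operatorname{diam}\OM)^{1-\mu}|\xi-\eta|^{\mu}$, i.e. $\balpha\in C^\mu(\ol\OM;l_1)$, after which the same telescoping moves the weight onto $\bg$. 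The identical argument with $\bbeta,\del\bbeta$ handles $e^{G}$, completing (i)--(iii).

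The genuinely delicate step is the coefficient layer: it is there that the borderline threshold $\mu>1/2$ is forced, and that the angular regularity of $\del h$ --- where each $z$-derivative draws on the $C^{1,\mu}$-regularity of $a$ --- must be tracked carefully. Once $\balpha,\del\balpha,\bbeta,\del\bbeta\in l^{1,1}_{\INF}(\ol\OM)$ is in hand, the mapping properties are soft, being Young-type convolution estimates organized by the single weight inequality $\langle m-k\rangle\le\langle m\rangle$.
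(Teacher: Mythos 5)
Your proposal cannot be compared against a proof in this paper, because the paper gives none: Proposition \ref{eGprop} is quoted verbatim (modulo the $e^{\pm G}$ notation) from \cite[Proposition 5.2]{sadiqTamasan01}, with the analytic content on $h$ deferred to \cite[Lemma 4.1]{sadiqScherzerTamasan}. Measured against that source, your two-layer architecture is exactly the one used there: the weighted summability $\balpha,\del\balpha,\bbeta,\del\bbeta\in l^{1,1}_{\INF}(\ol\OM)$ comes from angular H\"older regularity of $e^{\pm h}$ through a Bernstein-type theorem (this is precisely where $\mu>1/2$ enters, cf.\ \cite{katznelson}), and the three mapping properties are then Young-type convolution estimates driven by the monotonicity $\langle m-k\rangle\le\langle m\rangle$. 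Your operator layer, including the telescoping display and the two-term splitting for the H\"older seminorms, is sound as written.

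Two steps in your coefficient layer need repair, though both are fixable. First, your one-shot Cauchy--Schwarz with $s=1+\mu$ requires $\sum_k\jpk^{2+2\mu}|\widehat\phi_k|^2<\INF$, i.e.\ $\phi=e^{\pm h}(z,\cdot)\in H^{1+\mu}(\sph)$; this does \emph{not} follow from $\phi\in C^{1,\mu}(\sph)$, since H\"older classes do not embed into Sobolev classes of the same order (a lacunary series $\sum_j 2^{-j(1+\mu)}e^{\i 2^j\tta}$ lies in $C^{1,\mu}$ but not in $H^{1+\mu}$). You must either give up an epsilon --- choose $\mu'$ with $1/2<\mu'<\mu$, use $C^{1,\mu}(\sph)\subset H^{1+\mu'}(\sph)$ and $s=1+\mu'>3/2$ --- or run Cauchy--Schwarz on dyadic blocks, which is the actual proof of Bernstein's theorem applied to $\del_\tta e^{\pm h}\in C^{\mu}(\sph)$. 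Second, your mean-value-theorem bound on $\sum_k|\alpha_k(\xi)-\alpha_k(\eta)|$ uses only $\del\balpha\in l^{1,1}_{\INF}(\ol\OM)$; since the $\alpha_k$ are complex-valued and not holomorphic, $\del\alpha_k$ alone does not control $\nabla\alpha_k$, and you need $\dba\balpha\in l^{1,1}_{\INF}(\ol\OM)$ as well --- it follows from the same analysis of $h$, but it is not part of the stated conclusion, so it must be proved (or quoted) explicitly. Finally, a caution on your opening phrase: $a$ is not assumed compactly supported, only $a\in C^{1,\mu}(\ol\OM)$ extended by zero outside $\OM$; the zero-extension is generally not $C^{1,\mu}(\BR^2)$, and the angular $C^{1,\mu}$-regularity of $Da$ and $Ra$ for $z\in\ol\OM$ is precisely the non-trivial content of the lemma on $h$ you defer to, not a consequence of support considerations.
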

	\begin{lemma}\cite[Lemma 4.2]{sadiqTamasan02}\label{beltrami_reduction}
		Let $a\in C^{1,\mu}(\ol \OM)$, $\mu>1/2$, and $e^{\pm G}$ be operators as defined in \eqref{eGop}. 
		
		(i) If $\bu \in C^1(\OM, l_1)$ solves $\ds \dba \bu +L^2 \del \bu+ aL\bu = 0$, then  $\ds \bv= e^{-G} \bu \in C^1(\OM, l_1)$ solves $\dba \bv + L^2\del \bv =0$.
		
		(ii) Conversely, if $\bv \in C^1(\OM, l_1)$ solves $\dba \bv + L^2\del \bv =0$, then $\ds \bu= e^{G} \bv \in C^1(\OM, l_1)$ solves $\ds \dba \bu +L^2 \del \bu+ aL\bu = 0$.
	\end{lemma}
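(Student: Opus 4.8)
The plan is to reduce both implications to a single algebraic identity satisfied by the Fourier coefficients $\alpha_k,\beta_k$ of $e^{\mp h}$ in \eqref{ehEq}, and then to read that identity off the one differential property of $h$ that drives everything. Writing the transport field in complex form, $\btheta\cdot\nabla = e^{\i\tta}\del + e^{-\i\tta}\dba$, and using that $Da(z,\btheta)=\int_0^{\infty} a(z+t\btheta)\,dt$ satisfies $\btheta\cdot\nabla\,Da=-a$ while the second summand in \eqref{hDefn} depends on $z$ only through $z\cdot\btheta^\perp$ (hence is annihilated by $\btheta\cdot\nabla$, since $\btheta\cdot\btheta^\perp=0$), one obtains $\btheta\cdot\nabla h=-a$; this is the content of \cite[Lemma 4.1]{sadiqScherzerTamasan}. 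Consequently $\btheta\cdot\nabla\,e^{-h}=a\,e^{-h}$ and $\btheta\cdot\nabla\,e^{h}=-a\,e^{h}$. Inserting the expansions \eqref{ehEq} and matching the coefficient of $e^{\i j\tta}$ yields, with the convention $\alpha_{-1}=\beta_{-1}=0$, the recursions
\begin{align*}
\del\alpha_{j-1}+\dba\alpha_{j+1}=a\,\alpha_j,\qquad \del\beta_{j-1}+\dba\beta_{j+1}=-a\,\beta_j,\qquad j\ge 0.
\end{align*}

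For part (i), set $\bv=e^{-G}\bu=\sum_{k\ge0}\alpha_k L^k\bu$. Since $\del$ and $\dba$ commute with $L$ and with multiplication by the scalar functions $\alpha_k$, and since $\bu$ solves $\dba\bu+L^2\del\bu=-aL\bu$, I would substitute the equation for $\bu$ into the term $\sum_k\alpha_kL^k(\dba\bu+L^2\del\bu)$ to get
\begin{align*}
\dba\bv+L^2\del\bv =\sum_{k\ge0}(\dba\alpha_k)L^k\bu+\sum_{k\ge0}(\del\alpha_k)L^{k+2}\bu -a\sum_{k\ge0}\alpha_k L^{k+1}\bu.
\end{align*}
Collecting the coefficient of $L^j\bu$ gives $\dba\alpha_j+\del\alpha_{j-2}-a\,\alpha_{j-1}$, which is exactly the $\alpha$-recursion taken at index $j-1$ and therefore vanishes for every $j\ge0$. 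Hence $\dba\bv+L^2\del\bv=0$, and the mapping properties in Proposition \ref{eGprop} keep $\bv\in C^1(\OM;l_1)$.

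Part (ii) is the mirror computation: with $\bu=e^{G}\bv=\sum_{k\ge0}\beta_kL^k\bv$ and $\dba\bv+L^2\del\bv=0$, the middle term drops out and the coefficient of $L^j\bv$ in $\dba\bu+L^2\del\bu+aL\bu$ reduces to $\dba\beta_j+\del\beta_{j-2}+a\,\beta_{j-1}$, which is zero by the $\beta$-recursion. That $e^{G}$ and $e^{-G}$ are genuine inverses on these solution classes, as the word ``conversely'' in the statement presupposes, follows from $e^{h}e^{-h}\equiv1$, i.e. the convolution identity $\balpha\ast\bbeta=\langle1,0,0,\dots\rangle$, which gives $e^{G}e^{-G}=e^{-G}e^{G}=\mathrm{Id}$ component-wise.

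The only genuinely non-formal point—and the step I expect to require the most care—is licensing the term-by-term operations above: interchanging the infinite sums defining $e^{\pm G}$ with $\del,\dba$, and keeping the outputs in $C^1(\OM;l_1)$. This is where I would invoke Proposition \ref{eGprop}: since $a\in C^{1,\mu}(\ol\OM)$ with $\mu>1/2$, one has $\balpha,\del\balpha,\bbeta,\del\bbeta\in l^{1,1}_{\INF}(\ol\OM)$, so both the convolution sums and their first $z$-derivatives converge absolutely in $l_1$, uniformly on compact subsets of $\OM$. This legitimizes differentiation under the summation sign and the rearrangements by powers of $L$, turning the formal identities into rigorous ones and completing both directions.
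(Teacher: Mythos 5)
The paper never proves this lemma: it is imported verbatim, by citation, from \cite[Lemma 4.2]{sadiqTamasan02}, so there is no in-paper argument to compare against and your proof has to stand on its own. It does, and it is in fact the standard mechanism behind the cited result: the transport identity $\btheta\cdot\nabla h=-a$ (divergent-beam term differentiates to $-a$, the $z\cdot\btheta^{\perp}$-dependent term is annihilated by $\btheta\cdot\nabla$), hence $\btheta\cdot\nabla e^{\mp h}=\pm a\,e^{\mp h}$; Fourier matching in $\tta$ to get recursions on $\balpha,\bbeta$; then collecting powers of $L$ in the convolution $e^{\mp G}$, with Proposition \ref{eGprop} ($\balpha,\del\balpha,\bbeta,\del\bbeta\in l^{1,1}_{\INF}(\ol\OM)$) licensing term-by-term differentiation and rearrangement in $l_1$. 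An equivalent, slightly slicker packaging (and the one implicit in the cited papers) is to work at the function level: if $u=e^{h}v$ then $\btheta\cdot\nabla u+au=e^{h}\,\btheta\cdot\nabla v$, and since $e^{\pm h}$ has only nonnegative Fourier modes, multiplication by $e^{\pm h}$ acts on the sequence of nonpositive modes exactly as your convolution operators $e^{\pm G}$; your computation is the component-wise shadow of this identity.

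One index needs repair, though it is a misstatement rather than a missing idea. You state the recursions $\del\alpha_{j-1}+\dba\alpha_{j+1}=a\alpha_j$ (and the $\beta$ analogue) only for $j\ge 0$, but your collection step invokes ``the recursion at index $j-1$'' for every $j\ge 0$, which at $j=0$ is the recursion at index $-1$, i.e.\ the statement $\dba\alpha_0=0$ (resp.\ $\dba\beta_0=0$). That identity is true and comes from the same matching you already performed: the coefficient of $e^{-\i\tta}$ in $\btheta\cdot\nabla e^{-h}$ is $\dba\alpha_0$, while $a\,e^{-h}$ has no negative modes. So the recursions hold for all $j\ge -1$ with the convention $\alpha_{-1}=\alpha_{-2}=\beta_{-1}=\beta_{-2}=0$; stated in that range, the coefficient of $L^{0}\bu$, namely $\dba\alpha_0$, vanishes as required and both parts of your argument close up. With that one-line correction the proof is complete and correct.
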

	
	\section{Even order $m$-tensor - non-attenuated case}\label{sec:RangeNART_mEven}

	We establish necessary and sufficient conditions for a sufficiently smooth function on $\Gam \times \sph$ to be the non-attenuated $X$-ray data of some sufficiently smooth real valued symmetric tensor field $\bbf$ of even order  $m=2q, \; q \geq 0$.
	In this non-attenuated case, 
	the transport equation \eqref{TransportEq2Tensor} becomes \begin{align}\label{NART_mEvenTEQ}
		&\btheta\cdot\nabla u(x,\btheta)= \sum_{k=-q}^{q} f_{2k}(x) e^{- \i (2k) \tta}, \quad x\in\OM,
	\end{align} 
	where $f_{2k}$ defined in \eqref{eq:Evenmodes_fk}, and $ f_{2k} = \ol{f_{-2k}}, \;$ for 
	$ -q \leq k \leq q, \, q \geq 0$.  Note that $f_0$ is real-valued while other modes are complex conjugates.
	
	For $z=x_1+\i x_2\in \OM$,  the advection operator $\btheta \cdot\nabla$ in complex notation becomes $e^{-\i \tta}\dba + e^{\i \tta}\del$, 
	where $\btheta=(\cos\tta,\sin\tta)$, and  $\dba, \del$ are the Cauchy-Riemann operators in \eqref{eq:ddbar}.

	If $\ds \sum_{n \in \BZ} u_{n}(z) e^{\i n\tta}$ is the Fourier series  expansion in the angular variable $\btheta$ of a solution $u$ of \eqref{NART_mEvenTEQ}, then, provided some sufficient decay (to be specified later) of $u_n$ to allow regrouping, the equation \eqref{NART_mEvenTEQ} reduces to the system:
	\begin{align} \label{NART_mEven_fmEq}
		&\ol{\del} u_{-(2n-1)}(z) + \del u_{-(2n+1)}(z) = f_{2n}(z), && 0 \leq n \leq q, \; q \geq 0, \\ 
		\label{NARTmEvenAnalyticEq_Odd}
		&\ol{\del} u_{-(2n-1)}(z) + \del u_{-(2n+1)}(z) = 0, && n \geq q+1, \; q \geq 0, \\ 
		\label{NARTmEvenAnalyticEq_Even}
		&\ol{\del} u_{-2n}(z) + \del u_{-(2n+2)}(z) = 0, &&  n \geq 0.
	\end{align}
	
	Recall that the trace $u \lvert_{\Gam \times \sph} := g$ as in \eqref{g_trace}, with $g= X \bbf \text{ on }\Gam_+ \mbox{ and  }g= 0 \mbox{ on }\Gam_- \cup \Gam_{0}.$

	The range characterization is given in terms of the Fourier modes of $g$ in the angular variables:
	\begin{align*}
		g(\zeta,\btheta) = \sum_{n =-\infty}^{\infty} g_{n}(\zeta) e^{\i n\tta}, \quad \zeta \in \Gam.
	\end{align*}
	Since the trace $g$ is also real valued, its Fourier modes will satisfy 
	$\ds g_{-n} = \ol{g}_{n}$, for $ n \geq 0. $ \\
	From the non-positive Fourier modes, we built the sequences
	\begin{align}\label{gEvenOdd_mEven}
		\bg^{\text{even}}&:=\langle g_{0}, {g}_{-2}, g_{-4}, ... \rangle, \quad \text{ and }
		\quad \bg^\text{{odd }}:=\langle g_{-1}, g_{-3}, g_{-5},  ... \rangle.
	\end{align}
	
	From the negative odd modes starting from mode $(2q+1)$, we built the sequence
	\begin{align}\label{mEven_Lm2_gOdd}
		L^{q}\bg^\text{{odd }}:= \langle {g}_{-(2q+1)}, g_{-(2q+3)}, g_{-(2q+5)}, ... \rangle, \quad q \geq 0,
	\end{align} where $L^{q}$ is the $q$-th composition of left translation operator.
	

	We characterize next the non-attenuated $X$-ray data $g$ in terms of the Bukhgeim-Hilbert Transform $\HT$ in \eqref{BHtransform}. 
	We will construct the solution $u$ of the transport equation \eqref{NART_mEvenTEQ}, whose trace matches the boundary data $g$, and also construct the right hand side of the \eqref{NART_mEvenTEQ}.
	The construction of solution $u$ is in terms of its Fourier modes in the angular variable. We first construct the non-positive Fourier modes and then the positive Fourier modes are constructed by conjugation. 
	For even $m =2q$, $q \geq 1$, apart from $q$ many Fourier modes $u_{-1}, u_{-3}, \cdots u_{-(2q-1)}$, all non-positive Fourier modes are defined by Bukhgeim-Cauchy integral formula \eqref{BukhgeimCauchyFormula} using boundary data. 
	Other than having the traces $u_{-(2j-1)} \big \lvert_{\Gam} = g_{-(2j-1)}, \; 1 \leq j \leq q,\, q \geq 1$, on the boundary, the $q$ many Fourier modes $u_{-(2j-1)}, \; 1 \leq j \leq q, \, q \geq 1$, are unconstrained. They are chosen arbitrarily from the class $\Psi_{g}^{\text{even}}$ of functions of cardinality $q=\frac{m}{2}$ with prescribed trace on the boundary $\Gam$ defined as
	\begin{align}\nonumber 
		\Psi_{g}^{\text{even}}:=
		&\left \{ \left( \psi_{-1}, \psi_{-3}, \cdots ,\psi_{-(2q-1)}\right) \in  \left(C^{1, \mu}(\ol\OM; \BC)\right)^{q}, 2\mu >1:
		\right. \\  \label{NART_mEvenPsiClass}
		&\left. \quad \vphantom{\int} 
		\psi_{-(2j-1)} \big \lvert_{\Gam}= g_{-(2j-1)}, \; 1 \leq j \leq q, \; q \geq 1 \right\}.
	\end{align}   
	
	\begin{remark}
		In the 0-tensor case ($m = 0$), there is no class, and the characterization of the $X$-ray data $g$ is in terms of the Fourier modes $\bg$.  
	\end{remark}
	
	\begin{theorem}	[Range characterization for even order tensors]\label{RTmEvenTensor}
		
		(i) Let $\bbf  \in C_{0}^{1,\mu}(\mathbf{S}^m ; \OM)$,  $\mu > 1/2$, 
		be a  real-valued symmetric  tensor field of even order $m = 2q, \, q\geq 0$, and 
		$$g= X \bbf \text{ on }\Gam_+ \mbox{ and  }g= 0 \mbox{ on }\Gam_- \cup \Gam_{0}.$$
		Then  $\bg^{\emph{even}}, \bg^{\emph{odd}} \in l^{1,1}_{\INF}(\Gamma)\cap C^\mu(\Gamma;l_1)$ satisfy
		\begin{align}\label{RTmEvenTensorCondEven}
			&[I+\i\HT]\bg^{\emph{even}} = {\bf {0}}, \\ \label{RTmEvenTensorCondOdd}
			&[I+\i\HT]L^{\frac{m}{2}}\bg^{\emph{odd} } = {\bf {0}},
		\end{align} where $\bg^{\emph{even}}, \bg^{\emph{odd}}$ are sequences in \eqref{gEvenOdd_mEven}, and 
		$\HT$ is the Bukhgeim-Hilbert operator in \eqref{BHtransform}.
		
		(ii) Let $g\in C^{\mu} \left(\Gam; C^{1,\mu}(\sph) \right)\cap C(\Gam;C^{2,\mu}(\sph))$ be real valued with $g \lvert_{\Gam_{-} \cup \Gam_{0}}=0$.
		For $q=0$, if the corresponding sequences $\bg^{\emph{even}}, \bg^{\emph{odd}} \in Y_{\mu}(\Gam)$ satisfies \eqref{RTmEvenTensorCondEven} and  \eqref{RTmEvenTensorCondOdd}, 
		then there is a unique real valued symmetric $0$-tensor $\bbf$ such that $g\lvert_{\Gam_{+}} = X\bbf$.
		Moreover, for $q \geq 1$, if $\bg^{\emph{even}}, \bg^{\emph{odd}}\in Y_{\mu}(\Gam)$ satisfies \eqref{RTmEvenTensorCondEven} and  \eqref{RTmEvenTensorCondOdd}, and for each element $\left( \psi_{-1}, \psi_{-3}, \cdots ,\psi_{-(2q-1)}\right)   \in \Psi_{g}^{\emph{even}}$,
		then there is a unique real valued symmetric $m$-tensors $\bbf_{\Psi} \in C^\mu(\mathbf{S}^m; \OM)  $ such that $g\lvert_{\Gam_{+}} = X\bbf_{\Psi}$.
	\end{theorem}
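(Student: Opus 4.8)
The plan is to route everything through the transport model \eqref{bvp_transport}: the tensor is encoded in the angular Fourier modes of the unique solution $u$ of $\btheta\cdot\nabla u=\langle\bbf,\btheta^m\rangle$ with $u|_{\Gam_-}=0$, whose full boundary trace is the datum $g$ via \eqref{g_trace}. Writing the advection as $e^{-\i\tta}\dba+e^{\i\tta}\del$ and expanding $u=\sum_n u_n e^{\i n\tta}$ turns \eqref{NART_mEvenTEQ} into the mode system \eqref{NART_mEven_fmEq}--\eqref{NARTmEvenAnalyticEq_Even}. The decisive structural point is that this system splits the non-positive modes into two pieces that are each $L$-analytic after the relabeling $v_{-k}:=u_{-2k}$, respectively $v_{-k}:=u_{-(2q+1+2k)}$: the even modes $\langle u_0,u_{-2},\dots\rangle$ solve \eqref{NARTmEvenAnalyticEq_Even} on all of $\OM$, the odd modes from index $2q+1$ onward solve \eqref{NARTmEvenAnalyticEq_Odd}, while the remaining $q$ odd modes $u_{-1},\dots,u_{-(2q-1)}$ are unconstrained by analyticity and carry the tensor through \eqref{NART_mEven_fmEq}. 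On $\Gam$ these two $L$-analytic pieces have traces $\bg^{\text{even}}$ and $L^{q}\bg^{\text{odd}}$ from \eqref{gEvenOdd_mEven}, \eqref{mEven_Lm2_gOdd}.

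For necessity (i) I would take the unique transport solution $u$, use that $g=X\bbf$ is regular enough (from $\bbf\in C^{1,\mu}_0$) that Proposition \ref{functoseq_regularityprop}(i) places $\bg^{\text{even}},\bg^{\text{odd}}$ in $l^{1,1}_{\INF}(\Gam)\cap C^{\mu}(\Gam;l_1)$, and then apply the necessity half of Theorem \ref{NecSuf_BukhgeimHilbert_Thm}(i) to each of the two $L$-analytic sequences above. This yields \eqref{RTmEvenTensorCondEven} and \eqref{RTmEvenTensorCondOdd} directly.

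For sufficiency (ii) I would run the construction in reverse. Given the two range conditions and $\bg^{\text{even}},L^q\bg^{\text{odd}}\in Y_{\mu}(\Gam)$, Theorem \ref{NecSuf_BukhgeimHilbert_Thm}(ii) produces the unique $L$-analytic maps $\B\bg^{\text{even}}$ and $\B(L^q\bg^{\text{odd}})$, whose components furnish the even modes $u_0,u_{-2},\dots$ and the high odd modes $u_{-(2q+1)},u_{-(2q+3)},\dots$ in $C^{1,\mu}(\OM)\cap C^{\mu}(\ol\OM)$ with the prescribed traces. I would fix any representative $(\psi_{-1},\dots,\psi_{-(2q-1)})\in\Psi_{g}^{\text{even}}$ from \eqref{NART_mEvenPsiClass} for the $q$ free odd modes, define the source modes by \eqref{NART_mEven_fmEq}, namely $f_{2n}:=\dba u_{-(2n-1)}+\del u_{-(2n+1)}$ for $0\le n\le q$ together with $f_{-2n}:=\ol{f_{2n}}$, and recover a genuine real symmetric $m$-tensor $\bbf_{\Psi}$ by inverting \eqref{eq:Evenmodes_fk} and passing back through \eqref{eq:tilde_fk}. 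Assembling $u=\sum u_n e^{\i n\tta}$ with positive modes defined by conjugation gives a function solving \eqref{NART_mEvenTEQ} mode-by-mode with trace $g$; since $g|_{\Gam_-\cup\Gam_0}=0$, uniqueness for \eqref{bvp_transport} identifies $u$ with the transport solution, so \eqref{u_Gam+} yields $g|_{\Gam_+}=X\bbf_{\Psi}$. The case $q=0$ has no free modes and no class, giving the unique scalar $\bbf$; for $q\ge1$ the uniqueness of $\B$ makes the high modes rigid, so $\bbf_{\Psi}$ is the unique tensor attached to the chosen $\Psi$, and letting $\Psi$ vary exhibits the entire $X$-ray null space.

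The main obstacle is making the assembly step honest rather than formal, and it has two layers. First, every regrouping and all the Cauchy/Hilbert identities are valid only in the weighted sequence spaces, so I must track that $\B$, conjugation and the shift $L^q$ preserve membership in $l^{1,1}_{\INF}(\Gam)$, $C^{\mu}(\Gam;l_1)$ and $Y_{\mu}(\Gam)$ (Theorem \ref{NecSuf_BukhgeimHilbert_Thm} and Proposition \ref{functoseq_regularityprop}), so that $u$ is a bona fide $C^1$ solution and the recovered components land in $C^{\mu}(\mathbf{S}^m;\OM)$, the latter because each $f_{2n}$ is a first derivative of a $C^{1,\mu}$ mode. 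Second, and more delicate, the assembled $u$ must be real-valued: one checks that $f_0=\dba u_1+\del u_{-1}=2\,\re(\del u_{-1})$ is automatically real, but the zeroth even mode requires $\im u_0\equiv0$, which is not forced by the $L$-analytic equation for $u_0$ alone. I would secure it from the reality of the datum $g_0$ together with the zeroth component of \eqref{RTmEvenTensorCondEven}; once $u_0\in\BR$, the conjugation-defined positive modes are consistent, $u$ is the real transport solution, and the conclusion $g|_{\Gam_+}=X\bbf_{\Psi}$ follows.
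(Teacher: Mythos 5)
Your proposal follows essentially the same route as the paper's proof: the same splitting of the non-positive Fourier modes into the two $L$-analytic sequences with traces $\bg^{\text{even}}$ and $L^{q}\bg^{\text{odd}}$, the same application of Theorem \ref{NecSuf_BukhgeimHilbert_Thm} in both directions, the same choice of the $q$ free odd modes from $\Psi_{g}^{\text{even}}$, and the same formulas for the source modes (your $f_{2n}:=\dba u_{-(2n-1)}+\del u_{-(2n+1)}$, $0\le n\le q$, with $f_{-2n}:=\ol{f_{2n}}$, coincide with \eqref{NART_mEven_fmPsiEq}). The one place you add a worry --- reality of $u_{0}$ --- is already disposed of by your own final step: the assembled, a priori complex-valued $u$ solves \eqref{NART_mEvenTEQ} with the real right-hand side $\langle\bbf_{\Psi},\btheta^{m}\rangle$ and vanishing incoming trace, so uniqueness for \eqref{bvp_transport} identifies it with the (real) transport solution, making a separate argument from the zeroth component of \eqref{RTmEvenTensorCondEven} unnecessary.
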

	
	\begin{proof} (i) {\bf Necessity: }		
		Let $\bbf =(f_{i_1 \cdots i_m})  \in C_{0}^{1,\mu}(\mathbf{S}^m;\OM)$.
		Since all components $f_{i_1 \cdots i_m}\in C_{0}^{1,\mu}(\OM)$ are compactly supported inside $\OM$, then for any point at the boundary there is a cone of lines which do not meet the support.
		Thus $g \equiv 0$ in the neighborhood of the variety $\Gam_0$
		which yields $g \in C^{1,\mu}(\Gam \times \sph)$. Moreover, $g$ is the trace on $\Gam \times \sph$ of a solution $u \in C^{1,\mu}(\ol{\OM} \times \sph)$ of the transport equation \eqref{NART_mEvenTEQ}.
		By \cite[Proposition 4.1]{sadiqTamasan01} $\bg^{\text{even}}, \bg^{\text{odd}} \in l^{1,1}_{\INF}(\Gamma)\cap C^\mu(\Gamma;l_1)$. 
		
		If $u$ solves \eqref{NART_mEvenTEQ} then its Fourier modes satisfy \eqref{NART_mEven_fmEq}, \eqref{NARTmEvenAnalyticEq_Odd}, and \eqref{NARTmEvenAnalyticEq_Even}.
		Since the negative even Fourier modes $u_{2n}$ for $n \leq 0$, satisfies the system \eqref{NARTmEvenAnalyticEq_Even}, then
		the sequence valued map
		$$\OM \ni z\mapsto  \bu^{\text{even}}(z):= \langle u_{0}(z), u_{-2}(z), u_{-4}(z), u_{-6}(z), \cdots \rangle$$
		is $L$-analytic in $\OM$ and the necessity part in Theorem  \ref{NecSuf_BukhgeimHilbert_Thm} yields the condition \eqref{RTmEvenTensorCondEven}.
		
		The equation \eqref{NARTmEvenAnalyticEq_Odd} for negative odd Fourier modes starting from negative $2q+1$ mode, yield that the sequence valued map
		$$z\mapsto  \langle u_{-(2q+1)}, u_{-(2q+3)}, u_{-(2q+5)}, ... \rangle$$
		is $L$-analytic in $\OM$ and the necessity part in Theorem  \ref{NecSuf_BukhgeimHilbert_Thm} gives the condition  \eqref{RTmEvenTensorCondOdd}.

		(ii) {\bf Sufficiency: }
		Let $g\in C^{\mu} \left(\Gam; C^{1,\mu}(\sph) \right)\cap C(\Gam;C^{2,\mu}(\sph))$  be real valued with $g \lvert_{\Gam_{-} \cup \Gam_{0}}=0$.
		Since $g$ is real valued, its Fourier modes in the angular variable occurs in conjugates
		\begin{align}\label{reality_g}
			g_{-n}(\zeta) = \ol{g}_{n}(\zeta), \quad \text{ for }  \; n \geq 0, \; \zeta \in \Gam.
		\end{align}
		Let the corresponding sequences  $\bg^{\text{even}}$  satisfying \eqref{RTmEvenTensorCondEven} and $\bg^{\text{odd}}$  satisfying \eqref{RTmEvenTensorCondOdd}. 
		By Proposition (\ref{functoseq_regularityprop}), $\bg^{\text{even}}, \bg^{\text{odd}}\in Y_{\mu}(\Gam)$.
		
		Let $m =2q$, $q \geq 0$, be an even integer.
		To prove the sufficiency we will construct a real valued symmetric $m$-tensor $\bbf$ in $\OM$ and a real valued function $u \in C^{1}(\OM\times \sph)\cap C(\ol\OM \times \sph)$
		such that $u \lvert_{\Gam \times \sph}=g$ and $u$ solves \eqref{NART_mEvenTEQ} in $\OM$.
		The construction of such $u$ is in terms of its Fourier modes in the angular variable and it is done in several steps.

		{\bf Step 1: The construction of even modes $u_{2n}$ for $n \in \BZ$.}

		Apply the Bukhgeim-Cauchy Integral operator \eqref{BukhgeimCauchyFormula} to 
		construct the negative even Fourier modes:
		\begin{align}\label{construction_EVENS}
			\langle u_{0}(z), u_{-2}(z), u_{-4}(z), u_{-6}(z), ... \rangle := \B \bg^{\text{even}}(z), \quad z\in \OM.
		\end{align}
		By Theorem \ref{NecSuf_BukhgeimHilbert_Thm}, the sequence valued map
		\begin{align*}
			z\mapsto \langle u_{0}(z), u_{-2}(z), u_{-4}(z),  ... \rangle \in C^{1,\mu}(\OM;l_1)\cap C^{\mu}(\ol \OM;l_1),
		\end{align*}is $L$-analytic in $\OM$, thus the equations
		\begin{align}\label{uEvenL1}
			\ol{\del} u_{-2n} + \del u_{-2n-2} = 0,
		\end{align} are satisfied for all $n \geq 0$.
		Moreover, the hypothesis \eqref{RTmEvenTensorCondEven} and the sufficiency part of Theorem \ref{NecSuf_BukhgeimHilbert_Thm} yields that they extend continuously to $\Gam$ and $\ds 			u_{-2n}|_\Gam=g_{-2n},$ for all $n\geq 0$. 
		
		Construct the positive even Fourier modes by conjugation:
		$ \ds u_{2n}:=\ol{u_{-2n}}$, for all $n\geq 1$.
		
		By conjugating \eqref{uEvenL1} we note that the positive even Fourier modes also satisfy
		\begin{align*}
			\ol{\del} u_{2n+2} + \del u_{2n} = 0,\quad n\geq 0.
		\end{align*}Moreover, by reality of $g$ in \eqref{reality_g} they extend continuously to $\Gam$ and
		\begin{align*}
			u_{2n}|_\Gam=\ol{u_{-2n}}|_\Gam=\ol{g_{-2n}}=g_{2n},\quad n\geq 1.
		\end{align*}
		Thus, as a summary from above equations,
		we have shown that the even modes $u_{2n}$ satisfy
		\begin{equation} \label{PostiveEvens_Trace}
			\begin{aligned}
				\ol{\del} u_{2n} + \del{u_{2n-2}} &= 0, 
				\quad \text { and } \quad 
				u_{2n}\big \lvert_{\Gam} = g_{2n}, \quad \text{ for all } n\in \mathbb{Z}.
			\end{aligned}
		\end{equation}
		
		{\bf Step 2: The construction of odd modes $u_{2n-1}$ for $|n| \geq q, \, q \geq 0$.}
		
		Apply the Bukhgeim-Cauchy Integral operator  \eqref{BukhgeimCauchyFormula} to 
		construct the other odd negative  modes:
		\begin{align}\label{constructionODDSnegative}
			\langle u_{-(2q+1)}(z), u_{-(2q+3)}(z),  \cdots \rangle
			:= \B L^{q}\bg^{\text{odd}} (z), \quad z\in \OM.
		\end{align} 
		By Theorem \ref{NecSuf_BukhgeimHilbert_Thm}, the sequence valued map
		\begin{align*}
			z\mapsto \langle u_{-(2q+1)}(z), u_{-(2q+3)}(z), u_{-(2q+5)}(z),  ..., \rangle \in C^{1,\mu}(\OM;l_1)\cap C^{\mu}(\ol \OM;l_1),
		\end{align*}is $L$-analytic in $\OM$, thus the equations
		\begin{align} \label{negODDS}
			\ol{\del} u_{-(2n+1)} + \del u_{-(2n+3)} &= 0,  
		\end{align} are satisfied for all $n \geq q, \, q \geq 0$.
		Moreover, the hypothesis \eqref{RTmEvenTensorCondOdd} and the sufficiency part of Theorem \ref{NecSuf_BukhgeimHilbert_Thm} yields that they extend continuously to $\Gam$ and
		\begin{align}\label{NegOddsTrace}
			u_{-(2n+1)}|_\Gam=g_{-(2n+1)},\quad \forall n\geq q, \, q \geq 0.
		\end{align}
		Construct the positive odd Fourier modes by conjugation:
		$\ds u_{2n+1}:=\ol{u}_{-(2n+1)},$ for all $n\geq q, \, q \geq 0$.
	
	By conjugating \eqref{negODDS} we note that the positive odd Fourier modes also satisfy
		\begin{align}\label{PostiveOdds}
			\ol{\del} u_{2n+3} + \del{u_{2n+1}} = 0, \quad \forall n \geq q, \, q\geq 0.
		\end{align} Moreover, by \eqref{reality_g} they extend continuously to $\Gam$ and
		\begin{align}\label{PostiveOddsTrace}
			u_{2n+1}|_\Gam=\ol{u}_{-(2n+1)}|_\Gam=\ol{g}_{-(2n+1)}=g_{2n+1},\quad n \geq q, \, q\geq 0.
		\end{align}

		{\bf Step 3: The construction of the tensor field $\bbf$ in the $q=0$ case.}
		In the case of the 0-tensor, $\bbf=f_0$, and $f_0$ is uniquely determined   from the odd Fourier mode $u_{-1}$ in \eqref{constructionODDSnegative}, by
		\begin{align}\label{NART_0tensor_reconstruction}
			&f_0 :=2 \re \del u_{-1}, \quad (\text{for }q= 0 \text{ case}).
		\end{align}
		
		We consider next the case $q \geq 1$ of tensors of order 2 or higher. 
		In this case the construction of the tensor field $\bbf_{\Psi} $  is in terms of the Fourier mode $u_{-(2q+1)}$ in \eqref{constructionODDSnegative} and the class $\Psi_{g}^{\text{even}}$ in \eqref{NART_mEvenPsiClass}.
		
		{\bf Step 4: The construction of odd modes $u_{\pm(2n-1)}$,  for $1 \leq n \leq q, \; q \geq 1$.}
		
		Recall the non-uniqueness class $\Psi_{g}^{\text{even}}$ in \eqref{NART_mEvenPsiClass}. 
		
		For $\left( \psi_{-1}, \psi_{-3}, \cdots ,\psi_{-(2q-1)}\right)  \in \Psi_{g}^{\text{even}}$ arbitrary, define the modes $u_{\pm 1}, u_{\pm 3}, ..., u_{\pm (2q-1)}$ in $\OM$ by
		\begin{equation}\label{NART_mEven_uPsi}
			u_{-(2n-1)} := \psi_{-(2n-1)} \text{ and } u_{2n-1} := \ol{\psi}_{-(2n-1)}, \quad 1 \leq n \leq q, \; q \geq 1.
		\end{equation}
		
		By the definition of the class \eqref{NART_mEvenPsiClass}, and the reality of $g$ in \eqref{reality_g}, we have
		\begin{equation}\label{u1trace}
			\begin{aligned}
				u_{-(2n-1)} \lvert_{\Gam} &= g_{-(2n-1)}, \quad \text { and } \quad 
				u_{2n-1} \lvert_{\Gam} = \ol{g}_{-(2n-1)}= g_{2n-1}, \quad 1 \leq n \leq q, \; q \geq 1.
			\end{aligned}
		\end{equation}
		
		{\bf Step 5: The construction of the tensor field $\bbf_{\Psi} $ whose $X$-ray data is $g$.}
		
		The components of the $m$-tensor $\bbf_{\Psi} $ are defined via the one-to-one correspondence between  the pseudovectors $\langle \tilde{f}_0, \tilde{f}_{1}, \cdots , \tilde{f}_{m} \rangle $  and the functions  $\{f_{2n}:\; -q\leq n\leq q\}$ as follows. 
		
		For $q\geq 1$, we define $f_{2q}$  by using $\psi_{-(2q-1)}$ from the non-uniqueness class \eqref{NART_mEvenPsiClass}, and Fourier mode $u_{-(2q+1)}$ from the Bukhgeim-Cauchy formula \eqref{constructionODDSnegative}. Then, define $\{f_{2n}:\; 0 \leq n \leq q-1\}$ solely from the information in the non-uniqueness class. Finally, define $\{f_{-2n}:\; 1\leq n\leq q\}$ by conjugation. 
		\begin{equation} \label{NART_mEven_fmPsiEq}
			\begin{aligned} 
				&f_{2q}:= \ol{\del} \psi_{-(2q-1)}+ \del u_{-(2q+1)},  && q\geq 1,\\
				&f_{2n}:= \ol{\del} \psi_{-(2n-1)} + \del \psi_{-(2n+1)}, && 1 \leq n \leq q-1, \; q\geq 2, \\
				&f_{0}:=2 \re \del \psi_{-1}, && q\geq 1, \quad \text{and} \\
				&f_{-2n}:=\ol{f_{2n}}, && 1 \leq n \leq q, \; q\geq 1,
			\end{aligned}
		\end{equation}
		By construction, $f_{2n}\in C^\mu(\OM)$, for $-q\leq n\leq q$, as  $\psi_{-1},\cdots,\psi_{-2q+1}\in C^{1,\mu}(\OM)$.
		We use these Fourier modes $ f_0 , f_{\pm 2}, f_{\pm 4}, \cdots , f_{\pm 2q}$ for $q\geq 1,$ and equations \eqref{eq:Evenmodes_fk},  \eqref{reality_cond} and \eqref{eq:tilde_fk} to construct the pseudovectors $\langle \tilde{f}_0, \tilde{f}_{1}, \cdots , \tilde{f}_{m} \rangle $, and thus the $m$-tensor field $\bbf_{\Psi}  \in C^\mu(\mathbf{S}^m; \OM)  $.
		
		In order to show $g \lvert_{\Gam_{+}} = X\bbf_{\Psi}$ for $ q \geq 1$, with $\bbf_{\Psi}$ being constructed as in \eqref{NART_mEven_fmPsiEq}, we define the real valued function $u$ via its Fourier modes for $q \geq 1$,
		\begin{align}\label{NAT_evensolution}
			\footnotesize u(z,\btheta) =\sum_{n = -\INF}^{\INF} u_{2n}e^{\i 2n\tta}  +\sum_{|n| \geq q} u_{2n+1}e^{\i (2n+1)\tta} 
			+ \sum_{n=1}^{q} \psi_{-(2n-1)} e^{ - \i (2n-1)\tta} + \sum_{n = 1}^{q} \ol{\psi}_{-(2n-1)}e^{\i (2n-1)\tta}.
		\end{align}
		
		Since $g\in C^{\mu} \left(\Gam; C^{1,\mu}(\sph) \right)\cap C(\Gam;C^{2,\mu}(\sph))$, we use Proposition \ref{functoseq_regularityprop} (ii) and 
		\cite[Proposition 4.1 (iii)]{sadiqTamasan01} to conclude that $u$ defined in \eqref{NAT_evensolution} belongs to $C^{1,\mu}(\OM \times \sph)\cap C^{\mu}(\ol{\OM}\times \sph)$. 
		Using \eqref{PostiveEvens_Trace}, \eqref{NegOddsTrace}, \eqref{PostiveOddsTrace},  \eqref{u1trace}, and definition of $\left( \psi_{-1}, \psi_{-3}, \cdots ,\psi_{-(2q-1)}\right)  \in \Psi^{\text{even}}_{g}$ for  $ q \geq 1$, the trace
		$u(\cdot,\btheta)$ in \eqref{NAT_evensolution} extends to the boundary, $$\ds u(\cdot,\btheta)\lvert_{\Gam}= g(\cdot,\btheta).$$

		Since $u\in C^{1,\mu}(\OM \times \sph)\cap C^{\mu}(\ol{\OM}\times \sph)$, 
		then the term by term differentiation in \eqref{NAT_evensolution} is now justified, and $u$ satisfy \eqref{NART_mEvenTEQ}:
		\begin{align*}
			\btheta \cdot \nabla u &= \ol{\del} \; \ol{\psi_{-1}} +\del \psi_{-1}  
			+ \sum_{n=1}^{q-1} (\ol{\del} \psi_{-(2n-1)} + \del \psi_{-(2n+1)} )e^{ -\i (2n)\tta} + \sum_{n=1}^{q-1} (\ol{\del} \; \ol{\psi}_{-(2n+1)} +\del \ol{\psi}_{-(2n-1)})e^{ \i (2n)\tta} \\
			&\qquad + e^{-\i (2q) \tta} (\ol{\del}\psi_{-(2q-1)}+\del u_{-(2q+1)})   + e^{\i (2q) \tta} (\del \ol{\psi}_{-(2q-1)} + \ol{\del} \; \ol{u}_{-(2q+1)}) \\
			&=  \sum_{n=-q}^{q} f_{2n}(z) e^{- \i (2n) \tta} = \langle \bbf,  \btheta^{2q} \rangle,
		\end{align*}
		where the cancellation uses equations \eqref{PostiveEvens_Trace}, \eqref{negODDS}, \eqref{PostiveOdds},  \eqref{NART_mEven_uPsi},
		and the second equality uses the definition of $f_{2k}$'s in \eqref{NART_mEven_fmPsiEq}.
		
	\end{proof}
	
	
	\section{Odd order $m$-tensor - non-attenuated case}\label{sec:RangeNART_mOdd}
	In this section we establish necessary and sufficient conditions for a sufficiently smooth function on $\Gam \times \sph$ to be the non-attenuated $X$-ray data of some sufficiently smooth real valued symmetric tensor field $\bbf$ of odd order $ m =2q+1, \; q \geq 0$.
	
	%
	
	In the non-attenuated odd $m$-tensor case, the transport equation \eqref{TransportEq2Tensor} becomes 
	\begin{align}\label{NART_mOddTEQ}
		&\btheta\cdot\nabla u(z,\btheta)= \sum_{n=0}^{q} \left( f_{2n+1}(z) e^{- \i (2n+1) \tta} + f_{-(2n+1)}(z) e^{ \i (2n+1) \tta} \right) , \quad (z,\btheta)\in \OM\times\sph,
	\end{align} where $f_{2n+1}$ defined in \eqref{eq:Oddmodes_fk}, and $ f_{2n+1} = \ol{f_{-2n-1}}, \;$ for $ 0\leq n \leq q, \, q \geq 0$.
	
	If $\ds \sum_{n \in \BZ} u_{n}(z) e^{\i n\tta}$ is the Fourier series  expansion in the angular variable $\btheta$ of a solution $u$ of \eqref{NART_mOddTEQ}, then,  by identifying the Fourier modes of the same order, the equation \eqref{NART_mOddTEQ} reduces to the system:
	%
	\begin{align} \label{NART_mOdd_fmEq}
		&\ol{\del} u_{-2n}(z) + \del u_{-(2n+2)}(z) = f_{2n+1}(z), && 0 \leq n \leq q, \; q \geq 0, \\ \label{NARTmOddAnalyticEq_Even}
		&\ol{\del} u_{-2n}(z) + \del u_{-(2n+2)}(z) = 0, 
		&& n \geq q+1, \; q \geq 0,  \\ \label{NARTmOddAnalyticEq_Odd}
		&\ol{\del} u_{-(2n-1)}(z) + \del u_{-(2n+1)}(z) = 0, &&  n \geq 0.
	\end{align}
	
	In the odd $m$-tensor case, the even and odd Fourier modes of $u$ plays a different role, unlike  the even $m$-tensor case in the previous section. To emphasize this difference we separate the non-positive even  modes $\bu^{\text{even}} :=\langle u_{0}, u_{-2},u_{-4}...\rangle$, and  
	negative odd  modes $\bu^{\text{odd}} :=\langle u_{-1},u_{-3},...\rangle$, 
	and note that if $\langle u_{0}(z), u_{-1}(z),u_{-2}(z),... \rangle$ is $L^{2}$-analytic, then $\bu^{\text{even}},\bu^{\text{odd}}$ are $L$-analytic.
	
	Let us consider the sequence $\{\bu^{2k-1}\}_{k\geq1}\subset C(\ol\OM;l_\INF)\cap C^1(\OM;l_\INF)$ given by
	\begin{align}\label{uM_mOdd_NART}
		\bu^{2k-1}:=  \langle u_{2k-1}, u_{2k-3},....,u_{1}, u_{-1},u_{-3}, u_{-5}, ... \rangle, \quad k \geq 1,
	\end{align}
	obtained by augmenting the sequence of negative odd indices
	$\langle u_{-1},u_{-3}, u_{-5}, ... \rangle$ by $k$ many terms in the order $ \underbrace{u_{2k-1}, u_{2k-3},....,u_{1}}_{k} $.
	
	One of the ingredients in our characterization of the odd $m$-tensor is the following simple property of $L$-analytic maps, shown in \cite[Lemma 2.6]{sadiqTamasan01}.
	
	\begin{lemma} \cite[Lemma 2.6]{sadiqTamasan01} \label{UconjLemma_mOdd_NART}
		Let $\{\bu^{2k-1}\}_{k\geq1}$ be the sequence of $L$-analytic maps defined in \eqref{uM_mOdd_NART}. Assume that
		\begin{align}\label{uTraceConj_mOdd_NART}
			u_{2k-1} \lvert_{\Gam} = \ol{{u}_{-(2k-1)}} \lvert_{\Gam}, \quad \forall k\geq 1.
		\end{align}
		Then, for each $k \geq 1$,
		\begin{align}\label{identity_mOdd_NART}
			u_{2k-1}(z) = \ol{{u}_{-(2k-1)}(z)}, \quad z \in \OM.
		\end{align}
	\end{lemma}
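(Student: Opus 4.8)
The plan is to exploit the $L$-analyticity of each augmented sequence $\bu^{2k-1}$ together with the Bukhgeim-Cauchy integral representation, and to show that the conjugated sequences $\ol{\bu^{2k-1}}$ solve the \emph{same} boundary value problem, so that uniqueness forces the interior identity. First I would observe that, since $\langle u_0, u_{-1}, u_{-2}, \dots \rangle$ is $L^2$-analytic, the separated even and odd subsequences are each $L$-analytic, i.e.\ they satisfy $\ol\del v_{-n} + \del v_{-n-2} = 0$; in particular the shifted sequence $\bu^{2k-1}$ in \eqref{uM_mOdd_NART} is $L$-analytic in $\OM$ for every $k \geq 1$, as asserted in the statement. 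By the Bukhgeim-Cauchy formula \eqref{Analytic}, each $\bu^{2k-1}$ is determined in $\OM$ by its trace on $\Gam$ via $\bu^{2k-1} = \B\!\left[\bu^{2k-1}\lvert_{\Gam}\right]$.

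The key step is to show that the complex-conjugated sequence is itself $L$-analytic and matches the correct trace. I would define the reversed-and-conjugated sequence built from $\bu^{2k-1}$, namely $\langle \ol{u_{-(2k-1)}}, \ol{u_{-(2k-3)}}, \dots \rangle$, and check that conjugating the $L$-analyticity equation $\ol\del u_{-n} + \del u_{-n-2} = 0$ yields $\del \ol{u_{-n}} + \ol\del\, \ol{u_{-n-2}} = 0$, which is again an $L$-analyticity relation once the indices are read in the conjugated/reversed order (here $\del$ and $\ol\del$ swap roles under conjugation, matching the left-shift structure). Thus both $\bu^{2k-1}$ and its conjugate counterpart are $L$-analytic maps on $\OM$. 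By hypothesis \eqref{uTraceConj_mOdd_NART}, their traces on $\Gam$ agree componentwise. Applying the uniqueness of the Bukhgeim-Cauchy representation \eqref{Analytic} — an $L$-analytic map is recovered from its boundary trace by $\B$, hence two $L$-analytic maps with identical traces coincide in $\OM$ — gives the interior identity \eqref{identity_mOdd_NART} for each $k \geq 1$.

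I expect the main obstacle to be the bookkeeping of indices: one must verify that conjugation, combined with the particular ordering in \eqref{uM_mOdd_NART} that augments the negative-odd tail by the finitely many positive-odd modes $u_{2k-1}, \dots, u_1$, actually carries the $L$-analyticity system to itself rather than to some shifted or mismatched system. The subtlety is that conjugation interchanges $\del \leftrightarrow \ol\del$, so the naive conjugate of an $L$-analytic sequence is not $L$-analytic for the \emph{same} indexing — one has to reverse the roles of ``positive'' and ``negative'' modes, and the specific construction in \eqref{uM_mOdd_NART} is precisely what makes the conjugate sequence align with the analyticity operator. Once this indexing is set up correctly, the regularity needed to invoke \eqref{Analytic} follows from $\bu^{2k-1} \in C(\ol\OM; l_\INF) \cap C^1(\OM; l_\INF)$, and the conclusion is immediate from uniqueness.
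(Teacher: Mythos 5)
Your strategy --- conjugate and reverse the modes, check that the result is again an $L$-analytic map with the same trace as $\bu^{2k-1}$, and conclude from the uniqueness built into the Bukhgeim--Cauchy representation \eqref{Analytic} --- is exactly the strategy of the proof this paper defers to (\cite[Lemma 2.6]{sadiqTamasan01}; the lemma is not reproved here). However, one step, as you describe it, would fail: the reversed-and-conjugated sequence cannot be ``built from $\bu^{2k-1}$'' alone. The map $\bu^{2k-1}$ contains only the modes $u_n$ with odd $n\le 2k-1$, so conjugating and reversing its entries, starting from $\ol{u_{-(2k-1)}}$, produces the \emph{finite} list $\langle \ol{u_{-(2k-1)}},\dots,\ol{u_{-1}},\ol{u_{1}},\dots,\ol{u_{2k-1}}\rangle$ of $2k$ terms; a finite list is not a sequence-valued map, so neither the definition \eqref{Aanalytic} nor the uniqueness argument via \eqref{Analytic} applies to it. To obtain an infinite sequence $\bv^{k}=\langle \ol{u_{-(2k-1)}},\ol{u_{-(2k-3)}},\dots,\ol{u_{-1}},\ol{u_{1}},\ol{u_{3}},\dots\rangle$ you must use the modes $u_{2M-1}$ of \emph{every} member $\bu^{2M-1}$, $M>k$, of the family. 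Moreover, the $L$-analyticity relation of $\bv^{k}$ at position $j$, namely $\ol{\del}\,\ol{u_{2j-(2k-1)}}+\del\,\ol{u_{2j+2-(2k-1)}}=0$, is (after conjugation) the equation $\ol{\del}u_{m}+\del u_{m-2}=0$ with $m=2(j-k)+3$; for $j\ge 2k-1$ this is the top equation of $\bu^{m}$ with $m\ge 2k+1$, and it cannot be obtained by conjugating equations of $\bu^{2k-1}$. Likewise, matching the $j$-th trace of $\bv^{k}$ with that of $\bu^{2k-1}$ invokes \eqref{uTraceConj_mOdd_NART} at level $|2k-1-2j|$, hence at every level as $j$ varies. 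So the identity \eqref{identity_mOdd_NART} for one single $k$ genuinely requires the $L$-analyticity and trace hypotheses for the entire family; this is precisely the point the paper stresses right after \eqref{exceptional_mOddNART}, and your per-$k$, self-contained phrasing loses it.

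A smaller point you should also address: for $\bv^{k}$ to be $L$-analytic in the sense of \eqref{Aanalytic}, and for the representation $\bv^{k}=\B[\bv^{k}\lvert_{\Gam}]$ to be legitimate, you need $\bv^{k}\in C(\ol\OM;l_\INF)\cap C^{1}(\OM;l_\INF)$. This requires bounds on the $u_n$ (and, locally, on their first derivatives) that are uniform over all odd $n\ge -(2k-1)$; such uniformity does not follow from each $\bu^{2M-1}$ lying in that class separately, since the constants may grow with $M$. In the two places the lemma is applied this is harmless --- in the necessity part the $u_n$ are Fourier modes of a single $u\in C^{1,\mu}(\ol\OM\times\sph)$, and in the sufficiency part they are produced by $\B$ from the data of a single $g$ --- but a self-contained proof of the abstract lemma should say where this uniformity comes from. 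With these two repairs, building $\bv^{k}$ from the whole family and justifying its regularity, your uniqueness argument goes through and coincides with the cited proof.
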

	
	
	The range characterization of data $g$ will be given in terms of its Fourier modes:
	\begin{align*}
		g(\zeta,\theta) = \sum_{n=-\infty}^{\infty} g_{n}(\zeta) e^{\i n\fii}, \quad \zeta \in \Gam.
	\end{align*}
    Since the trace $g$ is also real valued, its Fourier modes will satisfy
	$\ds g_{-n}= \ol{g}_{n}, $ for $ n \geq 0$.
	From the non-positive even modes, we build the sequence
	\begin{align}\label{gEvenNozero_mOdd_NART}
		\bg^{\text{even}}:=\langle {g}_{0},{g}_{-2}, g_{-4}, g_{-6},... \rangle.
	\end{align}
	For each $k \geq 1 $, we use the odd modes $\{ g_{-1},g_{-3}, g_{-5}, ... \}$ to build the sequence
	\begin{align}\label{gM_mOdd_NART}
		\bg^{2k-1}:= \langle g_{2k-1}, g_{2k-3},....,g_{1}, g_{-1},g_{-3}, g_{-5}, ... \rangle
	\end{align}
	by augmenting the negative odd indices by $k$-many terms in the order $ \ds \underbrace{g_{2k-1}, g_{2k-3},....,g_{1}}_{k}. $
	
	Similar to the non-attenuated even $m$-tensor case before, we will construct the solution $u$ of the transport equation \eqref{NART_mOddTEQ}, whose trace matches the boundary data $g$, and also construct the right hand side of the \eqref{NART_mOddTEQ}.
	The construction of solution $u$ is in terms of its Fourier modes in the angular variable. 
	Except for non-positive modes $u_{0}, u_{-2}, \cdots, u_{-2q}$, all non-positive modes are defined by Bukhgeim-Cauchy integral formula in \eqref{BukhgeimCauchyFormula} using boundary data. 
	Other than having the traces $u_{-2j} \big \lvert_{\Gam} = g_{-2j}, \; 0 \leq j \leq q, \; q \geq 0$, on the boundary, the $q+1$ many Fourier modes $u_{-2j}, \; 0 \leq j \leq q, \; q\geq 0$, are unconstrained. They are chosen arbitrarily from the class of functions
	\begin{align}\nonumber 
		\Psi_{g}^{\text{odd}}:=
		&\left \{ \left(\psi_0,  \psi_{-2},  \cdots ,\psi_{-2q}\right) \in 
		C^{1,\mu}(\ol\OM ;\BR)\times \left(C^{1,\mu}(\ol\OM ;\BC)\right)^{q} : 2 \mu >1:
		\right. \\  \label{NART_mOddPsiClass}
		&\left. \quad \vphantom{\int} 
		\psi_{-2j} \big \lvert_{\Gam}= g_{-2j}, \; 0 \leq j \leq q, \; q\geq 0 \right\}.
	\end{align} 
	
	\begin{remark}
		In the 1-tensor case ($m = 1$), only Fourier mode $u_0$ be an \emph{arbitrary} function in $C^1(\OM)\cap C(\ol\OM)$ with
		$u_0|_{\Gam}=g_0.$
		The arbitrariness of $u_0$ characterizes the non-uniqueness (up to the gradient field of a function which vanishes at the boundary) in the reconstruction of a vector field from its Doppler data.
	\end{remark}

	\begin{theorem}[Range characterization for odd tensors.]\label{NARTmOddTensor}
		Let $\bbf  \in C_{0}^{1,\mu}(\mathbf{S}^m ; \OM)$,  $\mu > 1/2$, 
		be a  real-valued symmetric  tensor field of odd order $m = 2q+1, \, q\geq 0$, and  
			$$g= X \bbf \text{ on }\Gam_+ \mbox{ and  }g= 0 \mbox{ on }\Gam_- \cup \Gam_{0}.$$
		Then  $\bg^{\emph{even}}, \bg^{2k-1} \in l^{1,1}_{\INF}(\Gamma)\cap C^\mu(\Gamma;l_1)$   for $k \geq 1$, and satisfy
		\begin{align}\label{RTmOddTensorCondEven}
			&[I+\i\HT]L^{\frac{m+1}{2}}\bg^{\emph{even}} = {\bf {0}}, \\ \label{RTmOddTensorCondOdd}
			&[I+\i\HT]\bg^{2k-1} = {\bf {0}}, \quad \forall k \geq 1,
		\end{align} 
	    where $\bg^{\emph{even}}$ is the sequence in     \eqref{gEvenOdd_mEven}, 
	    $\bg^{2k-1}$ for $k \geq 1$ is the sequence in \eqref{gM_mOdd_NART}, and $\HT$ is the Bukhgeim-Hilbert operator in \eqref{BHtransform}.
		
		(ii) Let $g\in C^{\mu} \left(\Gam; C^{1,\mu}(\sph) \right)\cap C(\Gam;C^{2,\mu}(\sph))$ be real valued with $g \lvert_{\Gam_{-} \cup \Gam_{0}}=0$.
		If the corresponding sequence $\bg^{\emph{even}} \in Y_{\mu} (\Gam)$ satisfies \eqref{RTmOddTensorCondEven}, $\bg^{2k-1} \in Y_{\mu} (\Gam)$ for $k \geq 1$, satisfies \eqref{RTmOddTensorCondOdd},
		and for each element $\left(\psi_0,  \psi_{-2},  \cdots ,\psi_{-2q}\right)  \in \Psi_{g}^{\emph{odd}}$, then there is a unique real valued symmetric $m$-tensor $\bbf_{\Psi} \in C^\mu(\mathbf{S}^m; \OM)  $ such that
		$g\lvert_{\Gam_{+}} = X\bbf_{\psi}$.
	\end{theorem}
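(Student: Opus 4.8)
The plan is to transcribe the architecture of the even-order proof of Theorem \ref{RTmEvenTensor}, interchanging the roles of the even and odd Fourier modes of $u$ and, crucially, enforcing the reality of the solution through Lemma \ref{UconjLemma_mOdd_NART} rather than by termwise conjugation. The structural difference is already visible in \eqref{NART_mOdd_fmEq}--\eqref{NARTmOddAnalyticEq_Odd}: the odd modes now satisfy a \emph{source-free} recursion \eqref{NARTmOddAnalyticEq_Odd} whose $n=0$ instance $\ol{\del}u_{1}+\del u_{-1}=0$ couples a positive mode directly to a negative one, while the unconstrained (free) modes are the even ones $u_{0},u_{-2},\dots,u_{-2q}$ furnished by the class $\Psi_{g}^{\text{odd}}$ in \eqref{NART_mOddPsiClass}.

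For necessity, $g$ is the trace of the real-valued solution $u\in C^{1,\mu}(\ol\OM\times\sph)$ of \eqref{NART_mOddTEQ}, so $u_{-n}=\ol{u_{n}}$ and the modes obey \eqref{NART_mOdd_fmEq}--\eqref{NARTmOddAnalyticEq_Odd}. Equation \eqref{NARTmOddAnalyticEq_Even} makes the tail $\langle u_{-(2q+2)},u_{-(2q+4)},\dots\rangle=L^{q+1}\bu^{\text{even}}$ an $L$-analytic map, and the necessity half of Theorem \ref{NecSuf_BukhgeimHilbert_Thm} gives \eqref{RTmOddTensorCondEven}. Reality together with \eqref{NARTmOddAnalyticEq_Odd} makes the full odd chain analytic, hence each augmented sequence $\bu^{2k-1}$ of \eqref{uM_mOdd_NART} is $L$-analytic; Theorem \ref{NecSuf_BukhgeimHilbert_Thm} then yields \eqref{RTmOddTensorCondOdd} for every $k\ge1$. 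The space memberships follow from Proposition \ref{functoseq_regularityprop}(i).

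For sufficiency I would build $u$ mode by mode. Set $u_{-2j}:=\psi_{-2j}$ ($0\le j\le q$) from the chosen class element, so $u_{-2j}|_{\Gam}=g_{-2j}$; define $\langle u_{-(2q+2)},u_{-(2q+4)},\dots\rangle:=\B\,L^{q+1}\bg^{\text{even}}$, which by \eqref{RTmOddTensorCondEven} and Theorem \ref{NecSuf_BukhgeimHilbert_Thm} is $L$-analytic with the correct traces, and take positive even modes by conjugation (consistent since $\psi_{0}$ is real and $f_{-1}=\ol{f_{1}}$). For the odd modes set $\bu^{2k-1}:=\B\,\bg^{2k-1}$ for each $k\ge1$; condition \eqref{RTmOddTensorCondOdd} makes each $L$-analytic. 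Since the Bukhgeim--Cauchy kernel \eqref{BukhgeimCauchyFormula} at a given index uses only the data from that index toward the tail, the negative parts of the $\bu^{2k-1}$ agree and each leading entry $u_{2k-1}$ is unambiguously defined; the reality $g_{2k-1}=\ol{g}_{-(2k-1)}$ is exactly the boundary hypothesis \eqref{uTraceConj_mOdd_NART}, so Lemma \ref{UconjLemma_mOdd_NART} promotes it to $u_{2k-1}=\ol{u_{-(2k-1)}}$ on all of $\OM$. I would then define $f_{2n+1}:=\ol{\del}u_{-2n}+\del u_{-(2n+2)}$ for $0\le n\le q$ (using the $\psi$'s for $n<q$ and the constructed mode $u_{-(2q+2)}$ for $n=q$) and $f_{-(2n+1)}:=\ol{f_{2n+1}}$, pass to $\bbf_{\Psi}\in C^{\mu}(\mathbf{S}^{m};\OM)$ via \eqref{eq:Oddmodes_fk}, \eqref{reality_cond}, \eqref{eq:tilde_fk}, and finally assemble $u=\sum_{n}u_{n}e^{\i n\tta}$ and verify by termwise differentiation---justified by the $C^{1,\mu}$ regularity from Proposition \ref{functoseq_regularityprop}(ii)---that $\btheta\cdot\nabla u=\langle\bbf_{\Psi},\btheta^{m}\rangle$ and $u|_{\Gam}=g$.

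The hard part is the reality of the odd chain. One cannot simply set $u_{2k-1}:=\ol{u_{-(2k-1)}}$ and keep analyticity, because the top coupling $\ol{\del}u_{1}+\del u_{-1}=0$ would force $\re(\del u_{-1})=0$, which fails generically. The remedy is to reconstruct the positive odd modes as authentic Bukhgeim--Cauchy extensions inside the augmented sequences, letting the \emph{infinite} family \eqref{RTmOddTensorCondOdd} together with Lemma \ref{UconjLemma_mOdd_NART} impose the conjugation symmetry in the interior; confirming that this produces a single real, consistent, $L$-analytic odd chain is the one genuinely new point. The remaining verifications are a bookkeeping variant of the even-order argument.
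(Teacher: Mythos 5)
Your proposal is correct and follows essentially the same route as the paper's proof: the identical necessity argument (the even tail $\langle u_{-(2q+2)},u_{-(2q+4)},\dots\rangle$ plus the augmented odd chains $\bu^{2k-1}$ obtained from reality), and the identical sufficiency construction --- even modes from $\Psi_{g}^{\text{odd}}$, even tail from $\B L^{q+1}\bg^{\text{even}}$, odd chains $\bu^{2k-1}:=\B\bg^{2k-1}$ whose interior conjugation identities $u_{2k-1}=\ol{u_{-(2k-1)}}$ come from the full infinite family \eqref{RTmOddTensorCondOdd} via Lemma \ref{UconjLemma_mOdd_NART}, and the same formulas $f_{2q+1}=\ol{\del}\psi_{-2q}+\del u_{-(2q+2)}$, $f_{2n+1}=\ol{\del}\psi_{-2n}+\del\psi_{-(2n+2)}$. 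The only cosmetic differences are that you justify the consistency $L\bu^{2k-1}=\bu^{2k-3}$ by the shift structure of the Bukhgeim--Cauchy kernel where the paper invokes uniqueness of $L$-analytic maps with prescribed trace (both valid), and your remark that $\re(\del u_{-1})=0$ ``fails generically'' is slightly off --- under the theorem's hypotheses it does hold in the end; the point, as the paper states, is only that it cannot be imposed by definition but must be derived.
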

	
	\begin{proof} (i) {\bf Necessity:}
		Let $\bbf =(f_{i_1 \cdots i_m})  \in  C_{0}^{1,\mu}(\mathbf{S}^m;\OM)$.
		Since all components $f_{i_1 \cdots i_m}\in C_{0}^{1,\mu}(\OM)$, 
		$X \bbf \in C^{1,\mu}(\Gam_+)$, and, thus, the solution $u$ to the transport equation \eqref{NART_mOddTEQ} is in $C^{1,\mu}(\ol{\OM} \times \sph)$. Moreover, its trace $g=u \lvert_{\Gam \times \sph} \in C^{1,\mu}(\Gam \times \sph)$.
		By \cite[Proposition 4.1]{sadiqTamasan01} $\bg^{\text{even}}, \bg^{2k-1} \in l^{1,1}_{\INF}(\Gamma)\cap C^\mu(\Gamma;l_1)$ for all $k \geq 1$.
		
		If $u$ solves \eqref{NART_mOddTEQ} then its Fourier modes satisfy 
		\eqref{NART_mOdd_fmEq}, \eqref{NARTmOddAnalyticEq_Even}, and \eqref{NARTmOddAnalyticEq_Odd}. Since the negative even Fourier modes $u_{-2n}$ for $n \geq \frac{m+1}{2}$, satisfies the system \eqref{NARTmOddAnalyticEq_Even}, then
		the sequence valued map
		$$\OM \ni z\mapsto  \langle u_{-(m+1)}(z), u_{-(m+3)}(z), u_{-(m+5)}(z), \cdots \rangle$$
		is $L$-analytic in $\OM$ and the necessity part in Theorem  \ref{NecSuf_BukhgeimHilbert_Thm} yields the condition \eqref{RTmOddTensorCondEven}.
		
		The system \eqref{NARTmOddAnalyticEq_Odd}  
		yield that the sequence valued map
		\begin{align*}
		\OM \ni	z \mapsto \bu^{1}(z) := \langle u_{1}(z), u_{-1}(z), u_{-3}(z) \cdots \rangle
		\end{align*}
		is $L$-analytic in $\OM$ with the trace satisfying $\ds 			u_{2k-1} \lvert_{\Gam} = g_{2k-1}$, for all $ k\leq 1$. \\
		By  Theorem  \ref{NecSuf_BukhgeimHilbert_Thm} necessity part, the sequence $\bg^{1}=\langle g_{1}, g_{-1},g_{-3}, ... \rangle$ must satisfy
		\begin{align*}
			[I+\i\HT]\bg^1 =\bzero.
		\end{align*}
		
		Recall that $u$ is real valued so that its Fourier modes occur in conjugates $u_{n} = \ol{u_{-n}}$ for all $n \geq 0$.
		Consider now the equation \eqref{NARTmOddAnalyticEq_Odd} for $n= 1$ and take its conjugate to yield
		\begin{align}\label{delu3Eq_mOddTEQ}
			\ol{\del}u_{3}+ \del u_{1}=0.
		\end{align} 
	    Equation \eqref{delu3Eq_mOddTEQ} together with \eqref{NARTmOddAnalyticEq_Odd} yield that the sequence valued map
		\begin{align*}
		 \OM \ni	z \mapsto \bu^{3}(z) := \langle u_{3}(z), u_{1}(z), u_{-1}(z), u_{-3}(z) \cdots \rangle
		\end{align*}
		is $L$-analytic in $\OM$ with the trace satisfying 
		$\ds 	u_{2k-1} \lvert_{\Gam} = g_{2k-1}$ for all $ k\leq 2$. \\
		By the necessity part in Theorem  \ref{NecSuf_BukhgeimHilbert_Thm}, it must be that $\bg^{3}=\langle g_{3},g_{1}, g_{-1},g_{-3}, ... \rangle$ satisfies
		\begin{align*}
			[I+\i\HT]\bg^3 =\bzero.
		\end{align*}
		Inductively, the argument above holds for any odd index $2k-1$ to yield that the sequence 
		\begin{align*}
			\OM \ni z \mapsto \bu^{2k-1}(z) := \langle u_{2k-1}(z),u_{2k-3}(z),...,u_{1}(z), u_{-1}(z), u_{-3}(z) \cdots \rangle
		\end{align*}
		is $L$-analytic in $\OM$. Then, again by the necessity part in Theorem \ref{NecSuf_BukhgeimHilbert_Thm}, its trace $\bu^{2k-1} \lvert_{\Gam} = \bg^{2k-1}$ must satisfy the condition 
		\eqref{RTmOddTensorCondOdd}:
		\begin{align*}
			[I+\i\HT]\bg^{2k-1} =\bzero, \quad \text{ for all } \; k \geq 1.
		\end{align*}
		
		(ii) {\bf Sufficiency:}
			Let $g\in C^{\mu} \left(\Gam; C^{1,\mu}(\sph) \right)\cap C(\Gam;C^{2,\mu}(\sph))$  be real valued with $g \lvert_{\Gam_{-} \cup \Gam_{0}}=0$.
		Since $g$ is real valued, its Fourier modes in the angular variable occurs in conjugates
		\begin{align}\label{reality_g1}
			g_{-n}(\zeta) = \ol{g}_{n}(\zeta), \quad \text{ for }  \; n \geq 0, \; \zeta \in \Gam.
		\end{align}
		Let the corresponding sequences  $\bg^{\text{even}}$  satisfying \eqref{RTmEvenTensorCondEven} and $\bg^{\text{odd}}$  satisfying \eqref{RTmEvenTensorCondOdd}. 
		By Proposition (\ref{functoseq_regularityprop}), $\bg^{\text{even}}, \bg^{\text{odd}}\in Y_{\mu}(\Gam)$.
		
		Let $m =2q+1, \; q\geq 0$, be an odd integer. 
		To prove the sufficiency we will construct a real valued symmetric $m$-tensor $\bbf$ in $\OM$ and a real valued function $u \in C^{1}(\OM\times \sph)\cap C(\ol\OM \times \sph)$
		such that $u \lvert_{\Gam \times \sph}=g$ and $u$ solves \eqref{NART_mOddTEQ} in $\OM$.
		The construction of such $u$ is in terms of its Fourier modes in the angular variable and it is done in several steps.
		
		{\bf Step 1: The construction of  even modes $u_{2n}$ for $|n| \geq 2q+1, \; q\geq 0$.}
			
		Apply the Bukhgeim-Cauchy integral formula \eqref{BukhgeimCauchyFormula} to
		construct the negative even Fourier modes:
		\begin{align}\label{construction_EVENS_mOdd_NART}
			\langle u_{-2(q+1)}, u_{-2(q+2)}, u_{-2(q+3)}, ..., \rangle := \B L^{q+1}\bg^{\text{even}}.
		\end{align}
		By Theorem \ref{NecSuf_BukhgeimHilbert_Thm}, the sequence valued map
		\begin{align*}
			\OM \ni z\mapsto \langle u_{-2(q+1)}(z), u_{-2(q+2)}(z), u_{-2(q+3)}(z),  ... \rangle \in C^{1,\mu}(\OM;l_1)\cap C^{\mu}(\ol \OM;l_1),
		\end{align*}is $L$-analytic in $\OM$, thus the equations
		\begin{align}\label{uEvenL1_mOddNART}
			\ol{\del} u_{-2n} + \del u_{-(2n+2)} = 0, 
		\end{align} are satisfied for all $n \geq q+1, \, q \geq 0$.
		Moreover, the hypothesis \eqref{RTmOddTensorCondEven} and the sufficiency part of Theorem \ref{NecSuf_BukhgeimHilbert_Thm} yields that they extend continuously to $\Gam$ and
		\begin{align}\label{trace_uEven_mOddNART}
			u_{-2n}|_\Gam=g_{-2n},\quad n \geq q+1, \, q \geq 0.
		\end{align}
		
		Construct the positive even Fourier modes by conjugation:
		$ \ds u_{2n}:=\ol{u_{-2n}}$, for all $n \geq q+1, \, q \geq 0$.
		
	By conjugating \eqref{uEvenL1_mOddNART} we note that the positive even Fourier modes also satisfy
		\begin{align}\label{uEvenL1+_mOddNART}
			\ol{\del} u_{2n+2} + \del u_{2n} = 0,\quad n \geq q+1, \, q \geq 0.
		\end{align}Moreover, by reality of $g$ in \eqref{reality_g1}, they extend continuously to $\Gam$ and
		\begin{align}\label{trace_uposEven_mOddNART}
			u_{2n}|_\Gam=\ol{u_{-2n}}|_\Gam=\ol{g_{-2n}}=g_{2n},\quad n \geq q+1, \, q \geq 0.
		\end{align}

		{\bf Step 2: The construction of even modes $u_{2n}$,  
			for $|n| \leq 2q, \; q\geq 0$.}
		
		Recall the non-uniqueness class $\Psi_{g}^{\text{odd}}$ in \eqref{NART_mOddPsiClass}. 
		
		For $\left(\psi_0,  \psi_{-2},  \cdots ,\psi_{-2q}\right) \in \Psi_{g}^{\text{odd}}$ arbitrary, define 
		the modes $u_0, u_{\pm 2}, u_{\pm 4}, ..., u_{\pm 2q}$ in $\OM$ by
		\begin{equation}\label{NART_mOdd_uPsi}
			\begin{aligned}
				u_{-2n} &:= \psi_{-2n}, \quad \text{and} \quad u_{2n} := \ol{\psi_{-2n}}, \quad 0 \leq n \leq q.
			\end{aligned}
		\end{equation}
		
		By the definition of the class \eqref{NART_mOddPsiClass}, and reality of $g$ in \eqref{reality_g1}, we have
		\begin{align}\label{u1trace_mOddNART}
			u_{2n} \lvert_{\Gam}= \ol{g_{-2n}}= g_{2n}, \quad 0 \leq n \leq q.
		\end{align}
		
		{\bf Step 3: The construction of negative modes $u_{2n-1}$ for $n \in \BZ$.}
		
		Use the Bukhgeim-Cauchy Integral formula \eqref{BukhgeimCauchyFormula} to 
		construct the negative odd Fourier modes:
		\begin{align}\label{constructionODDSnegative_mOddNART}
			\langle u_{-1}(z), u_{-3}(z), u_{-5}(z), ... \rangle := \B \bg^{\text{odd}}(z), \quad z\in \OM.
		\end{align}
		By Theorem \ref{NecSuf_BukhgeimHilbert_Thm}, the sequence valued map
		\begin{align*}
			\OM \ni	z\mapsto \langle u_{-1}(z), u_{-3}(z), u_{-5}(z), ... \rangle \in C^{1,\mu}(\OM;l_1)\cap C^{\mu}(\ol \OM;l_1),
		\end{align*}is $L$-analytic in $\OM$, thus the equations
		\begin{align} \label{negODDS_mOddNART}
			\ol{\del} u_{-2n-1} + \del u_{-2n-3} = 0,
		\end{align} are satisfied for all $n \geq 0$.
		
		%
		Note that $L \bg^1=\bg^{\text{odd}}$. By hypothesis \eqref{RTmOddTensorCondOdd}, $[I+\i\HT] \bg^1 =\bzero$. Since $\HT$ commutes with the left translation $L$, then
		\begin{align*}
			\bzero=L[I+\i\HT] \bg^1 =[I+\i\HT]L \bg^1 =[I+\i\HT] \bg^{\text{odd}}.
		\end{align*} By applying Theorem \ref{NecSuf_BukhgeimHilbert_Thm}  sufficiency part,  
		we have that each $u_{2n-1}$ extends continuously to $\Gamma$:
		\begin{align*}
			u_{-2n-1}|_\Gam=g_{-2n-1},\quad n\geq 1.
		\end{align*}
		
		If we were to define the positive odd index modes by conjugating the negative ones (as we did for the non-attenuated even tensor case) it would not be clear why
		the equation \eqref{NARTmOddAnalyticEq_Odd} for $n=0$:
		\begin{align*}
			&\ol{\del} u_{1} + \del u_{-1} = 0, 
		\end{align*} should hold. To solve this problem we will define the positive odd modes by using the Bukhgeim-Cauchy integral formula \eqref{BukhgeimCauchyFormula} inductively.
		
		Let $\bu^1=\langle u_{1}, u^1_{-1}, u^1_{-3},  \cdots \rangle$ be the $L$-analytic map defined by
		\begin{align}\label{construct_u1_mOddNART}
			\bu^{1}:= \B \bg^1.
		\end{align}
		The hypothesis \eqref{RTmOddTensorCondOdd} for $k=1$,
		\begin{align*}
			[I+\i\HT] \bg^1 =\bzero,
		\end{align*} allows us to apply the sufficiency part of Theorem \ref{NecSuf_BukhgeimHilbert_Thm} to yield that $\bu^1$ extends continuously to $\Gam$ and has trace $\bg^{1}$ on $\Gam$. However, $L \bu^1 = \bu^{\text{odd}}$ is also $L$-analytic with the same trace $\bg^{\text{odd}}$ as $\bu^{\text{odd}}$. By the uniqueness of $L$-analytic maps with the given trace we must have the equality
		$$\langle  u^1_{-1}, u^1_{-3},  \cdots \rangle = \langle u_{-1}, u_{-3},  \cdots \rangle.$$
		In other words the formula \eqref{construct_u1_mOddNART} constructs only one new function $u_1$ and recovers the previously defined negative odd functions $u_{-1}, u_{-3},...$. In particular $\bu^1=\langle u_{1}, u_{-1}, u_{-3},  \cdots \rangle$ is $L$-analytic, and the equation $\ds \ol{\del}u_1+ \del u_{-1} =0$ 
		holds in $\OM$. We stress here that, at this stage, we do not know that $u_1$ is the complex conjugate of $u_{-1}$. 
		
		Inductively, for $k\geq 1$, the formula
		\begin{align}\label{define_u^2k-1_mOddNART}
			\bu^{2k-1}= \langle u_{2k-1}, u^{2k-1}_{2k-3}, ..., u^{2k-1}_1, u^{2k-1}_{-1}, \cdots \rangle:=\B \bg^{2k-1}
		\end{align} defines a sequence $\{\bu^{2k-1}\}_{k\geq1}$ of $L$-analytic maps with $\bu^{2k-1}\lvert_{\Gam} = \bg^{2k-1}$.
		By the uniqueness of $L$-analytic maps with the given trace, a similar reasoning as above shows
		\begin{align*}
			L \bu^{2k-1} = \bu^{2k-3}, \quad \forall k \geq 2.
		\end{align*}In particular for all $k \geq 1$, the sequence 
		\begin{align*}
			\bu^{2k-1} = \langle u_{2k-1}, u_{2k-3}, ..., u_1, u_{-1}, \cdots \rangle
		\end{align*} is $L$-analytic. 	
		Note that the sequence $\{\bu^{2k-1}\}_{k\geq 1}$ constructed above satisfies the hypotheses of the Lemma \ref{UconjLemma_mOdd_NART}, and therefore
		for each $k \geq 1$,
		\begin{align}\label{exceptional_mOddNART}
			u_{2k-1}(z) = \ol{u}_{-(2k-1)}(z), \quad z \in \OM.
		\end{align}
		We stress here that the identities \eqref{exceptional_mOddNART} need the hypothesis \eqref{RTmOddTensorCondOdd} for all $k\geq 1$, cannot be inferred directly from the Bukhgeim-Cauchy integral formula \eqref{BukhgeimCauchyFormula} for finitely many $k$'s.
		
		We have shown that
		\begin{equation}\label{PostiveOdds_mOddNART}
			\begin{aligned}
				\ol{\del} u_{2n-1} + \del{u_{2n-3}} &= 0, \quad \text{and} \quad u_{2n-1}|_\Gamma = g_{2n-1}, \quad \forall n\in \mathbb{Z}.
			\end{aligned}
		\end{equation}

		{\bf Step 4: The construction of the tensor field $\bbf_{\psi} $ whose $X$-ray data is $g$.}
		
		The components of the $m$-tensor $\bbf_{\Psi} $ are defined via the one-to-one correspondence between  the pseudovectors $\langle \tilde{f}_0, \tilde{f}_{1}, \cdots , \tilde{f}_{m} \rangle $  and the functions  $\{f_{\pm(2n+1)}:\; 0\leq n\leq q\}$ as follows. 
		
		For $q\geq 0$, we define $f_{2q+1}$  by using $\psi_{-2q}$ from the non-uniqueness class in \eqref{NART_mOddPsiClass}, and Fourier mode $u_{-(2q+2)}$ from the Bukhgeim-Cauchy formula \eqref{construction_EVENS_mOdd_NART}. Then, define $\{f_{2n+1}:\; 0 \leq n \leq q-1\}$ solely from the information in the non-uniqueness class. Finally, define $\{f_{-(2n+1)}:\; 0\leq n\leq q\}$ by conjugation. 
		\begin{equation} \label{NART_mOdd_fmPsiEq}
			\begin{aligned} 
				&f_{2q+1}:=\ol{\del} \psi_{-2q} + \del u_{-(2q+2)},   && q\geq 0,\\
				&f_{2n+1}:=\ol{\del} \psi_{-2n} + \del \psi_{-(2n+2)}, && 0 \leq n \leq q-1,\; q\geq 1, \quad \text{and} \\
				&f_{-(2n+1)}:=\ol{f_{2n+1}}, &&0 \leq n \leq q, \; q\geq 0,
			\end{aligned}
		\end{equation}
		By construction, $f_{\pm(2n+1)}\in C^\mu(\OM)$, for $0\leq n\leq q$, as $\psi_{0},\psi_{-2}, \cdots,\psi_{-2q}\in C^{1,\mu}(\OM)$.
		We use these Fourier modes $ f_{\pm 1}, f_{\pm 3}, \cdots , f_{\pm m}$ for $m =2q+1, \; q\geq 0$, and equations \eqref{eq:Oddmodes_fk},  ~\eqref{reality_cond} and \eqref{eq:tilde_fk} to construct the 
		pseudovectors $\langle \tilde{f}_0, \tilde{f}_{1}, \cdots , \tilde{f}_{m} \rangle $, and thus the $m$-tensor field $\bbf_{\Psi}  \in C^\mu(\mathbf{S}^m; \OM)  $.

		In order to show $g \lvert_{\Gam_{+}} = X\bbf_{\Psi}$ with $\bbf_{\Psi}$ being constructed from pseudovectors via  Fourier modes as in \eqref{NART_mOdd_fmPsiEq} from class $\Psi_{g}^{\text{odd}}$, 
		we define the real valued function $u$ via its Fourier modes\\
		\begin{equation}\label{NART_mOdd_defnUPsi1}
			\begin{aligned}
				\footnotesize u(z, \btheta):= \sum_{n = -\INF}^{\INF} u_{2n-1}(z)e^{\i (2n-1)\tta} +\sum_{|n| \geq q+1} u_{2n}(z)e^{\i 2n\tta} 
				+ \sum_{n=0}^{q} \psi_{-2n}(z)e^{ -\i 2n\tta} + \sum_{n = 0}^{q} \ol{\psi}_{-2n}(z)e^{\i 2n \tta}. 
			\end{aligned} 
		\end{equation} 
		Since $g\in C^{\mu} \left(\Gam; C^{1,\mu}(\sph) \right)\cap C(\Gam;C^{2,\mu}(\sph))$, we use Proposition \ref{functoseq_regularityprop} (ii) and \cite[Proposition 4.1 (iii)]{sadiqTamasan01} to conclude that $u$ defined in \eqref{NART_mOdd_defnUPsi1} belongs to $C^{1,\mu}(\OM \times \sph)\cap C^{\mu}(\ol{\OM}\times \sph)$. 
		
		Using \eqref{trace_uEven_mOddNART}, \eqref{trace_uposEven_mOddNART}, \eqref{u1trace_mOddNART}, \eqref{PostiveOdds_mOddNART}, and element $\left(\psi_0,  \psi_{-2},  \cdots ,\psi_{-2q}\right) \in \Psi_{g}^{\text{odd}}$, the
		$u(\cdot,\btheta)$ in \eqref{NART_mOdd_defnUPsi1} extends to the boundary
		\begin{align*}
			u(\cdot,\btheta)\lvert_{\Gam} = g(\cdot,\btheta),
		\end{align*}
		
		
		Since $u\in C^{1,\mu}(\OM \times \sph)\cap C^{\mu}(\ol{\OM}\times \sph)$, 
		then the term by term differentiation in \eqref{NART_mOdd_defnUPsi1} is now justified,
		satisfying the transport equation \eqref{NART_mOddTEQ}:
		\begin{align*}
			\btheta \cdot \nabla u &= 2 \re \left \{ (\ol{\del} \psi_{-2q}+ \del u_{-(2q+2)})e^{\i (2q+1) \tta} \right\}+ 2 \re \left \{ \sum_{n = 0}^{q-1} (\ol{\del} \psi_{-2n}+ \del \psi_{-(2n+2)})e^{\i (2n+1) \tta}\right\}\\
			&=  \sum_{n=0}^{q} \left( f_{2n+1} e^{- \i (2n+1) \tta} + f_{-(2n+1)}e^{ \i (2n+1) \tta} \right) = \langle \bbf,  \btheta^{2q+1} \rangle,
		\end{align*}
		where the cancellation uses equations \eqref{uEvenL1_mOddNART}, \eqref{uEvenL1+_mOddNART}, \eqref{PostiveOdds_mOddNART},
		and the second equality uses the definition of $f_{2k+1}$'s in \eqref{NART_mOdd_fmPsiEq}.

	\end{proof}


	\section{Even order $m$-tensor - attenuated case}\label{sec:RangeART_mEven}
	Let $a\in C^{2,\mu}(\ol \OM)$, $\mu>1/2$, with $\ds \underset{\ol{\OM}}{\min}\, a > 0$. 
	We now establish necessary and sufficient conditions for a sufficiently smooth function on $\Gam \times \sph$ to be the attenuated $X$-ray data of some sufficiently smooth real valued symmetric tensor field $\bbf$ of even order $m=2q, \; q \geq 0$.
	In this case $a \neq 0$, the transport equation \eqref{TransportEq2Tensor} becomes
	\begin{align}\label{eq:Transport_fmEven}
		\btheta \cdot\nabla u(x,\btheta) + a(x) u(x,\btheta) = 
		\sum_{k=0}^{q}f_{-2k} e^{\i (2k)\tta}  + \sum_{k=1}^{q} f_{2k} e^{-\i (2k)\tta},
	\end{align} 
where $f_{2k}$ defined in \eqref{eq:Evenmodes_fk}, and $ f_{2k} = \ol{f_{-2k}}, \;$ for 
$ -q \leq k \leq q, \, q \geq 0$.  
	
	If $\ds \sum_{n \in \BZ} u_{n}(z) e^{\i n\tta}$ is the Fourier series  expansion in the angular variable $\btheta$ of a solution $u$ of \eqref{eq:Transport_fmEven}, then by identifying the Fourier coefficients of the same order, equation \eqref{eq:Transport_fmEven} reduces to the system:
	\begin{align} \label{ART_mEven_fmEq}
		&\ol{\del} u_{-(2n-1)}(z) + \del u_{-(2n+1)}(z) +a u_{-2n}(z) = f_{2n}(z), && 0 \leq n \leq q, \; q \geq 0, \\ 
		\label{eq:Beltrami_mEven_un<mby2}
		&\ol{\del} u_{-2n}(z) + \del u_{-(2n+2)}(z) +a u_{-2n-1}(z) = 0, &&  0 \leq n \leq q-1, \; q \geq 1, \\ 
		\label{BeltramiEq_mEven_un>=m}
		&\ol{\del} u_{-n}(z) + \del u_{-(n+2)}(z) +a u_{-(n+1)}(z)= 0, && n \geq 2q, \;q \geq 0.
	\end{align}

	Recall that the trace $u \lvert_{\Gam \times \sph} := g$ as in \eqref{g_trace}, with $g= X_a \bbf \text{ on }\Gam_+ \mbox{ and  }g= 0 \mbox{ on }\Gam_- \cup \Gam_{0}.$ \\
	We expand the attenuated $X$-ray data $g$ in terms of its Fourier modes in the angular variables:
	\begin{align*}
		g(\zeta,\btheta) = \sum_{n=-\infty}^{\infty} g_{n}(\zeta) e^{\i n \tta}, \quad \zeta \in \Gam.
	\end{align*}Since the trace $g$ is also real valued, its Fourier modes will satisfy
	$\ds g_{-n}= \ol{g}_{n},$ for $ n \geq 0.$
	From the negative modes, we built the sequence
	$\ds  \bg:=\langle g_{0}, {g}_{-1}, g_{-2}, g_{-3},... \rangle.$
	From the special function $h$ defined in \eqref{hDefn} and the data $g$, we built the sequence
	\begin{align*}
		\bg_h:= e^{-G} \bg:= \langle \gamma_{0}, \gamma_{-1}, \gamma_{-2}, ... \rangle,
	\end{align*} where $e^{\pm G}$ as defined in \eqref{eGop}. 
	From the negative even, respectively, negative odd Fourier modes, we built the sequences
	\begin{align}\label{gHEvenOdd}
		\bg^{\text{even}}_h = \langle \gamma_{0}, \gamma_{-2},\gamma_{-4},...\rangle, \quad \text{and} \quad
		\bg^{\text{odd}}_h = \langle \gamma_{-1}, \gamma_{-3},\gamma_{-5},...\rangle.
	\end{align}
	
	Next we characterize the attenuated $X$-ray data $g$ in terms of its Fourier modes $\underbrace{ g_{0}, g_{-1}, g_{-2}, \cdots g_{-(m-1)}}_{m}$, and the Fourier modes 
	\begin{align*}
		L^{m}\bg_h:= L^{m}e^{-G} \bg:= \langle \gamma_{-m}, \gamma_{-(m+1)}, \gamma_{-(m+2)}, ... \rangle.
	\end{align*} 

	Similar to the non-attenuated case as before, we construct simultaneously the right hand side of the transport equation \eqref{eq:Transport_fmEven} together with the solution $u$ via its Fourier  modes. 
	For $m =2q$, $q \geq 1$, apart from modes $\underbrace{ u_{0}, u_{-1}, u_{-2}, \cdots u_{-(2q-1)}}_{2q}$, all Fourier modes are constructed uniquely from the data $L^{2q}\bg_h$.
	The modes $ u_{0}, u_{-2}, u_{-4}, \cdots u_{-(2q-2)}$ will be chosen arbitrarily from the class $\Psi_{a,g}^{\text{even}}$ of cardinality $q=\frac{m}{2}$ with prescribed trace and gradient on the boundary $\Gam$ defined as
	\begin{align} \nonumber
		\Psi_{a,g}^{\text{even}}:=
		&\left \{ \left(\psi_0,  \psi_{-2},  \cdots ,\psi_{-2 (q-1)}\right) \in C^{2}(\ol\OM ;\BR)\times\left(C^{2}(\ol\OM ;\BC))\right)^{q}: 
		\right. \\  \nonumber
		&\left. \quad \vphantom{\int}
		\psi_{-2j} \big \lvert_{\Gam}= g_{-2j}, \quad 0 \leq j \leq q-1, \; q \geq 1,
		\right. \\  \nonumber
		&\left. \quad \vphantom{\int}  \ol{\del} \psi_{-2(q-1)}  \big \lvert_{\Gam} = - \del (e^G \B e^{-G} \bg )_{-2q}  \big \lvert_{\Gam}  -  a \big \lvert_{\Gam} \,  g_{-(2q-1)},  \quad q \geq 1,
		\right. \\  \label{ART_mEvenPsiClass}
		&\left. \quad \vphantom{\int} 
		\ol{\del} \psi_{-2j}  \big \lvert_{\Gam} = - \del \psi_{-(2j+2)} \big\lvert_{\Gam}  -  a \big \lvert_{\Gam} \,   g_{-(2j+1)}, \quad  0 \leq j \leq q-2, \;q \geq 2
		\right \}
	\end{align}
	where $\B$ be the Bukhgeim-Cauchy operator in  \eqref{BukhgeimCauchyFormula}, and the operators $e^{\pm G}$ as defined in \eqref{eGop}.
	
	\begin{remark}
		In the 2-tensor case ($m = 2$), apart from zeroth mode $u_0$ and negative one mode $u_{-1}$, all Fourier modes are constructed uniquely from the data $L^{2}\bg_h$.
		The mode $u_0$ will be chosen arbitrarily from the class $\Psi_{a,g}^{m=2}$. We rewrite the above class $\Psi_{a,g}^{\emph{even}}$ explicitly for $m = 2$, as
		\begin{align}\label{ART_m2PsiClass}
			\Psi_{a,g}^{m=2}:=
			&\left \{  \psi_0 \in C^{2}(\ol\OM ;\BR):  \psi_{0} \big \lvert_{\Gam}= g_{0}, 
			\quad \vphantom{\int}  \ol{\del} \psi_{0}  \big \lvert_{\Gam} = - \del (e^G \B e^{-G} \bg )_{-2}  \big \lvert_{\Gam}  -  a \lvert_{\Gam} \,  g_{-1} 
			\right \}.
		\end{align}  
		In the 0-tensor case ($m = 0$), there is no class, and the characterization of the attenuated $X$-ray data $g$ is in terms of the Fourier modes $\bg_h:= e^{-G} \bg$. 
	\end{remark}

	Next, we characterize the range for even $m =2q$, $q \geq 0$, in the attenuated case.
	\begin{theorem}[Range characterization for even order tensors ]\label{ARTmEvenTensor}
		
		Let $a \in C^{2,\mu}(\ol\OM)$, $\mu>1/2$ with $\underset{\ol{\OM}}{\min}\, a >0$. 
		(i) Let $\bbf  \in C_{0}^{1,\mu}(\mathbf{S}^m ; \OM)$,  
		be a  real-valued symmetric  tensor field of even order $m = 2q, \, q\geq 0$, and 
		$g= X_a \bbf \text{ on }\Gam_+ \mbox{ and  }g= 0 \mbox{ on }\Gam_- \cup \Gam_{0}.$	
		Then $\bg^{\emph{even}}_h,  \bg^{\emph{odd}}_h \in l^{1,1}_{\INF}(\Gamma)\cap C^\mu(\Gamma;l_1)$ satisfy
		\begin{align}\label{ARTmEvenTensorCondEvenOdd}
			&[I+\i \HT] L^{\frac{m}{2}}
			\bg^{\emph{even}}_h  =\bzero, \quad [I+\i \HT] L^{\frac{m}{2}}  \bg^{\emph{odd}}_h =\bzero. 
		\end{align} 
		where $\bg^{\text{even}}_h,  \bg^{\emph{odd}}_h$ are sequences in \eqref{gHEvenOdd}, and $\HT$ is the Bukhgeim-Hilbert operator in \eqref{BHtransform}.
		
		(ii) Let $g\in C^{\mu} \left(\Gam; C^{1,\mu}(\sph) \right)\cap C(\Gam;C^{2,\mu}(\sph))$ be real valued with $g \lvert_{\Gam_{-} \cup \Gam_{0}}=0$.
		For $q=0$, if the corresponding sequences $\bg^{\emph{even}}_h,  \bg^{\emph{odd}}_h \in Y_{\mu}(\Gam)$ satisfies \eqref{ARTmEvenTensorCondEvenOdd}, 
		then there is a unique real valued symmetric $0$-tensor $\bbf$ such that $g\lvert_{\Gam_{+}} = X_a\bbf$.
		Moreover, for $q \geq 1$, if $\bg_h^{\emph{even}}, \bg_h^{\emph{odd}}\in Y_{\mu}(\Gam)$ satisfies \eqref{ARTmEvenTensorCondEvenOdd}, and for each element $ \left(\psi_0,  \psi_{-2},  \cdots ,\psi_{-2 (q-1)}\right)   \in \Psi_{a,g}^{\emph{even}}$,
		then there is a unique real valued symmetric $m$-tensor $\bbf_{\Psi} \in C(\mathbf{S}^m; \OM)  $ such that $g\lvert_{\Gam_{+}} = X_a\bbf_{\Psi}$.
	\end{theorem}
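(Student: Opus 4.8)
The plan is to imitate the non-attenuated even case (Theorem \ref{RTmEvenTensor}), absorbing the attenuation through the integrating factors $e^{\pm G}$ of \eqref{eGop}: the decisive tool is Lemma \ref{beltrami_reduction}, which intertwines the coupled attenuated Beltrami system with the plain $L^2$-analytic equation \eqref{Analytic1}, together with the commutations $[e^{\pm G},L]=0$, $[\B,L]=0$ and the identity $e^{G}e^{-G}=I$. Throughout, the positivity $\min_{\ol{\OM}}a>0$ is used to solve the odd modes algebraically, and the regularity inputs $\bbf\in C_0^{1,\mu}(\mathbf{S}^m;\OM)$ (resp. $g\in C^\mu(\Gam;C^{1,\mu}(\sph))\cap C(\Gam;C^{2,\mu}(\sph))$) feed Proposition \ref{functoseq_regularityprop} and Proposition \ref{eGprop}.

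For necessity I would begin with the solution $u$ of \eqref{eq:Transport_fmEven}, whose non-positive Fourier modes $\bu=\langle u_0,u_{-1},u_{-2},\dots\rangle$ obey \eqref{ART_mEven_fmEq}--\eqref{BeltramiEq_mEven_un>=m}. Because the inhomogeneous modes $f_{2n}$ occur only for $n\le q$, the shifted tail $L^m\bu$ solves the homogeneous attenuated Beltrami equation, so by Lemma \ref{beltrami_reduction}(i) the sequence $e^{-G}L^m\bu$ is $L^2$-analytic; writing $\bu_h:=e^{-G}\bu$ and using $[e^{-G},L]=0$ this is $L^m\bu_h$, whose boundary trace is $L^m\bg_h$. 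Splitting an $L^2$-analytic map into its even- and odd-indexed subsequences yields two $L$-analytic maps whose traces are precisely $L^{q}\bg^{\text{even}}_h$ and $L^{q}\bg^{\text{odd}}_h$; the necessity half of Theorem \ref{NecSuf_BukhgeimHilbert_Thm} then gives \eqref{ARTmEvenTensorCondEvenOdd}. The regularity $\bg^{\text{even}}_h,\bg^{\text{odd}}_h\in l^{1,1}_{\INF}(\Gam)\cap C^\mu(\Gam;l_1)$ follows from Proposition \ref{functoseq_regularityprop} applied to $\bg$ and the boundedness of $e^{-G}$ in Proposition \ref{eGprop}.

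For sufficiency I would build $u$ mode by mode. Using \eqref{ARTmEvenTensorCondEvenOdd} and the sufficiency half of Theorem \ref{NecSuf_BukhgeimHilbert_Thm}, I first form the $L^2$-analytic map $\bv$ with $\bv\lvert_\Gam=L^m\bg_h$ by interleaving the Bukhgeim--Cauchy extensions of $L^{q}\bg^{\text{even}}_h$ and $L^{q}\bg^{\text{odd}}_h$; then $\bu^{\text{tail}}:=e^{G}\bv$ solves the attenuated Beltrami system by Lemma \ref{beltrami_reduction}(ii) and, since $e^{G}e^{-G}=I$ and $[e^{G},L]=0$, has trace $e^{G}L^m\bg_h=L^m\bg$, so that $u_{-n}\lvert_\Gam=g_{-n}$ for $n\ge m$ and in particular $u_{-2q}=(e^{G}\B e^{-G}\bg)_{-2q}$. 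Next I assign the even mid-modes $u_{-2n}:=\psi_{-2n}$, $0\le n\le q-1$, from the class \eqref{ART_mEvenPsiClass}, and define the odd mid-modes from \eqref{eq:Beltrami_mEven_un<mby2} by
\[
u_{-(2n+1)}:=-\tfrac1a\bigl(\dba u_{-2n}+\del u_{-(2n+2)}\bigr),\qquad 0\le n\le q-1,
\]
which is legitimate since $a>0$. The boundary constraints built into $\Psi_{a,g}^{\text{even}}$ are exactly the compatibility conditions forcing $u_{-(2n+1)}\lvert_\Gam=g_{-(2n+1)}$, the junction mode $u_{-(2q-1)}$ being matched through the constraint involving $(e^{G}\B e^{-G}\bg)_{-2q}$. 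Defining the positive modes by conjugation $u_n:=\ol{u_{-n}}$ makes $u$ real and, by reality of $g$, supplies the remaining traces. Finally I read off the tensor from \eqref{ART_mEven_fmEq} via
\[
f_{2n}:=\dba u_{-(2n-1)}+\del u_{-(2n+1)}+a\,u_{-2n}\ (0\le n\le q),\qquad f_{-2n}:=\ol{f_{2n}},
\]
assemble $\bbf_\Psi$ through \eqref{eq:Evenmodes_fk}, \eqref{reality_cond}, \eqref{eq:tilde_fk}, and verify $u\lvert_{\Gam\times\sph}=g$ and the transport equation \eqref{eq:Transport_fmEven} by substituting the modes back and invoking \eqref{ART_mEven_fmEq}--\eqref{BeltramiEq_mEven_un>=m}. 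The case $q=0$ carries no class and reconstructs the full sequence directly, giving uniqueness.

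The hard part is the junction bookkeeping: verifying the single identity $u_{-2q}=(e^{G}\B e^{-G}\bg)_{-2q}$ and checking that the boundary conditions in \eqref{ART_mEvenPsiClass} are exactly the compatibility relations making every mid-mode trace correct. This hinges on the commutations $[\B,L]=[e^{\pm G},L]=0$ and on $e^{G}e^{-G}=I$, which must be applied with care to relate the $0$-th tail mode to the $(-2q)$-th component appearing in the class. A secondary difficulty is the regularity accounting: dividing by $a$ and differentiating the $C^2$ class elements costs one derivative, so the $f_{2n}$ are only continuous---hence $\bbf_\Psi\in C(\mathbf{S}^m;\OM)$ rather than $C^\mu$---and one must still check through Proposition \ref{functoseq_regularityprop}(ii) and Proposition \ref{eGprop} that the full series for $u$ converges in $C^{1}(\OM\times\sph)\cap C(\ol{\OM}\times\sph)$ so that term-by-term differentiation is justified. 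Uniqueness of the tail as an $L^2$-analytic map with prescribed trace then pins down the construction once the class element is fixed.
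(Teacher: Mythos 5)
Your proposal is correct and takes essentially the same route as the paper: necessity via Lemma \ref{beltrami_reduction} and $[e^{\pm G},L]=0$ applied to the tail $L^m\bu$, followed by splitting into even/odd $L$-analytic subsequences and Theorem \ref{NecSuf_BukhgeimHilbert_Thm}; sufficiency via the interleaved Bukhgeim--Cauchy extensions conjugated by $e^{G}$, the class $\Psi_{a,g}^{\text{even}}$ for the even mid-modes, algebraic solution of the odd mid-modes using $a>0$, and reading off the tensor from the mode equations. Your compact formulas $u_{-(2n+1)}=-\frac{1}{a}\left(\dba u_{-2n}+\del u_{-(2n+2)}\right)$ and $f_{2n}=\dba u_{-(2n-1)}+\del u_{-(2n+1)}+a\,u_{-2n}$ are exactly the paper's \eqref{defn_u_oneminusm}, \eqref{defn_uminus_2j+1} and \eqref{ART_mEven_fmPsiEq} written without substitution, and your regularity accounting ($\bbf_\Psi\in C(\mathbf{S}^m;\OM)$ only) matches the paper's.
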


	\begin{proof}
		(i) {\bf Necessity:} Let $\bbf =(f_{i_1 \cdots i_m})  \in  C_{0}^{1,\mu}(\mathbf{S}^m;\OM)$.
		Since all components $f_{i_1 \cdots i_m}\in C_{0}^{1,\mu}(\OM)$ are compactly supported inside $\OM$, then for any point at the boundary there is a cone of lines which do not meet the support.
		Thus $g \equiv 0$ in the neighborhood of the variety $\Gam_0$  which yields $g \in C^{1,\mu}(\Gam \times \sph)$. Moreover, $g$ is the trace on $\Gam \times \sph$ of a solution $u \in C^{1,\mu}(\ol{\OM} \times \sph)$ of the transport equation
		\eqref{eq:Transport_fmEven}.
		By Proposition \ref{functoseq_regularityprop}(i) and Proposition \ref{eGprop}, 
		$\bg_h= e^{-G} \bg \in l^{1,1}_{\INF}(\Gamma)\cap C^\mu(\Gamma;l_1)$. 
				
		If $u$ solves \eqref{eq:Transport_fmEven} then its Fourier modes satisfies \eqref{ART_mEven_fmEq}, \eqref{eq:Beltrami_mEven_un<mby2} and \eqref{BeltramiEq_mEven_un>=m}.
		In particular, the sequence valued map $\ds  \bu:=\langle u_{0}, u_{-1}, u_{-2},  \cdots \rangle, $  satisfies 
		$\ds \dba L^m \bu +L^2 \del L^m \bu+ aL^{m+1}\bu = 0$. 
		
		Let $\ds \bv := e^{-G} L^m \bu $, then by Lemma \ref{beltrami_reduction}, and the fact that the operators $e^{\pm G}$ commute with the left translation, $ [e^{\pm G}, L]=0$, 
		the sequence $\ds \bv = L^m e^{-G}  \bu$ solves $\dba \bv + L^2\del \bv =0$, i.e $\bv$ is $L^2$ analytic.
		Thus, the negative even subsequence $\langle v_{0}, v_{-2},   \cdots \rangle$, and negative odd subsequence $\langle v_{-1}, v_{-3},  \cdots \rangle$, respectively, are $L$ analytic, with traces 
		$L^{\frac{m}{2}}\bg_{h}^{\text{even}}$, respectively, $L^{\frac{m}{2}}\bg_{h}^{\text{odd}}$.
		The necessity part in Theorem  \ref{NecSuf_BukhgeimHilbert_Thm} yields \eqref{ARTmEvenTensorCondEvenOdd}:
		\begin{align*}
			&[I+\i\HT] L^{\frac{m}{2}} \bg_{h}^{\text{even}} =\bzero, \quad [I+\i\HT] L^{\frac{m}{2}} \bg_{h}^{\text{odd}} =\bzero. 
		\end{align*} This proves part (i) of the theorem.
		
		(ii) {\bf Sufficiency:} 
		Let $g\in C^{\mu} \left(\Gam; C^{1,\mu}(\sph) \right)\cap C(\Gam;C^{2,\mu}(\sph))$ be real valued
		with $g \lvert_{\Gam_{-} \cup \Gam_{0}}=0$.
		Let the corresponding sequences $\bg_h^{\text{even}}, \bg_h^{\text{odd}}$ as in \eqref{gHEvenOdd} satisfying \eqref{ARTmEvenTensorCondEvenOdd}. By Proposition \ref{functoseq_regularityprop}(ii) and Proposition \ref{eGprop}(iii), we have $\bg_h^{\text{even}}, \bg_h^{\text{odd}}\in Y_{\mu}(\Gam)$.
		
		Let $m =2q$, $q \geq 0$, be an even integer.
		To prove the sufficiency we will construct a real valued symmetric $m$-tensor $\bbf$ in $\OM$ and a real valued function $u \in C^{1}(\OM\times \sph)\cap C(\ol\OM \times \sph)$
		such that $u \lvert_{\Gam \times \sph}=g$ and $u$ solves \eqref{eq:Transport_fmEven} in $\OM$.
		The construction of such $u$ is in terms of its Fourier modes in the angular variable and it is done in several steps.

		{\bf Step 1: The construction of modes $u_{-n}$ for $|n| \geq 2q, \; q\geq 0$.}

		Use the Bukhgeim-Cauchy Integral formula \eqref{BukhgeimCauchyFormula} to define the $L$-analytic maps
		\begin{align*}
			&\bv^{\text{even}}(z)= \langle v_{0}(z), v_{-2}(z), v_{-4}(z), ... \rangle:= \B L^{q}\bg_{h}^{\text{even}}(z), \quad z\in \OM,\\
			&\bv^{\text{odd}}(z)= \langle v_{-1}(z), v_{-3}(z), v_{-5}(z), ... \rangle:= \B L^{q}\bg_{h}^{\text{odd}}(z),\quad z\in \OM.
		\end{align*}By intertwining the above $L$-analytic maps, define also the $L^2$-analytic map
		\begin{align*}
			\bv(z):=\langle v_{0}(z), v_{-1}(z), v_{-2}(z),v_{-3}(z), ... \rangle , \quad z\in \OM.
		\end{align*}
		By Theorem \ref{NecSuf_BukhgeimHilbert_Thm} (ii),
		\begin{align}\label{smothness__v_j}
			\bv, \bv^{\text{even}}, \bv^{\text{odd}}\in C^{1,\mu}(\OM; l_{1})\cap C^{\mu}(\ol\OM;l_1)\cap C^2(\OM;l_\infty).
		\end{align}Moreover, since $\bg_h^{\text{even}}, \bg_h^{\text{odd}}$ satisfy the hypothesis \eqref{ARTmEvenTensorCondEvenOdd}, by  Theorem  \ref{NecSuf_BukhgeimHilbert_Thm} sufficiency part, we have
		\begin{align*}
			\bv^{\text{even}} \lvert_{\Gam} = L^{q}\bg_h^{\text{even}} \quad \text{and}\quad  \bv^{\text{odd}} \lvert_{\Gam} = L^{q}\bg^{\text{odd}}_h.
		\end{align*} 
		In particular, $\bv$ is $L^2$-analytic map with trace:
		\begin{align}\label{vn_intermsof_gn}
			\bv \lvert_{\Gam} = L^{2q}\bg_h = L^{2q} e^{-G} \bg,  
		\end{align} where $\bg_h$ is formed by intertwining $\bg_h^{\text{even}} $ and $\bg_h^{\text{odd}}$.
		
		Define the sequence valued map
		\begin{align}\label{eq:LmU_eGv}
			&\OM \ni z\mapsto  L^{2q} \bu(z)=\langle u_{-2q}(z), u_{-2q-1}(z), u_{-2q-2}(z),  \cdots \rangle := e^{G} \bv (z),
		\end{align} where the operator $e^{ G}$ as defined in \eqref{eGop}.
		Since convolution preserves $l_1$, by Proposition \ref{eGprop}, 
		\begin{align}\label{eq:LmU_regularity}
			L^{2q} \bu \in C^{1,\mu}(\OM; l_{1})\cap C^{\mu}(\ol\OM;l_1).
		\end{align}
		Moreover, since $\bv\in C^2(\OM;l_\infty)$ as in \eqref{smothness__v_j}, we also conclude from convolution that $L^{2q} \bu \in C^{2}(\OM; l_\infty).$
		
		As $\bv$ is $L^2$ analytic, by Lemma \ref{beltrami_reduction}, $L^{2q} \bu$ satisfies 
		\begin{align*}
			\dba L^{2q} \bu +L^2 \del L^{2q} \bu+ aL^{2q+1}\bu = 0,
		\end{align*} which in component form is written as:
		\begin{align}\label{uL2sys_mEven_a>0}
			\ol{\del} u_{-n} + \del u_{-n-2} +a u_{-n-1} = 0, \quad n\geq 2q, \; q\geq 0.
		\end{align}
		The trace satisfy
		\begin{align}\label{Lmu_trace}
			L^{2q} \bu \lvert_{\Gam}    = e^G \bv \lvert_{\Gam} = e^G L^{2q} e^{-G} \bg = L^{2q} \bg, 
		\end{align} where the second equality follows from \eqref{vn_intermsof_gn} and in the last equality  we use the fact that the operators $e^{\pm G}$ commute with the left translation, $ [e^{\pm G}, L]=0$.

		Construct the positive Fourier modes by conjugation: 
		$ \ds u_{n}:=\ol{u_{-n}},$ for all $n\geq 2q, \; q\geq 0$. Moreover using \eqref{Lmu_trace}, the traces $u_n \lvert_{\Gam}$ for each $n \geq 2q, \, q\geq 0$, satisfy
		\begin{align}\label{Trace_un_pos_gn}
			u_{n}\lvert_{\Gam} = \ol{u_{-n}}\lvert_{\Gam}= \ol{g_{-n}}= g_{n}, \quad n \geq 2q, \; q\geq 0.
		\end{align}
		By conjugating \eqref{uL2sys_mEven_a>0} we note that the positive Fourier modes also satisfy
		\begin{align}\label{uL2sys_mEven_a>0_Pos}
			\ol{\del} u_{n+2} + \del u_{n} +a u_{n+1} = 0, \quad n\geq 2q, \; q\geq 0.
		\end{align}
		
		{\bf Step2: The construction of the tensor field $\bbf$ in the $q=0$ case.}
		
		In the case of the 0-tensor, $\bbf=f_0$, and $f_0$ is uniquely determined   from the odd Fourier mode $u_{-1}$, and the zeroth mode $u_{0}$ in  \eqref{eq:LmU_eGv}, by
		\begin{align}\label{ART_0tensor_reconstruction}
			\bbf:= 2 \re \del u_{-1} +a u_0,  \quad (\text{for }q= 0 \text{ case}).
		\end{align} 

		We consider next the case $m =2q, q \geq 1$ of tensors of order 2 or higher. 
		In this case the construction of the tensor field $\bbf_{\psi} $  is in terms of the  mode $u_{-2q}$ in \eqref{eq:LmU_eGv}  and the class $\Psi_{a,g}^{\text{even}}$ in \eqref{ART_mEvenPsiClass}.
		


		{\bf Step 3: The construction of modes $u_{n}$ for $|n| \leq 2q-1\; q \geq 1$.}
		
		Recall that $a \in C^{2,\mu}(\ol\OM)$, $\mu>1/2$ with $\underset{\ol{\OM}}{\min}\, a >0$, and  the non-uniqueness class $\Psi_{a,g}^{\text{even}}$  in \eqref{ART_mEvenPsiClass}.
		
		For $ \left(\psi_0,  \psi_{-2},  \cdots ,\psi_{-2 (q-1)}\right) \in \Psi_{a,g}^{\text{even}}$ arbitrary, define the modes $u_0, u_{\pm 2},  ..., u_{\pm (2(q-1))}$ in $\OM$ by
		\begin{equation}\label{defn_uminus_2j}
			\begin{aligned}
				u_{-2j}&:= \psi_{-2j},       \quad \text{and} \quad u_{2j} := \ol{\psi_{-2j}},  \quad 0 \leq j \leq q-1, \; q \geq 1.
			\end{aligned} 
		\end{equation}

		Using the mode $u_{-2q}$ from \eqref{eq:LmU_eGv} and $\psi_{-2(q-1)} $, define the modes $u_{\pm(2q-1)} $ by
		\begin{equation} \label{defn_u_oneminusm}
			\begin{aligned}
				u_{-(2q-1)}&:= -\frac{\ol{\del}\psi_{-2(q-1)}+\del u_{-2q}}{a}, \quad  \text{and} \quad 				u_{2q-1} := \ol{u}_{-(2q-1)}, \; \text{ for all } q \geq 1.
			\end{aligned} 
		\end{equation}
		As $\psi_0 \in C^{2}(\ol\OM ;\BR)$ and $\psi_{-(2j+2)} \in C^{2}(\ol\OM;\mathbb{C})$, for  $0 \leq j \leq q-2, \; q \geq 2$, define modes
		\begin{equation}\label{defn_uminus_2j+1}
			\begin{aligned}
				u_{-(2j+1)}&:= -\frac{\ol{\del}\psi_{-2j}+\del \psi_{-(2j+2)}}{a},\, \text{ and }  \, 				u_{2j+1} := \ol{u}_{-(2j+1)}, \; \text { for all } 0 \leq j \leq q-2, \; q \geq 2.
			\end{aligned} 
		\end{equation}
		By the construction in \eqref{defn_uminus_2j}, \eqref{defn_u_oneminusm}, and \eqref{defn_uminus_2j+1}:  
		\begin{equation}\label{BeltramiEq_mEven_umPhiag}
			\begin{aligned}
				&u_{-2j} \in C^{2}(\OM; l_{\INF}),              && \text{for} \quad 0 \leq j \leq q-1, \; q \geq 1,\\
				&u_{-(2j+1)} \in C^{1}(\OM; l_{\INF}),          && \text{for} \quad 0 \leq j \leq q-1, \; q \geq 1,\quad \text{and} \\
				&\ol{\del} u_{-2j} + \del u_{-(2j+2)} +a u_{-(2j+1)}= 0, &&\text{for} \quad 0 \leq j \leq q-1, \; q \geq 1,
			\end{aligned}
		\end{equation}
		are satisfied. Moreover, by conjugating the last equation in \eqref{BeltramiEq_mEven_umPhiag} yields
		\begin{equation}\label{BeltramiEq_mEven_umPhiag_conj}
			\begin{aligned}
				&\del u_{2j} + \ol{\del} u_{(2j+2)} +a u_{(2j+1)}= 0, \quad \text{for} \quad 0 \leq j \leq q-1, \; q \geq 1.
			\end{aligned}
		\end{equation}
		
		By the definition of the class \eqref{ART_mEvenPsiClass}, and  reality of $g$, we have
		the trace of $u_{-2j}$ in \eqref{defn_uminus_2j} satisfies
		\begin{equation}\label{trace_uminus_2j}
			\begin{aligned}
				u_{-2j} \lvert_{\Gam} &= g_{-2j},   \quad \text{and} \quad u_{2j} \lvert_{\Gam} = \ol{g_{-2j}} = g_{2j} , \quad  0 \leq j \leq q-1, \; q \geq 1.
			\end{aligned} 
		\end{equation} 
		We check next that the trace of $u_{-(2j+1)}$ is $g_{-(2j+1)}$ for  $0 \leq j \leq q-2, \; q \geq 2$:
		\begin{equation}\label{trace_uminus_twojone}
			\begin{aligned}
				u_{-(2j+1)} \big \lvert_{\Gam} &= \left. -\frac{\ol{\del}\psi_{-2j}+\del \psi_{-(2j+2)}}{a} \right \lvert_{\Gam} = g_{-(2j+1)},
			\end{aligned} 
		\end{equation} where the last equality uses 
		the condition in class \eqref{ART_mEvenPsiClass}.
		Similar calculation to \eqref{trace_uminus_twojone} for mode $u_{-(2q-1)}$ give the trace 
		\begin{align}\label{trace_u_oneminus_m}
			u_{-(2q-1)} \big \lvert_{\Gam} &= \left. -\frac{\ol{\del}\psi_{-2(q-1)}+\del u_{-2q}}{a} \right \lvert_{\Gam} =  g_{-(2q-1)}.
		\end{align} 
		Thus, from \eqref{trace_uminus_2j} - \eqref{trace_u_oneminus_m}, we have the traces: 
		\begin{equation} \label{trace_umodn<m-1}
			\begin{aligned}
				u_{n}\big \lvert_{\Gam} &= g_{n}, \quad \forall |n| \leq 2q-1.
			\end{aligned}
		\end{equation}
		
		{\bf Step 4: The construction of the tensor field $\bbf_{\Psi} $ whose attenuated $X$-ray data is $g$.}

		The components of the $m$-tensor $\bbf_{\Psi} $ are defined via the one-to-one correspondence between  the pseudovectors $\langle \tilde{f}_0, \tilde{f}_{1}, \cdots , \tilde{f}_{m} \rangle $  and the functions  $\{f_{2n}:\; -q\leq n\leq q\}$ as follows. 
		
		We define first $f_{2q}$  by using $\psi_{-2(q-1)}$ from the non-uniqueness class, and Fourier modes $u_{-2q}, u_{-(2q+1)}  \in C^{2}(\OM;l_{\INF})$ from \eqref{eq:LmU_eGv}. Then, next define $f_{2q-2}$  by using $\psi_{-2(q-1)}, \psi_{-2(q-2)} $ from the non-uniqueness class, and Fourier mode $u_{-2q}$ from \eqref{eq:LmU_eGv}. Then, define
		$\{f_{2n}:\; 0 \leq n \leq q-2\}$ solely from the information in the non-uniqueness class. Finally, define $\{f_{-2n}:\; 1\leq n\leq q\}$ by conjugation.

		\begin{equation} \label{ART_mEven_fmPsiEq}
			\begin{aligned} 
				&f_{2q}:= -\ol{\del} \left( \frac{\ol{\del}\psi_{-2(q-1)}+\del u_{-2q}}{a} \right) + \del u_{-(2q+1)} +a u_{-2q},  \quad q\geq 1,\\
				&f_{2q-2}:= -\ol{\del} \left( \frac{\ol{\del}\psi_{-2(q-2)}+\del \psi_{-2(q-1)}}{a} \right) - \del  \left( \frac{\ol{\del}\psi_{-2(q-1)}+\del u_{-2q}}{a} \right) +a \psi_{-2(q-1)}, \quad q\geq 2,\\
				&f_{2n}:= -\ol{\del} \left( \frac{\ol{\del}\psi_{-2(n-1)}+\del \psi_{-2n}}{a} \right) - \del  \left( \frac{\ol{\del}\psi_{-2n}+\del \psi_{-2(n+1)}}{a} \right) +a \psi_{-2n}, \quad 1 \leq n \leq q-2, \quad q\geq 3, \\
				&f_{0}:= \begin{cases}
					\ds -2 \re \del \left( \frac{\ol{\del}\psi_{0}+\del u_{-2}}{a} \right)+a \psi_0, \quad q= 1,\\ 
					\ds -2 \re \del \left( \frac{\ol{\del}\psi_{0}+\del \psi_{-2}}{a} \right)+a \psi_0, \quad q\geq 2,\\ 
				\end{cases} \\
				&f_{-2n}:=\ol{f_{2n}}, \quad 0 \leq n \leq q, \; q\geq 1,
			\end{aligned}
		\end{equation}
		By construction, $f_{2n}\in C(\OM)$, for $0 \leq n \leq q, \; q\geq 1$, as 
		$ \psi_{-2n}\in C^{2}(\OM;l_{\INF})$, for $0 \leq n \leq q-1$, from \eqref{ART_mEvenPsiClass}.
		Note that $f_{2n}$ satisfy \eqref{ART_mEven_fmEq}.
		We use these Fourier modes $\langle f_0 , f_{\pm 2}, f_{\pm 4}, \cdots , f_{\pm m}\rangle $  and equations \eqref{eq:Evenmodes_fk},  ~\eqref{reality_cond} and \eqref{eq:tilde_fk} 
		to construct pseudovectors $\langle \tilde{f}_0, \tilde{f}_{1}, \cdots , \tilde{f}_{m} \rangle $, and thus $m$-tensor field $\bbf_{\Psi} \in C(\mathbf{S}^m; \OM)  $.
		
		In order to show $g \lvert_{\Gam_{+}} = X_{a}\bbf_{\Psi}$ with $\bbf_{\Psi}$ being constructed from pseudovectors via  Fourier modes as in \eqref{ART_mEven_fmPsiEq} from class $\Psi^{\text{even}}_{a,g}$, 
		we define the real valued function $u$ via its Fourier modes
		\begin{equation}\label{ART_mEven_defnUPsi}
			\begin{aligned}
				u(z, \btheta)&:= \sum_{|n| \geq 2q} u_{n}(z)e^{\i n\tta}  + 2 \re \left( -\frac{\ol{\del}\psi_{-2(q-1)}+\del u_{-2q}}{a} \right) e^{ -\i (2q-1)\tta}
				\\ &\quad + 2 \re \left \{\sum_{n=0}^{q-1} \psi_{-2n}(z)e^{ -\i (2n)\tta}
				\right \}  + 2 \re \left \{\sum_{n=0}^{q-2} \left( -\frac{\ol{\del}\psi_{-2j} +\del \psi_{-(2j+2)}}{a} \right) e^{ -\i (2n+1)\tta} \right \} 
			\end{aligned} 
		\end{equation} and check that it has the trace $g$ on $\Gam$ and satisfies the transport equation \eqref{eq:Transport_fmEven}.
		
		Since $g\in C^{\mu} \left(\Gam; C^{1,\mu}(\sph) \right)\cap C(\Gam;C^{2,\mu}(\sph))$, we use Proposition \ref{functoseq_regularityprop} (ii) and 
		\cite[Proposition 4.1 (iii)]{sadiqTamasan01} to conclude that $u$ defined in \eqref{ART_mEven_defnUPsi} belongs to $C^{1,\mu}(\OM \times \sph)\cap C^{\mu}(\ol{\OM}\times \sph)$. In particular $u(\cdot,\btheta)$ for $\btheta= (\cos\tta,\sin\tta)$ extends to the boundary and its trace satisfies
		\begin{align*}
			u(\cdot,\btheta)\lvert_{\Gam}
			&=\sum_{|n| \geq 2q} u_{n} \big \lvert_{\Gam} e^{\i n\tta}  +\sum_{|n| \leq 2q-1} u_{n} \big \lvert_{\Gam} e^{\i n\tta} 
			=\sum_{|n| \geq 2q} g_{n}  e^{\i n\tta}  +\sum_{|n| \leq 2q-1} g_{n} e^{\i n\tta}  
			= g(\cdot,\btheta),
		\end{align*} where in the second equality above we use \eqref{vn_intermsof_gn}, \eqref{Trace_un_pos_gn} and \eqref{trace_umodn<m-1}.
		
		Since $u\in C^{1,\mu}(\OM \times \sph)\cap C^{\mu}(\ol{\OM}\times \sph)$, then 
		using \eqref{uL2sys_mEven_a>0}, \eqref{uL2sys_mEven_a>0_Pos}, \eqref{defn_u_oneminusm}, \eqref{BeltramiEq_mEven_umPhiag},  \eqref{BeltramiEq_mEven_umPhiag_conj}, 
		and the definition of $f_{2n}$ for $-q \leq n \leq q, \;q\geq 1$ in \eqref{ART_mEven_fmPsiEq}, the real valued $u$ defined in \eqref{ART_mEven_defnUPsi} satisfies the transport equation \eqref{eq:Transport_fmEven}:
		\begin{align*}
			\btheta \cdot \nabla u +a u &= \langle \bbf_{\Psi},  \btheta^{2q} \rangle, \quad q \geq 1.
		\end{align*}
	\end{proof}

	\section{Odd order $m$-tensor - attenuated case}\label{sec:RangeART_mOdd}
	In this section, we establish necessary and sufficient conditions for a sufficiently smooth function on $\Gam \times \sph$ to be the attenuated $X$-ray data of some sufficiently smooth real valued symmetric tensor field $\bbf$ of odd order 	$m =2q+1, \; q \geq 0$.
	
	In this case $a \neq 0$, the transport equation becomes
	\begin{align}\label{eq:Transport_fmOddART}
		\btheta \cdot\nabla u(x,\btheta) + a(x) u(x,\btheta) = \sum_{n=0}^{q} \left( f_{2n+1}(x) e^{- \i (2n+1) \tta} + f_{-(2n+1)}(x) e^{ \i (2n+1) \tta} \right) , \quad x\in\OM, 
	\end{align} 
	where $ \ol{f}_{2n+1} = f_{-(2n+1)}, \; 0 \leq n \leq q, \; q \geq 0$. 
	
	If $\ds \sum_{n \in \BZ} u_{n}(z) e^{\i n\tta}$ is the Fourier series  expansion in the angular variable $\btheta$ of a solution $u$ of \eqref{eq:Transport_fmOddART}, then by identifying the Fourier coefficients of the same order, the equation \eqref{eq:Transport_fmOddART} reduces to the system:
	\begin{align}
		\label{ART_mOdd_fmEq}
		&\ol{\del} u_{-2n}(z) + \del u_{-(2n+2)}(z) +a u_{-(2n+1)}(z) = f_{2n+1}(z), && 0 \leq n \leq q, \; q \geq 0, \\ 
		\label{eq:Beltrami_mOdd_ueven<mby2}
		&\ol{\del} u_{-(2n-1)}(z) + \del u_{-(2n+1)}(z) +a u_{-2n}(z) = 0, && 0 \leq n \leq q, \; q \geq 0, \\ 
		\label{BeltramiEq_mOdd_un>=m}
		&\ol{\del} u_{-n}(z) + \del u_{-(n+2)}(z) +a u_{-(n+1)}(z)= 0, && n \geq 2q+1, \; q \geq 0,
	\end{align}
	
	Recall that the trace $u \lvert_{\Gam \times \sph} := g$ as in \eqref{g_trace}, with $g= X_a \bbf \text{ on }\Gam_+ \mbox{ and  }g= 0 \mbox{ on }\Gam_- \cup \Gam_{0}.$ \\
	We expand the attenuated $X$-ray data $g$ in terms of its Fourier modes in the angular variables:
$\ds g(\zeta,\btheta) = \sum_{n = -\infty}^{\infty} g_{n}(\zeta) e^{\i n\tta},$ for $\zeta \in \Gam$.
	From the non-positive modes of $g$, we built the sequences $\ds \bg:=\langle g_{0}, {g}_{-1}, g_{-2}, ... \rangle,$ and 
	$\ds 	\bg_h:= e^{-G} \bg:= \langle \gamma_{0}, \gamma_{-1}, \gamma_{-2}, ... \rangle,$ where $e^{\pm G}$ as defined in \eqref{eGop}. 
	From the non-positive even, respectively, negative odd Fourier modes, we built the sequences
	\begin{align}\label{gHEvenOdd_mOddART}
		\bg_h^{\text{even}} 
		= \langle \gamma_{0}, \gamma_{-2},\gamma_{-4},...\rangle, \quad \text{and} \quad
		\bg_h^{\text{odd}} = \langle \gamma_{-1}, \gamma_{-3},\gamma_{-5},...\rangle.
	\end{align}
	
	Next we characterize the attenuated $X$-ray data $g$ in terms of its $m$ many modes $g_{0}, g_{-1}, \cdots g_{-(m-1)}$, and the Fourier modes 
	$\ds L^{m}\bg_h:= L^{m}e^{-G} \bg:= \langle \gamma_{-m}, \gamma_{-(m+1)}, \gamma_{-(m+2)}, ... \rangle$. \\
	As before we construct simultaneously the right hand side of the transport equation \eqref{eq:Transport_fmOddART} together with the solution $u$. Construction of $u$ is via its Fourier  modes. We first construct the negative modes and then the positive modes are constructed by conjugation. 
	For $m =2q+1$ (odd integer), $q \geq 1$, the modes will be chosen arbitrarily from the class $\Psi_{a,g}^{\text{odd}}$ of cardinality $q=\frac{m-1}{2}$ with prescribed trace and gradient on the boundary $\Gam$ defined as
	\begin{equation}\label{ART_mOddPsiClass}
		\begin{aligned}
			\Psi_{a,g}^{\text{odd}}:=
			&\left \{ \left( \psi_{-1}, \psi_{-3}, \cdots ,\psi_{-(2q-1)}\right) \in \left(C^{2}(\ol\OM; \BC)\right)^{q}:  
			\right.  \\ 
			&\left. \quad \vphantom{\int}
			\psi_{-(2j-1)} \big \lvert_{\Gam}= g_{-(2j-1)}, \; 1 \leq j \leq q, \; q \geq 1,
			\right.  \\ 
			&\left. \quad \vphantom{\int}  \ol{\del} \psi_{-(2q-1)}  \big \lvert_{\Gam} = - \del (e^G \B e^{-G} \bg )_{-(2q+1)}  \big \lvert_{\Gam}  -  a \big \lvert_{\Gam} \,  g_{-2q}, \quad q \geq 1,
			\right. \\ 
			&\left. \quad \vphantom{\int} 
			\ol{\del} \psi_{-(2j-1)} \big \lvert_{\Gam} = - \del \psi_{-(2j+1)} \big\lvert_{\Gam}  -  a \big \lvert_{\Gam} \,   g_{-2j}, \quad 1 \leq j \leq q-1, \quad q \geq 2,
			\right. \\ 
			&\left. \quad \vphantom{\int}
			2 \left( \re \del \psi_{-1}  \big \lvert_{\Gam} \right ) = -a \lvert_{\Gam} \,  g_{0}, 
			\right \} 
		\end{aligned}
	\end{equation} where $\B$ be the Bukhgeim-Cauchy operator in  \eqref{BukhgeimCauchyFormula}, and the operators $e^{\pm G}$ as defined in \eqref{eGop}.

	\begin{remark}
		In the 1-tensor case ($q = 0$), there is no class, and the characterization of the attenuated $X$-ray data $g$ is in terms of its zero-th mode $g_0=\oint g(\cdot,\tta)d\tta$ and negative Fourier modes of $\bg_h:= e^{-G} \bg$.
	\end{remark}

	\begin{theorem}[Range characterization for odd order tensors]\label{ARTmOddTensor}
		Let $a \in C^{2,\mu}(\ol\OM)$, $\mu>1/2$ with $\underset{\ol{\OM}}{\min}\, a >0$. and $m = 2q+1, \, q\geq 0$.
		(i) Let $\bbf  \in C_{0}^{1,\mu}(\mathbf{S}^m ; \OM)$ 
		be a  real-valued symmetric  $m$-tensor field of odd order  and 
		$$g= X_a \bbf \text{ on }\Gam_+ \mbox{ and  }g= 0 \mbox{ on }\Gam_- \cup \Gam_{0}.$$
		Then $\bg_h^{\emph{even}}, \bg_h^{\emph{odd}} \in l^{1,1}_{\INF}(\Gamma)\cap C^\mu(\Gamma;l_1)$ satisfy
		\begin{align}\label{ARTmOddTensorCondEvenOdd}
			&[I+\i\HT]  L^{\frac{m+1}{2}}  \bg_h^{\emph{even}}  =\bzero, \quad [I+\i\HT]  L^{\frac{m-1}{2}}\bg_h^{\emph{odd}}  =\bzero, \quad \text {for} \quad q \geq0,
		\end{align} where $\bg_h^{\emph{even}}, \bg_h^{\emph{odd}}$ are sequences in \eqref{gHEvenOdd_mOddART}.
		Additionally, in $q=0$ case, for each $\zeta\in\Gam$, the zero-th Fourier mode $g_0$ of $g$ satisfy
		\begin{equation}\label{ART1Tensor_g0Cond}
			\displaystyle g_{0}(\zeta)= 
			\lim_{\OM\ni z \to \zeta\in \Gam} \frac{-2\re \del (e^G \B \bg_h )_{-1} (z)}{a(z)}, \quad \text {for} \quad q =0,
		\end{equation}
		where $\B$ be the Bukhgeim-Cauchy operator in  \eqref{BukhgeimCauchyFormula}, and the operators $e^{\pm G}$ as defined in \eqref{eGop}.
		
		(ii) Let $g\in C^{\mu} \left(\Gam; C^{1,\mu}(\sph) \right)\cap C(\Gam;C^{2,\mu}(\sph))$ be real valued with $g \lvert_{\Gam_{-} \cup \Gam_{0}}=0$.
		For $q=0$, if the corresponding sequences $\bg_h^{\emph{even}}, \bg_h^{\emph{odd}}\in Y_{\mu}(\Gam)$ satisfies \eqref{ARTmOddTensorCondEvenOdd},
		and $g_0$ satisfies \eqref{ART1Tensor_g0Cond}, then there exists a unique real valued vector field ($1$-tensor) 
		$\bbf \in C(\mathbf{S}^m;\OM)$ such that $g \lvert_{\Gam_{+}} = X_a\bbf$. 
		Moreover, for $q \geq 1$, if $\bg_h^{\emph{even}}, \bg_h^{\emph{odd}} \in Y_{\mu}(\Gam)$ satisfies \eqref{ARTmOddTensorCondEvenOdd}, 
		and for each element $\left( \psi_{-1}, \psi_{-3}, \cdots ,\psi_{-(2q-1)}\right)   \in \Psi_{a,g}^{\emph{odd}}$,
		then there is a unique real valued symmetric $m$-tensor $\bbf_{\Psi} \in C(\mathbf{S}^m; \OM)  $ such that
		$g\lvert_{\Gam_{+}} = X_a\bbf_{\Psi}$.
	\end{theorem}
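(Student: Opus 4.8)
The plan is to follow the even attenuated argument of Theorem~\ref{ARTmEvenTensor}, using the operators $e^{\pm G}$ from \eqref{eGop} together with Lemma~\ref{beltrami_reduction} to exchange the attenuation term for pure $L^2$-analyticity, while carefully tracking the parity reversal caused by the odd shift $L^m$. For \emph{necessity}, given $\bbf\in C_0^{1,\mu}(\mathbf{S}^m;\OM)$ the solution $u$ of \eqref{eq:Transport_fmOddART} lies in $C^{1,\mu}(\ol\OM\times\sph)$ with trace $g$ vanishing near $\Gam_0$, so Proposition~\ref{functoseq_regularityprop}(i) and Proposition~\ref{eGprop} give $\bg_h=e^{-G}\bg\in l^{1,1}_\INF(\Gam)\cap C^\mu(\Gam;l_1)$. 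The modes of $u$ obey \eqref{ART_mOdd_fmEq}--\eqref{BeltramiEq_mOdd_un>=m}; the homogeneous recursion \eqref{BeltramiEq_mOdd_un>=m} says $L^m\bu$ solves $\dba L^m\bu+L^2\del L^m\bu+aL^{m+1}\bu=0$, so by Lemma~\ref{beltrami_reduction}(i) the sequence $\bv:=L^m e^{-G}\bu=e^{-G}L^m\bu$ is $L^2$-analytic with trace $L^m\bg_h$. Since $m=2q+1$ is odd, intertwining swaps parity: the even subsequence $\langle v_0,v_{-2},\dots\rangle$ is $L$-analytic with trace $L^{q}\bg_h^{\text{odd}}$ and the odd subsequence $\langle v_{-1},v_{-3},\dots\rangle$ is $L$-analytic with trace $L^{q+1}\bg_h^{\text{even}}$, so the necessity half of Theorem~\ref{NecSuf_BukhgeimHilbert_Thm} delivers the two identities \eqref{ARTmOddTensorCondEvenOdd}. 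For $q=0$ there is one extra relation: evaluating \eqref{eq:Beltrami_mOdd_ueven<mby2} at $n=0$ with $u_1=\ol{u_{-1}}$ gives $au_0=-2\re\del u_{-1}$, and since $u_0|_\Gam=g_0$ while $u_{-1}=(e^G\B\bg_h)_{-1}$ (using $[\B,L]=0=[e^{\pm G},L]$), the boundary limit produces \eqref{ART1Tensor_g0Cond}.

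For \emph{sufficiency} I would first build the tail. Applying $\B$ to $L^{q}\bg_h^{\text{odd}}$ and to $L^{q+1}\bg_h^{\text{even}}$, the hypotheses \eqref{ARTmOddTensorCondEvenOdd} and the sufficiency half of Theorem~\ref{NecSuf_BukhgeimHilbert_Thm} make these $L$-analytic with the stated traces; intertwining yields an $L^2$-analytic $\bv$ with $\bv|_\Gam=L^m\bg_h$. I then set $L^m\bu:=e^G\bv$, so that Lemma~\ref{beltrami_reduction}(ii) and Proposition~\ref{eGprop} show $L^m\bu$ solves \eqref{BeltramiEq_mOdd_un>=m} with the regularity of \eqref{smothness__v_j}, while $[e^{\pm G},L]=0$ gives $u_{-n}|_\Gam=g_{-n}$ for $n\ge m$; the positive tail is defined by conjugation $u_n:=\ol{u_{-n}}$, which solves the conjugated recursion and has traces $g_n$.

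Next I would fill in the low modes and assemble the tensor. For $q\ge1$, take any $(\psi_{-1},\dots,\psi_{-(2q-1)})\in\Psi_{a,g}^{\text{odd}}$, put $u_{-(2j-1)}:=\psi_{-(2j-1)}$ and $u_{2j-1}:=\ol{\psi_{-(2j-1)}}$ for $1\le j\le q$, and read the low even modes off the homogeneous equations \eqref{eq:Beltrami_mOdd_ueven<mby2}, namely $u_{-2n}:=-(\ol\del u_{-(2n-1)}+\del u_{-(2n+1)})/a$, with $u_0=-2\re\del\psi_{-1}/a$ and $u_{-(2q+1)}$ the first tail mode; the trace and gradient conditions encoded in \eqref{ART_mOddPsiClass} are exactly what force $u_{-2n}|_\Gam=g_{-2n}$, and for $q=0$ the single condition \eqref{ART1Tensor_g0Cond} plays this role for $u_0$. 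Defining $f_{2n+1}$ by \eqref{ART_mOdd_fmEq} and $f_{-(2n+1)}:=\ol{f_{2n+1}}$, then converting through \eqref{eq:Oddmodes_fk}, \eqref{reality_cond}, \eqref{eq:tilde_fk}, produces a real symmetric $\bbf_\Psi\in C(\mathbf{S}^m;\OM)$. Reconstituting $u$ from all its modes as in the even attenuated case \eqref{ART_mEven_defnUPsi}, Proposition~\ref{functoseq_regularityprop}(ii) gives $u\in C^{1,\mu}(\OM\times\sph)\cap C^\mu(\ol\OM\times\sph)$, the trace bookkeeping above yields $u|_{\Gam\times\sph}=g$, and collecting the Fourier coefficients of $\btheta\cdot\nabla u+au$ (the negative equations give the $f_{2n+1}$, the conjugated positive ones give $f_{-(2n+1)}$, while the low bridging equations \eqref{eq:Beltrami_mOdd_ueven<mby2} and the tail Beltrami kill every even coefficient) gives $\btheta\cdot\nabla u+au=\langle\bbf_\Psi,\btheta^m\rangle$, so $g|_{\Gam_+}=X_a\bbf_\Psi$.

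The main obstacle is twofold. First is the index bookkeeping forced by the odd shift $L^m$, which swaps the roles of $\bg_h^{\text{even}}$ and $\bg_h^{\text{odd}}$ relative to the even case and pins down the shifts $L^{(m\pm1)/2}$; one must verify these land correctly in \eqref{ARTmOddTensorCondEvenOdd}. Second, and conceptually more important, is the observation that—unlike the non-attenuated odd case of Theorem~\ref{NARTmOddTensor}—the hypothesis $\min_{\ol{\OM}}a>0$ lets the bridging equation \eqref{eq:Beltrami_mOdd_ueven<mby2} at $n=0$ \emph{define} $u_0$ rather than impose the incompatible constraint $2\re\del u_{-1}=0$. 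Consequently the positive modes may be taken as literal conjugates, and neither the inductive Bukhgeim--Cauchy construction nor the conjugation Lemma~\ref{UconjLemma_mOdd_NART} is required; the only price is the scalar compatibility \eqref{ART1Tensor_g0Cond} that survives in the borderline $q=0$ case, which must be shown both necessary and sufficient for matching $u_0|_\Gam=g_0$.
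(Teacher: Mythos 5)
Your proposal is correct and follows essentially the same route as the paper's proof: conjugation by $e^{\pm G}$ with Lemma \ref{beltrami_reduction} to trade the attenuation for $L^2$-analyticity of $\bv = L^m e^{-G}\bu$, Theorem \ref{NecSuf_BukhgeimHilbert_Thm} applied to its two parity subsequences, the $q=0$ relation \eqref{ART1Tensor_g0Cond} extracted from the $n=0$ bridging equation, and in the sufficiency the same tail construction $L^m\bu := e^G\B[L^m\bg_h]$, low modes read off \eqref{eq:Beltrami_mOdd_ueven<mby2} using $a>0$ (so literal conjugation works and Lemma \ref{UconjLemma_mOdd_NART} is not needed), and the same assembly of $\bbf_\Psi$ and $u$. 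Incidentally, your parity bookkeeping — the even subsequence of $\bv$ carrying the trace $L^{(m-1)/2}\bg_h^{\text{odd}}$ and the odd subsequence carrying $L^{(m+1)/2}\bg_h^{\text{even}}$ — is the correct pairing; the paper's prose states the swapped pairing, though since both conditions in \eqref{ARTmOddTensorCondEvenOdd} are asserted simultaneously, the conclusion is unaffected.
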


	\begin{proof}
		(i) {\bf Necessity:} 
		Let $\bbf =(f_{i_1 \cdots i_m})  \in  C_{0}^{1,\mu}(\mathbf{S}^m;\OM)$.
		Since all components $f_{i_1 \cdots i_m}\in C_{0}^{1,\mu}(\OM)$, 
		$X_a \bbf \in C^{1,\mu}(\Gam_+)$, and, thus, the solution $u$ to the transport equation \eqref{eq:Transport_fmOddART} is in $C^{1,\mu}(\ol{\OM} \times \sph)$. Moreover, its trace $g=u \lvert_{\Gam \times \sph} \in C^{1,\mu}(\Gam \times \sph)$. By Proposition \ref{functoseq_regularityprop}(i) and Proposition \ref{eGprop}, $\bg_h= e^{-G} \bg \in l^{1,1}_{\INF}(\Gamma)\cap C^\mu(\Gamma;l_1)$.

		If $u$ solves \eqref{eq:Transport_fmOddART} then its Fourier modes satisfies \eqref{ART_mOdd_fmEq}, \eqref{eq:Beltrami_mOdd_ueven<mby2} and \eqref{BeltramiEq_mOdd_un>=m}.
		In particular, the sequence valued map $\ds \bu = \langle u_{0}, u_{-1},u_{-2},... \rangle$ 
		satisfy $\ds \dba L^m \bu +L^2 \del L^m \bu+ aL^{m+1}\bu = \bzero.$

		Let $\ds \bv := e^{-G} L^m \bu $, then by Lemma \ref{beltrami_reduction}, and the fact that the operators $e^{\pm G}$ commute with the left translation, $ [e^{\pm G}, L]=0$,
		the sequence $\ds \bv = L^m e^{-G}  \bu$ solves $\dba \bv + L^2\del \bv = \bzero$, i.e $\bv$ is $L^2$ analytic.
		The non-positive even subsequence $\langle v_{0}, v_{-2},   \cdots \rangle$, and negative odd subsequence $\langle v_{-1}, v_{-3},  \cdots \rangle$, respectively, are $L$ analytic, with traces 
		$L^{\frac{m+1}{2}}\bg_{h}^{\text{even}}$, respectively, $L^{\frac{m-1}{2}}\bg_{h}^{\text{odd}}$.
		The necessity part in Theorem  \ref{NecSuf_BukhgeimHilbert_Thm} yields \eqref{ARTmOddTensorCondEvenOdd}:
		\begin{align*}
			&[I+\i\HT] L^{\frac{m+1}{2}}\bg_{h}^{\text{even}} =\bzero, \quad [I+\i\HT] L^{\frac{m-1}{2}}\bg_{h}^{\text{odd}} =\bzero, \quad \text {for} \quad m =2q+1, \; q \geq0.
		\end{align*} 
		Additionally, in the $q=0$ case, the Fourier modes $u_{0}, u_{-1}, u_{1}$ of $u$ solve \eqref{eq:Beltrami_mOdd_ueven<mby2} for $n=0$.
		Since $a>0$ in $\OM$, we have
		\begin{align}\label{U0defn}
			u_{0}(z) &= \frac{-2\re  \del u_{-1} (z)  }{a(z)}, \quad z\in \OM.
		\end{align} Since the left hand side of \eqref{U0defn} is continuous all the way to the boundary, so is the right hand side. Moreover, the limit below exists and in the $q=0$ case, we have 
		\begin{align*}
			g_{0}(z_{0}) = \lim_{\OM \ni z \to z_{0}\in \Gam} u_{0}(z) =  \lim_{ \OM \ni z \to z_{0}\in \Gam} \frac{-2\re  \del u_{-1}(z)}{a(z)},
		\end{align*} thus \eqref{ART1Tensor_g0Cond} holds.
		This proves part (i) of the theorem.
		
		(ii) {\bf Sufficiency:} 	Let $g\in C^{\mu} \left(\Gam; C^{1,\mu}(\sph) \right)\cap C(\Gam;C^{2,\mu}(\sph))$ be real valued
		with $g \lvert_{\Gam_{-} \cup \Gam_{0}}=0$.
		Let the corresponding sequences $\bg_h^{\text{even}}, \bg_h^{\text{odd}}$ as in \eqref{gHEvenOdd_mOddART} satisfying \eqref{ARTmOddTensorCondEvenOdd}.  
		By Proposition \ref{functoseq_regularityprop}(ii) and Proposition \ref{eGprop}(iii), $\bg_h^{\text{even}}, \bg_h^{\text{odd}}\in Y_{\mu}(\Gam)$.
		
		Let $m =2q+1, \; q\geq 0$, be an odd integer. To prove the sufficiency we will construct a real valued symmetric $m$-tensor $\bbf$ in $\OM$ and a real valued function $u \in C^{1}(\OM\times \sph)\cap C(\ol\OM \times \sph)$
		such that $u \lvert_{\Gam \times \sph}=g$ and $u$ solves \eqref{eq:Transport_fmOddART} in $\OM$.
		The construction of such $u$ is in terms of its Fourier modes in the angular variable and it is done in several steps.

		{\bf Step 1: The construction of modes $u_{n}$ for $|n| \geq 2q+1, \; q\geq 0$.}

		Use the Bukhgeim-Cauchy Integral formula \eqref{BukhgeimCauchyFormula} to define the $L$-analytic maps
		\begin{align*}
			&\bv^{even}(z)= \langle v_{0}(z), v_{-2}(z), v_{-4}(z), ... \rangle:= \B L^{q+1}\bg_h^{\text{even}}(z), \quad z\in \OM,\\
			&\bv^{odd}(z)= \langle v_{-1}(z), v_{-3}(z), v_{-5}(z), ... \rangle:= \B L^{q}\bg_h^{\text{odd}}(z),\quad z\in \OM.
		\end{align*}By intertwining let also define $L^2$-analytic map
		\begin{align*}
			\bv(z):=\langle v_{0}(z), v_{-1}(z), v_{-2}(z),v_{-3}(z), ... \rangle , \quad z\in \OM.
		\end{align*}
		By Theorem \ref{NecSuf_BukhgeimHilbert_Thm} (ii),
		\begin{align}\label{smothness__v_j_mOddART}
			\bv^{\text{even}}, \bv^{\text{odd}},\bv\in C^{1,\mu}(\OM; l_{1})\cap C^{\mu}(\ol\OM;l_1)\cap C^2(\OM;l_\infty).
		\end{align}Moreover, since $\bg_h^{\text{even}}, \bg_h^{\text{odd}}$ satisfy the hypothesis \eqref{ARTmEvenTensorCondEvenOdd}, by  Theorem  \ref{NecSuf_BukhgeimHilbert_Thm} sufficiency part, we have
		\begin{align*}
			\bv^{\text{even}} \lvert_{\Gam} = L^{q+1}\bg_h^{\text{even}} \quad \text{and}\quad  \bv^{\text{odd}}\lvert_{\Gam} = L^{q}\bg_h^{\text{odd}}, \quad q \geq0.
		\end{align*} 
		In particular, $\bv$ is $L^2$-analytic with trace:
		\begin{align}\label{vn_intermsof_gn_mOddART}
			\bv \lvert_{\Gam} = L^{2q+1}\bg_h = L^{2q+1} e^{-G} \bg,   \quad q \geq0,
		\end{align} where $\bg_h$ is formed by intertwining $\bg_h^{\text{even}}$ and $ \bg_h^{\text{odd}}$.
		
		For $q \geq 0$, define the sequence valued map 
		\begin{align}\label{eq:LmU_eGv_mOddART}
			&\OM \ni z\mapsto  L^{2q+1} \bu(z)=\langle u_{-(2q+1)}(z), u_{-(2q+2)}(z), u_{-(2q+3)}(z),  \cdots \rangle := e^{G} \bv (z).
		\end{align}  
		By Proposition \ref{eGprop}, 
		$ L^{2q+1} \bu \in C^{1,\mu}(\OM; l_{1})\cap C^{\mu}(\ol\OM;l_1)$. 
		Moreover, since $\bv\in C^2(\OM;l_\infty)$ as in \eqref{smothness__v_j_mOddART}, we also conclude from convolution that $L^{2q+1} \bu \in C^{2}(\OM; l_\infty).$
		Thus,  
		\begin{align}\label{smothness_Lbu_mOddART}
			L^{2q+1} \bu\in C^{1,\mu}(\OM; l_{1})\cap C^{\mu}(\ol\OM;l_1)\cap C^2(\OM;l_\infty).
		\end{align}
		As $\bv$ is $L^2$ analytic, by Lemma \ref{beltrami_reduction}, $L^{2q+1} \bu$ satisfies $\ds 
			\dba L^{2q+1} \bu +L^2 \del L^{2q+1} \bu+ a L^{2q+2}\bu = 0,$ for $ q \geq0,$ which in component form is written as:
		\begin{align}\label{uL2sys_mEven_a>0_mOddART}
			\ol{\del} u_{-n} + \del u_{-n-2} +a u_{-n-1} = 0, \quad n\geq 2q+1, \; q\geq 0.
		\end{align}
		The trace satisfy
		\begin{align}\label{Lmu_trace_mOddART}
			L^{2q+1} \bu \lvert_{\Gam}    = e^G \bv \lvert_{\Gam} = e^G L^{2q+1} e^{-G} \bg = L^{2q+1} \bg, \quad q\geq 0,
		\end{align} where the second equality follows from \eqref{vn_intermsof_gn_mOddART} and in the last equality  we use $ [e^{\pm G}, L]=0$.
		
		Construct the positive Fourier modes by conjugation: 
		$\ds u_{n}:=\ol{u_{-n}},$ for all $n\geq 2q+1, \,q\geq 0$. Moreover using \eqref{Lmu_trace_mOddART}, and the reality of $g$, the traces $u_n \lvert_{\Gam}$ satisfy
		\begin{align}\label{Trace_un_pos_gn_mOddART}
			u_{n}\lvert_{\Gam} = \ol{u_{-n}}\lvert_{\Gam}= \ol{g_{-n}}= g_{n}, \quad n \geq 2q+1, \; q\geq 0.
		\end{align}
		
		By conjugating \eqref{uL2sys_mEven_a>0_mOddART}, and from \eqref{Lmu_trace_mOddART} and \eqref{Trace_un_pos_gn_mOddART}, we thus have the Fourier modes satisfy
		\begin{align}\label{uL2sys_mEven_|n|>=m_mOddART}
			\ol{\del} u_{-n} + \del u_{-n-2} +a u_{-n-1} = 0, \quad  \text {and} \quad	u_{n}\big \lvert_{\Gam} = g_{n}, \quad \forall |n| \geq 2q+1, \; q\geq 0.
		\end{align} 
		
		
		{\bf Step 2: The construction of 1-tensor  ($q = 0$ case).}
		
		Since $a>0$ in $\OM$, we can define $u_{0}$ (in $q=0$ case) by using the Fourier mode $u_{-1}$ from \eqref{eq:LmU_eGv_mOddART}:
		\begin{align}\label{ConvU0defn}
			u_{0}(z) := \displaystyle -\frac{2\re{\del u_{-1}(z)}}{a(z)},\quad z \in \OM, \quad (\text{for  } q = 0 
			\, \text{   case}).
		\end{align} 
		Note that $u_0$ satisfy \eqref{uL2sys_mEven_|n|>=m_mOddART} for $n=-1$. In particular 
		$\ds \ol{\del} u_{1} + \del u_{-1} +au_{0} = 0 $ holds.\\
		From \eqref{ART1Tensor_g0Cond}, $u_{0}$ defined above extends continuously to the boundary $\Gam$ and 
		\begin{align*}
			u_{0} \big \lvert_{\Gam} = g_0, \quad  (\text{for  } q = 0 \, \text{   case}).
		\end{align*}
		Moreover, since $u_{-1}\in C^2(\OM)$ as shown in \eqref{smothness_Lbu_mOddART} and $a\in C^2(\OM)$ we get $u_0\in C^1(\OM)$.
		
		Using the Fourier modes $u_{-1}, u_{-2}$ from \eqref{eq:LmU_eGv_mOddART} and $u_0$ as in \eqref{ConvU0defn}, 
		we next define the real valued vector field $\bbf \in C(\OM;\mathbb{R}^2)$ (for $q=0$ case) by
		\begin{align}\label{onetensor_ART}
			\bbf = \langle 2 \re {f_1}, 2\im{f_1} \rangle, \quad \text{where } \quad
			f_{1}:=\ol{\del}u_{0}+\del u_{-2}+au_{-1}.
		\end{align}



		We consider next the case $q \geq 1$ of tensors of order 3 or higher. 
		In this case the construction of the tensor field $\bbf_{\Psi} $  is in terms of the Fourier modes $u_{-(2q+1)}, u_{-(2q+2)}$ in \eqref{eq:LmU_eGv_mOddART} and the class
		$ \Psi_{a,g}^{\text{odd}}$ as in \eqref{ART_mOddPsiClass}. 
		
		{\bf Step 3: The construction of modes $u_{n}$ for $|n| \leq 2q, \; q\geq 1$.}
		
		Recall  the non-uniqueness class $ \Psi_{a,g}^{\text{odd}}$ as in \eqref{ART_mOddPsiClass}.

		For $\left( \psi_{-1}, \psi_{-3}, \cdots ,\psi_{-(2q-1)}\right) \in \Psi_{a,g}^{\text{odd}}$ arbitrary,  firstly define the odd modes 
		\begin{equation}\label{defn_uPsiOdd_mOddART}
			\begin{aligned}
				u_{-(2n-1)}&:= \psi_{-(2n-1)},  \quad \text{and} \quad u_{2n-1} := \ol{\psi}_{-(2n-1)}, \quad 1 \leq n \leq q, \; q \geq 1.
			\end{aligned} 
		\end{equation}
		Secondly, by using $\psi_{-1}, \psi_{-(2q-1)} $ and the mode $u_{-(2q+1)}$ from \eqref{eq:LmU_eGv_mOddART}, we define the modes 
		\begin{align} \label{defn_u_zero_mOddART} 
			u_{0}&:= -\frac{2 \re \del \psi_{-1}}{a}, \\ \label{defn_u_oneminus_mOddART}
			u_{-2q}&:= -\frac{\ol{\del}\psi_{-(2q-1)}+\del u_{-(2q+1)}}{a}, \quad \text{and} \quad u_{2q} := \ol{u_{-2q}} \quad \text{for} \quad q \geq 1.
		\end{align} 
		
		Lastly, by using $\psi_{-(2n-1)} \in C^{2}(\ol\OM;\mathbb{C})$, for  $1 \leq n \leq q-1, \; q \geq 2$, we define the even modes 
		\begin{equation}\label{defn_uPsiEven_mOddART}
			\begin{aligned}
				u_{-2n}&:= -\frac{\ol{\del}\psi_{-(2n-1)}+\del \psi_{-(2n+1)}}{a}, \quad 1 \leq n \leq q-1, \; q \geq 2, \quad \text{and} \\
				u_{2n} &:= \ol{u}_{-2n}, \quad 1 \leq n \leq q-1, \; q \geq 2.
			\end{aligned} 
		\end{equation}
		By the construction in \eqref{defn_u_zero_mOddART}, \eqref{defn_u_oneminus_mOddART}, and \eqref{defn_uPsiEven_mOddART}, we have 
		\begin{equation}\label{BeltramiEq_mOdd_umPhiag_mOddART}
			\begin{aligned}
				&u_{-(2n-1)} \in C^{2}(\OM; l_{\INF}), \quad \text{for} \quad 1 \leq n \leq q, \; q \geq 1, \\
				&u_{-2n} \in C^{1}(\OM; l_{\INF}), \quad \text{for} \quad 0 \leq n \leq q, \; q \geq 1, \quad \text{and} \\
				&\ol{\del} u_{-(2n-1)} + \del u_{-(2n+1)} +a u_{-2n} = 0, \quad \text{for} \quad 0 \leq n \leq q, \; q \geq 1, \\ 
			\end{aligned}
		\end{equation}
		is satisfied. 
		Moreover, by conjugating the last equation in \eqref{BeltramiEq_mOdd_umPhiag_mOddART}, we have the Fourier modes satisfy
		\begin{align}\label{uL2sys_mEven_|n|<=m_mOddART}
			\ol{\del} u_{-(2n-1)} + \del u_{-(2n+1)} +a u_{-2n} = 0, \quad \text{for} \quad |n| \leq q, \; q \geq 1.
		\end{align}
		By the class \eqref{ART_mOddPsiClass}, and  reality of $g$, we have
		the trace of $u_{-(2n-1)} $ in \eqref{defn_uPsiOdd_mOddART} satisfy
		\begin{equation}\label{trace_uPsiOdd_mOddART}
			\begin{aligned}
				u_{-(2n-1)} \lvert_{\Gam} &= g_{-(2n-1)},  \quad \text{and} \quad 
				u_{2n-1} \lvert_{\Gam} = \ol{g}_{-(2n-1)} = g_{2n-1} , \quad 1 \leq n \leq q, \; q \geq 1.
			\end{aligned} 
		\end{equation} 
		We check next that the trace of $u_{-2n}$ is $g_{-2n}$ for  $1 \leq n \leq q-1, \; q \geq 2$:
		\begin{equation}\label{trace_uminus_twojone_mOddART}
			\begin{aligned}
				u_{-2n} \big \lvert_{\Gam} &= \left. -\frac{\ol{\del}\psi_{-(2n-1)}+\del \psi_{-(2n+1)}}{a} \right \lvert_{\Gam} = g_{-2n},
			\end{aligned} 
		\end{equation} where the last equality uses the condition in class \eqref{ART_mOddPsiClass}.
		Similar calculation to \eqref{trace_uminus_twojone_mOddART} for mode $u_{0}$ in \eqref{defn_u_zero_mOddART}, and mode $u_{-2q}$ in \eqref{defn_u_oneminus_mOddART}, give the trace 
		\begin{equation}
			\begin{aligned}\label{trace_u_oneminus_mOddART}
				u_{0} \big \lvert_{\Gam} &= g_{0}, \quad \text{and} \quad u_{-2q} \big \lvert_{\Gam} = g_{-2q}, \quad q \geq 1.
			\end{aligned}
		\end{equation}
		Thus, from \eqref{trace_uPsiOdd_mOddART}, \eqref{trace_uminus_twojone_mOddART} and \eqref{trace_u_oneminus_mOddART}, we have the traces: 
		\begin{equation} \label{trace_umodn<m_mOddART}
			\begin{aligned}
				u_{n}\big \lvert_{\Gam} &= g_{n}, \quad \forall |n| \leq 2q, \; q \geq 1.
			\end{aligned}
		\end{equation}

		{\bf Step 4: The construction of the tensor field $\bbf_{\Psi} $ whose attenuated $X$-ray data is $g$.}
		
		The components of the $m$-tensor $\bbf_{\Psi} $ are defined via the one-to-one correspondence between  the pseudovectors $\langle \tilde{f}_0, \tilde{f}_{1}, \cdots , \tilde{f}_{m} \rangle $  and the functions  $\{f_{\pm(2n+1)}:\; 0\leq n\leq q\}$ as follows. 
		
		We first define $f_{2q+1}$  by using $\psi_{-(2q-1)}$ from the non-uniqueness class, and the  Fourier modes $u_{-(2q+1)}, u_{-(2q+2)}$ in \eqref{eq:LmU_eGv_mOddART}. Next, define
		$f_{2q-1}$  by using $\psi_{-(2q-1)}, \psi_{-(2q-3)} $ from the non-uniqueness class, and Fourier mode $u_{-(2q+1)}$ in \eqref{eq:LmU_eGv_mOddART}. Then, define
		$\{f_{2n+1}:\; 0 \leq n \leq q-2\}$ solely from the information in the non-uniqueness class. Finally, define $\{f_{-(2n+1)}:\; 0\leq n\leq q\}$ by conjugation. 
		\begin{equation} \label{ART_mOdd_fmPsiEq}
			\begin{aligned} 
				&f_{2q+1}:= -\ol{\del} \left( \frac{\ol{\del}\psi_{-(2q-1)}+\del u_{-(2q+1)}}{a} \right) + \del u_{-(2q+2)} +a u_{-(2q+1)},  \quad q\geq 1,\\
				&f_{2q-1}:= -\ol{\del} \left( \frac{\ol{\del}\psi_{-(2q-3)}+\del \psi_{-(2q-1)}}{a} \right) - \del  \left( \frac{\ol{\del}\psi_{-(2q-1)}+\del u_{-(2q+1)}}{a} \right) +a \psi_{-(2q-1)}, \quad q\geq 2,\\
				&f_{2n+1}:= -\ol{\del} \left( \frac{\ol{\del}\psi_{-(2n-1)}+\del \psi_{-(2n+1)}}{a} \right) - \del  \left( \frac{\ol{\del}\psi_{-(2n+1)}+\del \psi_{-(2n+3)}}{a} \right) +a \psi_{-(2n+1)}, \,1 \leq n \leq q-2,  \\
				&f_{1}:= \begin{cases}
					\ds -2  \ol{\del} \left( \frac{ \re \del \psi_{-1} }{a} \right)- \del  \left( \frac{\ol{\del}\psi_{-1}+\del u_{-3}}{a} \right) +a \psi_{-1}, \quad q= 1,\\ 
					\ds -2  \ol{\del} \left( \frac{ \re \del \psi_{-1} }{a} \right)- \del  \left( \frac{\ol{\del}\psi_{-1}+\del \psi_{-3}}{a} \right) +a \psi_{-1}, \quad q\geq 2,\\ 
				\end{cases} \\
				&f_{-(2n+1)}:=\ol{f_{2n+1}}, \quad 0 \leq n \leq q, \; q\geq 1,
			\end{aligned}
		\end{equation}
		By construction,  $f_{2n+1}\in C(\OM)$ for $0 \leq n \leq q, \; q\geq 1$, as $u_{-(2q+1)} \in C^{2}(\OM;l_{\INF})$ from \eqref{smothness_Lbu_mOddART}, and $ \psi_{-(2n-1)}\in C^{2}(\OM;l_{\INF})$, for $1 \leq n \leq q-1, \;q\geq 1$, from \eqref{ART_mOddPsiClass}.
		We use these $m+1$ Fourier modes $\langle f_{\pm 1}, f_{\pm 3}, \cdots , f_{\pm m}\rangle $,
		and equations \eqref{eq:Oddmodes_fk},  ~\eqref{reality_cond} and \eqref{eq:tilde_fk} to construct the pseudovectors $\langle \tilde{f}_0, \tilde{f}_{1}, \cdots , \tilde{f}_{m} \rangle $, and thus the $m$-tensor field $\bbf_{\Psi} \in C(\mathbf{S}^m; \OM)  $.

		Define the real valued function $u$ via its Fourier modes
		\begin{equation}\label{ART_mOdd_defnUPsi}
			\begin{aligned}
				u&(z, \btheta):= \sum_{|n| \geq 2q+1} u_{n}(z)e^{\i n\tta}   + 2 \re \left \{ \sum_{n=1}^{q} \psi_{-(2n-1)}(z)e^{ -\i (2n-1)\tta} \right \}  +\frac{-2\re \del \psi_{-1}(z)}{a}\\
				&\quad + 2 \re {\left(-\frac{\ol{\del}{\psi_{-(2q-1)}(z)}+\partial u_{-(2q+1)}(z)}{a}\right)}e^{-\i(2q)\theta} + 2 \re \left \{\sum_{n=1}^{q-1} u_{-2n}e^{-\i (2n\theta)} \right \} .
			\end{aligned} 
		\end{equation} 
		
		Using \eqref{uL2sys_mEven_|n|>=m_mOddART} and \eqref{trace_umodn<m_mOddART},  and definition of $\left( \psi_{-1}, \psi_{-3}, \cdots ,\psi_{-(2q-1)}\right) \in \Psi_{a,g}^{\text{odd}}$  for  $ q \geq 1$, the trace
		$u(\cdot,\btheta)$  in \eqref{ART_mOdd_defnUPsi}  extends to the boundary, and its trace satisfy $\ds u(\cdot,\btheta)\lvert_{\Gam}= g(\cdot,\btheta).$
		
		Moreover, by using \eqref{uL2sys_mEven_|n|>=m_mOddART}, \eqref{uL2sys_mEven_|n|<=m_mOddART} and the definition of $f_{2n-1}$ for $|n| \leq q, \;q\geq 1$ in \eqref{ART_mOdd_fmPsiEq}, the real valued $u$ defined in \eqref{ART_mOdd_defnUPsi} satisfies the transport equation \eqref{eq:Transport_fmOddART}:
		\begin{align*}
			\btheta \cdot\nabla u + a u =
			\langle \bbf_{\Psi},  \btheta^{2q+1} \rangle, \quad q \geq 1.
		\end{align*} 
		

	\end{proof}

	
	\section*{Acknowledgment}
	The work of D.~Omogbhe and K.~ Sadiq  were supported by the Austrian Science Fund (FWF), Project P31053-N32. The work of K.~ Sadiq  was also supported by the FWF Project F6801–N36 within the Special Research Program SFB F68 “Tomography Across the Scales”. 


\end{document}